%%%%%%%%%%%%%%%%%%%%%%%%%%%%%%%%%%%%%%%%%%
%
%
%
%%%%%%%%%%%%%%%%%%%%%%%%%%%%%%%%%%%%%%%%%%
%-----------------------------------------------------------------------
% Beginning of article.tex
%-----------------------------------------------------------------------
%
% AMS-LaTeX 1.2 sample file for book proceedings, based on amsproc.cls.
%
% Replace amsproc by the documentclass for the target series, e.g. pspum-l.
%
\documentclass{amsproc}

\usepackage{amsmath}
\usepackage{amssymb}
\usepackage{graphicx}
\DeclareGraphicsExtensions{.png,.pdf,.eps}
\usepackage[all,2cell]{xy}
\usepackage{tikz}

\setlength{\textwidth}{14cm}

%%%%%%%%%%%%%%%%%%%%%%%%%%%%%%%%%%%
%NUMBERING
%%%%%%%%%%%%%%%%%%%%%%%%%%%%%%%%%%%

\newtheorem{theorem}{Theorem}
\newtheorem{lemma}{Lemma}
\newtheorem{corollary}{Corollary}
\newtheorem{proposition}{Proposition}
\newtheorem{conjecture}{Conjecture}

\theoremstyle{definition}

\newtheorem{definition}{Definition}
\newtheorem{notation}{Notation}

\newtheorem{remark}{Remark}

\newtheorem{construction}{Construction}

%%%%%%%%%%%%%%%%%%%%%%%%%%%%%%%%%%%
%OLD NUMBERING
%%%%%%%%%%%%%%%%%%%%%%%%%%%%%%%%%%%
%\newtheorem{theorem}{Theorem}[section]
%\newtheorem{lemma}[theorem]{Lemma}
%\newtheorem{corollary}[theorem]{Corollary}
%\newtheorem{proposition}[theorem]{Proposition}
%\newtheorem{conjecture}[theorem]{Conjecture}
%\newtheorem{claim}[theorem]{Claim}
%
%
%\theoremstyle{definition}
%
%\newtheorem{defs}[theorem]{Definitions}
%\newtheorem{definition}[theorem]{Definition}
%\newtheorem{notation}[theorem]{Notation}
%\newtheorem{problem}[theorem]{Problem}
%\newtheorem{remark}[theorem]{Remark}
%\newtheorem{assumption}[theorem]{Assumption}
%\newtheorem{set}[theorem]{Setup}
%\newtheorem{example}[theorem]{Example}
%\newtheorem{question}[theorem]{Question}
%\newtheorem{construction}[theorem]{Construction}

%\numberwithin{equation}{section}

\def\C{{\mathbb C}}

\def\N{{\mathbb N}}
\def\P{{\mathbb P}}
\def\Q{{\mathbb Q}}
\def\R{{\mathbb R}}
\def\Z{{\mathbb Z}}

\def\cD{{\mathcal D}}
\def\cE{{\mathcal E}}

\def\cO{{\mathcal{O}}}

\def\cV{{\mathcal V}}

\def\Q{{\mathbb{Q}}}

\def\fg{{\mathfrak g}}
\def\fh{{\mathfrak h}}
\def\fp{{\mathfrak p}}
\def\fb{{\mathfrak b}}

\def\sl{\operatorname{SL}}
\def\so{\operatorname{SO}}
\def\sp{\operatorname{Sp}}

\def\operatorname#1{\mathop{\rm #1}\nolimits}

\def\Pic{\operatorname{Pic}}

\def\det{\operatorname{det}}

\def\sga{{\langle}}
\def\sgc{{\rangle}}

\def\NE{{\operatorname{NE}}}

\def\Nef{{\operatorname{Nef}}}
\def\Nu{{\operatorname{N_1}}}

\def\Ch{{\operatorname{Ch}}}

\def\tr{\operatorname{tr}}
\def\Ad{\operatorname{Ad}}
\def\ad{\operatorname{ad}}
\def\End{\operatorname{End}}
\def\het{\operatorname{ht}}

\DeclareMathOperator{\ch}{\mathrm{Chains}}

%    Absolute value notation

%    Blank box placeholder for figures (to avoid requiring any
%    particular graphics capabilities for printing this document).

\makeindex

\begin{document}
%\pagewiselinenumbers

\title[Rational curves, Dynkin diagrams and Fano manifolds]{Rational curves, Dynkin diagrams and Fano manifolds with nef tangent bundle}

%%    Information for first author
\author[R. Mu\~noz]{Roberto Mu\~noz}
%    Address of record for the research reported here
\address{Departamento de Matem\'atica Aplicada, ESCET, Universidad
Rey Juan Carlos, 28933-M\'os\-to\-les, Madrid, Spain}
\email{roberto.munoz@urjc.es}
\thanks{First and third author partially supported by the Spanish government project MTM2009-06964. Third author supported by National Researcher Program 2010-0020413 of NRF.
Third and fourth author partially supported by the Research in Pairs Program of CIRM.
Fourth author partially supported by JSPS KAKENHI Grant Number 24840008.}
%
%%    Information for second author
\author[G. Occhetta]{Gianluca Occhetta}
\address{Dipartimento di Matematica, Universit\`a di Trento, via
Sommarive 14 I-38123 Povo di Trento (TN), Italy}
\email{gianluca.occhetta@unitn.it}

%%    Information for third author
\author[L.E. Sol\'a Conde]{Luis E. Sol\'a Conde}
\address{%Departamento de Matem\'atica Aplicada, ESCET, Universidad
%Rey Juan Carlos, 28933-M\'ostoles, Madrid, Spain $\&$
Korea Institute for Advanced Study, 85 Hoegiro, Dongdaemun-gu, Seoul, 130--722, Korea}
\email{lesolac@gmail.com}

%%    General info
\subjclass[2010]{Primary 14M15; Secondary 14E30, 14J45}

\author[K. Watanabe]{Kiwamu Watanabe}
\address{Course of Mathematics, Programs in Mathematics, Electronics and Informatics,
Graduate School of Science and Engineering, Saitama University.
Shimo-Okubo 255, Sakura-ku Saitama-shi, 338-8570 Japan}
\email{kwatanab@rimath.saitama-u.ac.jp}

\begin{abstract} A Fano manifold $X$ with nef tangent bundle is of Flag-Type if it has the same kind of elementary contractions as a complete flag manifold. In this paper we present a method to associate a Dynkin diagram $\mathcal{D}(X)$ with any such $X$, based on the numerical properties of its contractions. We then show that $\mathcal{D}(X)$ is the Dynkin diagram of a semisimple Lie group. As an application we prove that Campana-Peternell conjecture holds when $X$ is a Flag-Type manifold whose Dynkin diagram is $A_n$, i.e. we show that $X$ is the variety of complete flags of linear subspaces in $\mathbb{P}^n$.

\end{abstract}

\maketitle

%%%%%%%%%%%%%%%%%%%%
% FT-manifolds
% Section: Introduction
%%%%%%%%%%%%%%%%%%%%

\section{Introduction}\label{sec:intro}

The problem of classifying smooth complex projective varieties with nef tangent bundles appeared in the 1980's as the natural extension of the Hartshorne-Frankel Conjecture, proven by Mori using deformation theory of rational curves in his celebrated paper \cite{Mo}.

A quick look at the known examples shows that the natural candidates have homogeneous manifolds as building blocks. To be more precise, on one hand
Demailly, Peternell and Schneider showed in \cite{DPS} that, up to an \'etale covering, smooth varieties with nef tangent bundle always admit a fibration (their Albanese morphism) over an abelian variety, whose fibers are Fano manifolds with nef tangent bundle. On the other hand, Campana and Peternell have posed the following conjecture, which is the framework of the present paper:

\begin{conjecture}[Campana-Peternell Conjecture]\label{conj:CPconj}
The only complex Fano manifolds with nef tangent bundle are rational homogeneous spaces, i.e. quotients of semisimple Lie groups by parabolic subgroups.
\end{conjecture}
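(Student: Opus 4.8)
The plan is to prove the conjecture by induction on the Picard number $\rho(X)$, using the Mori-theoretic structure of $X$ to reduce everything to Picard number one and then reconstructing the homogeneous structure from the numerical data of the contractions. Since $X$ is Fano its Albanese is trivial, so in the Demailly--Peternell--Schneider picture nefness of $T_X$ is the only positivity input available, and the entire goal is to exhibit a semisimple group acting transitively on $X$.

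First I would set up the fibration backbone. For a Fano manifold with nef tangent bundle, every contraction $\pi\colon X\to Y$ of an extremal ray is a smooth fibration, and both a fiber $F$ and the image $Y$ are again Fano manifolds with nef tangent bundle: nefness descends to the image because $T_Y$ is a quotient of $T_X$ along the differential of $\pi$, and it is inherited by the fibers because the relative tangent sequence is controlled by the nef splitting of $T_X$ along minimal rational curves. Composing elementary contractions then produces a tower of smooth fibrations whose building blocks all have strictly smaller Picard number (an elementary contraction drops $\rho$ by one on the base, while a fiber has positive Picard number strictly less than $\rho(X)$ once $\rho(X)\geq 2$). By the inductive hypothesis all fibers and bases are rational homogeneous, so the problem reduces to two tasks: the base case $\rho(X)=1$, and the passage from ``all pieces homogeneous'' to ``$X$ homogeneous''.

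For the base case I would use varieties of minimal rational tangents. Fixing a minimal rational curve family, let $\mathcal{C}_x\subset \mathbb{P}(T_xX)$ be its VMRT at a general point $x$. Nefness of $T_X$ forces the restriction of $T_X$ to each minimal rational curve to be as balanced as possible, and this should propagate to the geometry of $\mathcal{C}_x$, making it smooth and, crucially, a rational homogeneous projective subvariety. Once $\mathcal{C}_x$ is recognized as the VMRT of a generalized Grassmannian, the Cartan--Fubini type extension theorem and the reconstruction theory of Hwang--Mok identify $X$ with the corresponding $G/P$ with $P$ maximal parabolic. The reassembly step then upgrades the inductive hypothesis to global homogeneity: knowing the pieces are homogeneous is not enough, since a tower of homogeneous fibrations need not be homogeneous, so I would attach to $X$ the Dynkin diagram $\mathcal{D}(X)$ built from the numerical data of its contractions, prove that $\mathcal{D}(X)$ is the diagram of a semisimple group $G$, and use $G$ to produce an almost-homogeneous action on $X$ whose open orbit must be all of $X$.

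The main obstacle, and the genuinely open point of the program, is the homogeneity of the VMRT in the Picard-number-one step. Nefness controls $T_X$ along individual minimal rational curves, but converting this into the assertion that $\mathcal{C}_x$ is homogeneous in \emph{all} types---not only the classical or low-dimensional cases already understood---requires a uniform curvature or deformation argument controlling the second fundamental form of $\mathcal{C}_x$ across every possible minimal family. This is precisely where the existing techniques stop, and a complete proof of the conjecture hinges on closing that gap; the Flag-Type analysis carried out in this paper is the systematic way to organize the combinatorial side of the reconstruction once the Picard-number-one case becomes available.
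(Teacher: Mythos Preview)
The statement you are attempting to prove is Conjecture~\ref{conj:CPconj}, which the paper does \emph{not} prove; it is the framing open problem, and the paper establishes only partial results toward it (Theorems~\ref{thm:picn} and~\ref{thm:An}). So there is no ``paper's own proof'' to compare against, and your proposal should be read as a program, not a proof---as you yourself acknowledge when you call the Picard-number-one VMRT step ``the genuinely open point''. That gap is real and well known: the paper states explicitly in the introduction that ``proving homogeneity in the Picard number one case turned out to be a very hard problem, and no progress has been made in this direction''. Your proposal therefore does not close the conjecture.

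It is worth noting that your outline is precisely the ``bottom-up'' strategy the paper attributes to Campana and Peternell in \cite[11.2]{CP} and then deliberately sets aside. The paper instead proposes the opposite, ``top-down'' approach: rather than starting from $\rho=1$ and reassembling, one starts from FT-manifolds (those whose elementary contractions are all smooth $\P^1$-fibrations, the analogues of complete flag manifolds), proves these are homogeneous, and then shows (Conjecture~\ref{conj:CP2}) that every Fano manifold with nef tangent bundle is dominated by one. The Dynkin-diagram machinery you allude to in your last paragraph is developed in the paper for this purpose, not as a reassembly tool sitting on top of a $\rho=1$ classification.

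Finally, a technical inaccuracy: you write that ``nefness descends to the image because $T_Y$ is a quotient of $T_X$ along the differential of $\pi$''. This is not the reason; $T_Y$ is not a quotient of $T_X$, only $\pi^*T_Y$ is, and nefness of a pull-back does not in general imply nefness of the original bundle. The correct argument (see \cite[Thm.~5.2]{DPS}, \cite[Thm.~4.4]{SW}, and Proposition~\ref{prop:nef} in the paper) is more delicate. Your reassembly step is likewise not a known result: as you note, a tower of homogeneous fibrations need not be homogeneous, and the paper offers no mechanism for this beyond the FT-manifold route.
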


For brevity we will refer to Conjecture \ref{conj:CPconj} as {\it CP Conjecture}.
It has been proven to be true in several cases: for varieties of dimension three \cite{CP} and four \cite{CP2,Mk,Hw}, and  for fivefolds of Picard number bigger than one \cite{Wa2}. It holds also in other setups, for instance for smooth varieties with big and $1$-ample tangent bundle \cite{SW}.

Let us point out that in the quoted references the proofs depend on detailed classifications of the varieties satisfying the required properties, whose homogeneity is checked a posteriori.

In \cite[11.2]{CP} the authors suggested a possible approach to the problem, divided in two parts: first prove  CP Conjecture for smooth varieties with Picard number one; then prove that, given any Fano manifold with nef tangent bundle $X$ and a contraction $f:X \to Y$,
from the homogeneity of $Y$ and of the fibers of $f$ one can recover the homogeneity of $X$.
Unfortunately proving homogeneity in the Picard number one case turned out to be a very hard problem, and no progress has been made in this direction.

In this paper we propose to replace this ``bottom-up'' strategy with a ``top-down'' one, namely to start by proving  CP Conjecture for Fano manifolds of ``maximal'' Picard number and then proving that any Fano manifold with nef tangent bundle is dominated by one of such manifolds.

In order to give a reasonable notion of maximality
let us recall that given a semisimple Lie group $G$ and a parabolic subgroup $P$, taking $B$ to be a Borel subgroup containing $P$ we have a contraction $f:G/B \to G/P$, so the {\it complete flag manifold} $G/B$
dominates every $G$-homogeneous variety.

On the other direction, starting from a rational homogeneous manifold $G/P$, under the appropriate hypotheses (see Theorem \ref{thm:linesonflags}), one may define a contraction $f:G/P'\to G/P$ by considering the evaluation map of a family of minimal rational curves in $G/P$. The process stops when we get to a complete flag manifold, since $G/B$ is isomorphic to every universal family  of minimal curves contained in it.

In particular, complete flag manifolds associated with semisimple Lie groups can be recognized, among homogeneous manifolds, by the structure of their Mori contractions: all their elementary contractions are $\P^1$-bundles. This suggests considering {\it Flag-Type manifolds} (FT-manifolds for short), which we define as the Fano manifolds with nef tangent bundle whose elementary contractions are, more generally, smooth $\P^1$-fibrations.

The central idea in this paper is that for these manifolds we can face
directly the problem of constructing a semisimple group out of the nefness assumption. In fact,
from previous results of the authors (\cite{MOS,Wa}) one may prove that every FT-manifold of Picard number two is, in fact, homogeneous: this, apart of being the starting point of some inductive arguments, allows us to associate a Dynkin diagram $\cD(X)$ with every FT-manifold $X$. The Kac-Moody algebra encoded in this diagram could, in principle, be infinite dimensional, but
the main result of this paper is that this cannot happen, i.e. that we have the following:

\begin{theorem}\label{thm:picn}
The Dynkin diagram $\cD(X)$ of an FT-manifold $X$ is finite, that is, it determines a semisimple finite dimensional Lie algebra.
\end{theorem}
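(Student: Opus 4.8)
The plan is to prove the equivalent statement that the generalized Cartan matrix $A(X)=(a_{ij})$ attached to $\cD(X)$ is of \emph{finite} type. Recall its construction: the nodes are the $\rho:=\rho(X)$ elementary contractions $f_i\colon X\to X_i$, and $a_{ij}=-K_{f_i}\cdot\Gamma_j$ for minimal rational curves $\Gamma_j$ of the $j$-th ruling, so that $a_{ii}=2$ while for $i\neq j$ the bond $\{i,j\}$ — equivalently the pair $(a_{ij},a_{ji})$ — is the one of the Picard number two FT-manifold cutting out the face $\langle R_i,R_j\rangle$, homogeneous by \cite{MOS} and \cite{Wa}, hence $\mathbb{P}^1\times\mathbb{P}^1$ or the complete flag of $A_2$, $B_2$ or $G_2$; in particular $a_{ij}\le0$ and $a_{ij}a_{ji}\le3$, so every $2\times2$ principal submatrix of $A(X)$ is of finite type and $A(X)$ is symmetrizable once it is shown to have no cycle (which will come out a posteriori). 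If $\cD(X)$ is disconnected then $X$ is a product of FT-manifolds, so one may assume $\cD(X)$ connected; the goal is then that $A(X)$ is positive definite.

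I would proceed by induction on $\rho$. The case $\rho=2$ is the homogeneity result above, giving $\cD(X)\in\{A_1\times A_1,\,A_2,\,B_2,\,G_2\}$. For $\rho\ge3$ and any proper node subset $I$, the contraction of the face of $\cNE{X}$ spanned by $\{R_k:k\notin I\}$ has as general fibre an FT-manifold $X_I$ of Picard number $|I|<\rho$ whose Dynkin diagram is the full subdiagram of $\cD(X)$ on $I$ — the geometric incarnation of restricting to a Levi subdiagram, which I take from the structure theory developed before the theorem. By induction each $A(X)|_I=A(X_I)$ is of finite type; hence $A(X)$ is a connected generalized Cartan matrix, with off-diagonal products at most $3$, all of whose proper principal submatrices are of finite type. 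By Kac's classification this leaves exactly three possibilities: $A(X)$ is of finite type, of affine type, or one of the finitely many ``strictly hyperbolic'' matrices (indefinite, but with all proper principal submatrices of finite type).

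It remains to rule out the last two, and here the hypothesis that $T_X$ be \emph{nef} — so far used only through the smoothness of the $f_i$ — must genuinely intervene. Suppose $A(X)$ affine. It then has a positive integral vector $\delta=(\delta_i)$ in its kernel, so the effective $1$-cycle class $C:=\sum_i\delta_i\Gamma_i$ satisfies $-K_{f_i}\cdot C=(A(X)\delta)_i=0$ for all $i$, while $-K_X\cdot C=2\sum_i\delta_i>0$. I would realise $C$ by a connected chain $\Gamma$ of minimal rational curves through a general point (the $\cloc$ construction) and contradict the vanishings $-K_{f_i}\cdot C=0$: $T_X|_\Gamma$ is nef of positive total degree $-K_X\cdot C$, and keeping track of how this positivity is allocated among the $\rho$ fibration directions — equivalently, the fact that the sweeping family of such chains is at least of the expected dimension and dominates $X$, so that for some $i$ the class $C$ already moves ``transversally'' to $f_i$ — forces $-K_{f_i}\cdot C>0$ for at least one $i$. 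The strictly hyperbolic matrices are excluded in the same way (in those cases too there is no effective $C=\sum_i u_i\Gamma_i$ with $-K_{f_i}\cdot C>0$ for every $i$), or else by inspecting the finitely many of them, each of which after one further contraction contains a configuration already ruled out.

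Thus the set-up and the inductive reduction to ``all proper principal submatrices of finite type'' are essentially formal; the main obstacle — and the step I expect to be hardest — is the exclusion of the affine and hyperbolic Cartan matrices, because it is the only place where nefness of $T_X$, and not merely the Fano condition together with the simpliciality of $\cNE{X}$, is used. The difficulty is precisely that the purely numerical relations among $-K_X$ and the relative anticanonical classes $-K_{f_i}$ (notably $-K_X=-K_{f_i}+f_i^*(-K_{X_i})$ for each $i$) are in themselves compatible with affine data, so the missing positivity has to be produced from the deformation theory of rational curves that nefness of the tangent bundle makes available.
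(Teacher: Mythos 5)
Your scaffolding coincides with the paper's: reduce to connected $\cD(X)$ by the product decomposition, induct on the Picard number with the rank-two homogeneity theorem as base case, and translate non-finiteness into the existence of an effective $1$-cycle $C=\sum_i \delta_i\Gamma_i$, $\delta_i>0$, with $K_i\cdot C=0$ for all $i$ (this is exactly Proposition \ref{prop:CT-Cartan}). The genuine gap is the exclusion of such a cycle, which is the whole content of the theorem and which you only assert. The claimed mechanism --- ``$T_X|_\Gamma$ is nef of positive total degree, and since the chains sweep out $X$, for some $i$ the class $C$ moves transversally to $\pi_i$, forcing $-K_i\cdot C>0$'' --- has nothing behind it: the relative anticanonical classes $-K_i$ are not nef (their negative degrees on adjacent $\Gamma_j$ are precisely the off-diagonal Cartan entries), $T_i|_B$ is a \emph{sub}bundle, not a quotient, of the nef bundle $T_X|_B$, so nefness gives no lower bound on $-K_i\cdot B$, and ``not contracted by $\pi_i$'' does not force $-K_i$-positivity even on a flag manifold ($-K_i\cdot C_j=a_{ij}<0$ for $j$ adjacent to $i$). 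Note moreover that $C$ \emph{is} effective by construction, so no purely numerical bookkeeping of degrees can produce the contradiction; genuinely geometric input is needed.

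What the paper actually does at this point is the step your sketch lacks. Since $T_X$ is nef, the rational tree $\sum_i m_i\Gamma_i$ smooths to an irreducible rational curve $B$ whose class lies in the interior of $\NE(X)$ and satisfies $K_i\cdot B=0$ for all $i$; Proposition \ref{prop:try} then rules out such a $B$ via Construction \ref{con:prod} (Kempf's basic construction): one runs the iterated fiber products along the elementary $\P^1$-fibrations twice, once starting from a point of $B$ and once from its normalization $\P^1\to B$, and Lemma \ref{lem:product} --- the rank-two extension defining the pulled-back $\P^1$-bundle over $\P^1\times M$ is trivial on the $\P^1$-factor exactly because $K_{i_\ell}\cdot B=0$ --- shows that the $\P^1$-factor propagates through every step; hence $V'_{m-1}\cong\P^1\times V_{m-1}$ has dimension $\dim X$, while Lemma \ref{lem:finite} forces $\pi_{i_m}\circ f'_{m-1}$ to be generically finite onto $X_{i_m}$, of dimension $\dim X-1$, a contradiction. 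This splitting-plus-dimension-count argument is the missing idea. (Your additional concern about strictly hyperbolic Cartan matrices goes beyond the paper's route, which concludes ``finite or affine'' from Proposition \ref{prop:CT-Cartan}(1); in any case your proposed disposal of those cases is as unsubstantiated as your affine one, so it does not close the gap.)
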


This result allows us to describe chains of rational curves on an FT-manifold $X$ in terms
of a monoid constructed using the Coxeter group associated with $\cD(X)$, using this result to prove
that the dimension of $X$ is bounded above by the dimension of the complete flag manifold
associated with $\cD(X)$.
In order to conclude  CP Conjecture for FT-manifolds, one has now to show that $X$ is, indeed, isomorphic to a quotient of the semisimple Lie group $G$  determined by $\cD(X)$; we show that,
without loss of generality, $G$ can be assumed to be simple (see Corollary \ref{cor:reducible}).

As en evidence in this direction we present, in the last section, a proof that this is the case for $\cD(X)=A_n$, obtaining that:

\begin{theorem}\label{thm:An}
Let $X$ be an FT-manifold with associated Dynkin diagram $A_n$. Then $X$ is isomorphic to the manifold parametrizing complete flags of linear subspaces in $\P^n$. In particular,  CP Conjecture holds for FT-manifolds whose associated Dynkin diagram is a disjoint union of diagrams of type $A$.
\end{theorem}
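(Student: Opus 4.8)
The plan is to argue by induction on $n$. For $n=1$ an FT-manifold with diagram $A_1$ is a $\P^1$-fibration over a point, hence is $\P^1$, the flag variety of $\P^1$; for $n=2$ the homogeneity of FT-manifolds of Picard number two recalled before Theorem \ref{thm:picn} forces $X\cong\mathrm{SL}_3/B$, the flag variety of $\P^2$. So assume $n\ge 3$ and the statement for $A_m$, $m<n$. Label the nodes of $\cD(X)=A_n$ by $1,\dots,n$ with $1,n$ the two ends, let $R_1,\dots,R_n$ be the associated extremal rays of $\cNE{X}$ (they are all of them, and $\cNE{X}=\langle R_1,\dots,R_n\rangle$ is simplicial), and let $f_i\colon X\to X_i$ be the corresponding $\P^1$-fibrations. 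First I would contract the face $\langle R_1,\dots,R_{n-1}\rangle$, obtaining $\pi\colon X\to Z$ with $\rho(Z)=1$. Using the analysis of fibres of contractions of FT-manifolds that underlies Theorem \ref{thm:picn} (the description via chains of rational curves), one checks that $\pi$ is an equidimensional smooth fibration whose general fibre $F$ is again an FT-manifold with $\cD(F)$ the full subdiagram on $\{1,\dots,n-1\}$, i.e.\ of type $A_{n-1}$; by the inductive hypothesis $F$ is the flag variety of $\P^{n-1}$, and symmetrically the contraction of $\langle R_2,\dots,R_n\rangle$ has fibres the flag variety of $\P^{n-1}$ as well.

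The central step is to prove $Z\cong\P^n$. Since $\pi$ is smooth, its relative tangent sequence realizes $\pi^*T_Z$ as a quotient of the nef bundle $T_X$, so $\pi^*T_Z$, and hence $T_Z$, is nef; as $Z$ is rationally connected (being dominated by the Fano manifold $X$) of Picard number one, it is a Fano manifold with nef tangent bundle. Write $-K_X\equiv 2\sum_{i=1}^n L_i$ with $\{L_i\}$ dual to $\{R_i\}$ in $N^1(X)_{\Q}$; then $\pi^*N^1(Z)_{\Q}=\Q L_n$ and $\pi^*(-K_Z)\equiv r_Z L_n$ with $r_Z$ the Fano index of $Z$. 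Since $R_n$ is not contracted by $\pi$, the $\pi$-images of the fibres of $f_n$ form a covering family of rational curves $C_0$ on $Z$ of degree one against the ample generator, with $-K_Z\cdot C_0=r_Z$. The crux is to show $\dim Z=n$ and $r_Z=n+1$, which then forces $Z\cong\P^n$ (by Kobayashi--Ochiai, or via the family $\{C_0\}$ and the Cho--Miyaoka--Shepherd-Barron--Kebekus characterization of $\P^n$): the inequality $\dim Z\le n$ is immediate from $\dim X\le\binom{n+1}{2}$ (Theorem \ref{thm:picn}) and $\dim X=\dim Z+\dim F=\dim Z+\binom n2$, whereas the reverse inequality $\dim Z\ge n$ — equivalently the lower bound $\dim X\ge\binom{n+1}{2}$, which is \emph{not} supplied by Theorem \ref{thm:picn} — and the value of $r_Z$ are to be extracted from the combinatorics of chains of rational curves.

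It then remains to reconstruct $X$ from $\pi\colon X\to\P^n$ with fibre the flag variety $\mathcal{F}$ of $\P^{n-1}$. As $\Aut(\mathcal{F})=\mathrm{PGL}_n$ and $\operatorname{Br}(\P^n)=0$, this smooth fibration is the relative complete flag bundle $\operatorname{Fl}(\cE)$ of a rank-$n$ vector bundle $\cE$ on $\P^n$, well defined up to twist by a line bundle. The $n-1$ relative contractions of $\operatorname{Fl}(\cE)$ are automatically $\P^1$-fibrations, so the entire content of $X$ being an FT-manifold lies in the one remaining, horizontal, extremal contraction being a $\P^1$-fibration as well; this forces $\cE$ to be uniform (restrict the flag bundle over a line of $\P^n$ and use the nefness of $T_X$), and by the classification of uniform bundles of rank $\le n$ on $\P^n$, together with the facts that $X$ is Fano and that $\cD(X)$ is connected (which rules out line-bundle summands of $\cE$), one gets that $\cE$ is, up to twist, $T_{\P^n}$ or $\Omega_{\P^n}$. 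In either case $\operatorname{Fl}(\cE)$ is the flag variety of $\P^n$, so $X$ is as claimed; the last assertion, for $\cD(X)$ a disjoint union of type-$A$ diagrams, then follows by applying this factor by factor together with Corollary \ref{cor:reducible}.

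The main obstacle is the identification of the base $Z$, and specifically pinning down $\dim Z=n$ and the Fano index $r_Z=n+1$: the only a priori numerical input is the \emph{upper} bound on $\dim X$, so one genuinely has to feed in the monoid/chains machinery to obtain a matching lower bound. A second, more technical difficulty is the rigidity step, i.e.\ showing that among all complete flag bundles over $\P^n$ only those coming from $\cE$ a twist of the (co)tangent bundle are FT-manifolds.
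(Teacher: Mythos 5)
Your proposal hinges on first identifying the Picard-number-one target $Z=X^{\{n\}}$ with $\P^n$, and you correctly isolate the crux: you need $\dim Z\ge n$ (equivalently $\dim X\ge \binom{n+1}{2}$) and the index $r_Z=n+1$, and you propose to extract these ``from the combinatorics of chains of rational curves.'' This is a genuine gap, and the chains machinery of the paper cannot fill it: the monoid $\ch(X)$ is only known to be a \emph{quotient} of the Coxeter monoid $W'(X)$, so Corollary \ref{cor:expecteddim} gives exclusively the \emph{upper} bound $\dim X\le\dim G/B$; nothing in that construction excludes extra relations that would shorten the longest word, i.e.\ no lower bound on $\dim X$ or on $\dim Z$ is available from it. The index alone does not rescue the argument either: Proposition \ref{prop:expint} (usable by induction since $A_{n-1}$ subdiagrams are settled) does give a covering family of rational curves on $Z$ of anticanonical degree $n+1$, but Cho--Miyaoka--Shepherd-Barron/Kebekus or Kobayashi--Ochiai only force $Z\cong\P^n$ once you already know $\dim Z\le n$ \emph{and} $\deg\ge\dim Z+1$, so without the missing lower bound on $\dim Z$ the characterization does not apply. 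So the central step of your plan is not proved, and it is precisely the step the paper is organized to avoid.

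The paper's route is different exactly here: instead of descending to the Picard-number-one base, it stops at the Picard-number-two quotient $X^{\{1,n\}}$, which by induction carries two $\P^{n-1}$-fibration structures onto $X^1$ and $X^n$. Corollary \ref{cor:expecteddim} ($\dim X^{\{1,n\}}\le 2n-1$) yields the dichotomy $\dim X^1+\dim X^n\in\{\dim X^{\{1,n\}},\,\dim X^{\{1,n\}}+1\}$; the first case would force $X^1\cong\P^{n-1}$, contradicting the anticanonical degree $n+1$ supplied by Proposition \ref{prop:expint}; in the second case \cite[Theorem~2]{OW} (manifolds with two projective-bundle structures) identifies $X^{\{1,n\}}\cong\P(T_{\P^n})$ --- and $X^n\cong\P^n$ comes out as a consequence rather than being an input. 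The paper then rebuilds $X$ from $X^{\{1,n\}}$ step by step via families of lines (Theorem \ref{thm:linesonflags} of Landsberg--Manivel together with Lau's theorem, Propositions \ref{prop:onestep} and \ref{prop:recurs}), not via vector bundles. If you want to keep your flag-bundle reconstruction over $\P^n$, note two further loose ends there: $\Aut$ of the complete flag variety of $\P^{n-1}$ is $\mathrm{PGL}_n$ extended by the outer (duality) involution, so you must first argue that the monodromy preserves the globally defined extremal rays before reducing the structure group and invoking $\operatorname{Br}(\P^n)=0$; and ruling out line-bundle summands of $\cE$ needs an actual computation of the resulting intersection numbers (connectedness of $\cD(X)$ by itself only excludes the trivial split case), since $\mathrm{Fl}(\cL\oplus\cE')$ is not a product in general.
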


The strategy we use to prove Theorem \ref{thm:An} is the following: we show that the target of a particular contraction of $X$ is the expected one and then we obtain the result by an inductive process.
We also show that a similar result could be obtained for every other FT-manifold with finite Dynkin diagram,
provided that one can show that the target of a particular contraction is the expected one (see Proposition \ref{prop:recurs}). However we believe that a more general approach, which avoids a lot of case by case arguments, should be possible: this will be the goal of a forthcoming paper.

Let us finally note that, up to the complete classification of FT-manifolds,   CP Conjecture could be obtained from the solution of the following problem:
\begin{conjecture}\label{conj:CP2}
{\it Let $Y$ be a Fano manifold with nef tangent bundle which is not a product of positive dimensional varieties. Then there exists an FT-manifold $X$ with connected Dynkin diagram $\cD(X)$ dominating $Y$.}
\end{conjecture}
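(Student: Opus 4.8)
The plan is to make the ``top-down'' strategy of the Introduction effective through an iterative construction of universal families of minimal rational curves. Set $Y_0=Y$. At the $i$-th step, assuming $Y_i$ is Fano with nef tangent bundle but not an FT-manifold, choose an elementary contraction of $Y_i$ which is not a smooth $\P^1$-fibration, take an associated unsplit minimal family of rational curves $\cM_i$, and form its universal family $\pi_i\colon \cU_i\to\cM_i$ together with the evaluation $e_i\colon\cU_i\to Y_i$. Since minimal curves on a manifold with nef tangent bundle are free and standard, $\cM_i$ and $\cU_i$ are smooth, $\pi_i$ is a $\P^1$-bundle, and $e_i$ is a fiber-type, hence Mori, contraction. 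Setting $Y_{i+1}=\cU_i$ produces a tower $X=Y_N\to\cdots\to Y_1\to Y_0=Y$ whose composite $g\colon X\to Y$ is a surjective morphism; thus, once the construction reaches an FT-manifold $X$, it dominates $Y$.

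The heart of the argument, and the step I expect to be the main obstacle, is to show that $\cU_i$ is again Fano with nef tangent bundle. The natural tool is the interplay of the relative (co)tangent sequences of $\pi_i$ and $e_i$: nefness of $T_{Y_i}$ together with the positivity of the universal family of minimal curves should control $T_{\cU_i}$ as an extension, and the Fano condition should follow from the relative anticanonical positivity of $\pi_i$ and of $e_i$. Propagating nefness through these extensions in full generality is exactly what is currently missing; it is the technical core on which the whole inductive scheme rests.

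A second point to settle is termination at an FT-manifold. I would attach to each $Y_i$ the multiset of fiber dimensions of those elementary contractions that are not $\P^1$-fibrations, and argue that the universal family construction replaces one such contraction by genuine $\P^1$-bundle structure, namely the factor $\pi_i$, while strictly decreasing this complexity in a well-founded order. Combined with the fact that every FT-manifold of Picard number two is homogeneous, and with the dimension bound for FT-manifolds furnished by Theorem \ref{thm:picn}, this should force the tower to stop after finitely many steps at a manifold $X$ all of whose elementary contractions are smooth $\P^1$-fibrations, that is, an FT-manifold.

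Finally, connectedness of $\cD(X)$ is forced by the hypothesis that $Y$ is not a product. Indeed, if $\cD(X)$ were disconnected, then by the structure theory behind Theorem \ref{thm:picn} and Corollary \ref{cor:reducible} the semisimple group attached to $X$ would split nontrivially and $X$ would decompose as a product $X_1\times X_2$ of positive-dimensional FT-manifolds. Since $\cNE{X_1\times X_2}=\cNE{X_1}\times\cNE{X_2}$, the fiber-type Mori contraction $g\colon X\to Y$ would split as $g_1\times g_2$, so that $Y=g_1(X_1)\times g_2(X_2)$ would be a product of positive-dimensional varieties, a contradiction. Hence $\cD(X)$ is connected, and $X$ is the desired dominating FT-manifold.
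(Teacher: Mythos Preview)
The statement you are attempting to prove is Conjecture~\ref{conj:CP2}, which the paper explicitly leaves \emph{open}; there is no proof in the paper to compare against. The paper offers only partial evidence, namely Proposition~\ref{prop:maxevid}, which goes in the opposite direction (any Fano manifold with nef tangent bundle dominating an FT-manifold is a product with that FT-manifold as a factor). Your proposal is therefore a strategy for an open problem, not a proof, and you yourself flag the decisive gap: there is no known way to show that the universal family $\cU_i$ is again Fano with nef tangent bundle. Nefness is not closed under extensions, so the relative tangent sequences of $\pi_i$ and $e_i$ do not by themselves control $T_{\cU_i}$; this is exactly the obstruction that has kept the conjecture open.

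Two further gaps deserve mention. Your termination argument is incomplete: passing from $Y_i$ to $\cU_i$ \emph{increases} the Picard number, so $\cU_i$ has more elementary contractions than $Y_i$; you trade one bad contraction for the $\P^1$-bundle $\pi_i$, but you have said nothing about the new elementary contractions of $\cU_i$ that were not visible on $Y_i$, and your complexity measure does not bound them. And in your connectedness step, if $X\cong X_1\times X_2$ then indeed $g=g_1\times g_2$, but one of the images $g_j(X_j)$ may be a single point, in which case $Y$ is simply a contraction of the other factor and no contradiction arises; at best this lets you replace $X$ by that factor, which requires an additional inductive argument you have not supplied.
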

We discuss Conjecture \ref{conj:CP2} in Section \ref{ssec:init}, where we prove, as an evidence in this direction,  that the only Fano manifolds with nef tangent bundle dominating an FT-manifold $X$ are cartesian products with $X$ as a factor.

\noindent{\bf Acknowledgements:} This project was conceived when the third and fourth author enjoyed a grant of the ``Research in pairs'' program of the Fondazione Bruno Kessler, at the Centro Internazionale per la Ricerca Matematica (Trento). We would like to express our gratitude to this institution for its support and hospitality. We would also like to thank J. Wi\'sniewski for his interest in our project and his useful comments.

%%%%%%%%%%%%%%%%%%%%
% FT-manifolds
% Section: Ratl homog spaces
%%%%%%%%%%%%%%%%%%%%

\section{Preliminaries}\label{sec:nothom}

Throughout this paper all the algebraic varieties and morphisms will be defined over the complex numbers.

Since the general philosophy of this paper is that, in the appropriate setup, one may recover the rational homogeneous structure of a manifold out of certain families of rational curves contained in them, we will start by introducing some basic facts and notation regarding Mori theory and rational homogeneous spaces. We refer the interested reader to \cite{De} and \cite{H} for an account on each of this two matters.

\subsection{Contractions and rational curves on algebraic varieties}\label{ssec:mori}

Given any normal projective algebraic variety $X$ we will denote by $N_1(X)$ the vector space of $1$-cycles in $X$ with real coefficients, modulo numerical equivalence. Its dual vector space may be identified, via intersection theory, with the vector space $N^1(X)$ of real combinations of Cartier divisors in $X$ modulo numerical equivalence. The dimension of these vector spaces, that we denote by $\rho_X$, is usually called the {\it Picard number} of $X$. Note that, given a morphism $f:X\to Y$ between normal projective varieties, the pull-back of line bundles defines a linear map $f^*: N^1(Y)\to N^1(X)$, which is dual to the push-forward map of $1$-cycles.

Inside of $N_1(X)$ and $N^1(X)$ there are two dual convex cones that play a central role in Mori theory. On one hand the {\it Mori cone} of $X$ is defined as the closure $\overline{\NE}(X)$ of the convex cone generated by effective $1$-cycles. Its dual is, by  Kleiman's Ampleness Criterion, the closure of the cone generated by ample divisors, i.e. the cone of nef classes, or {\it nef cone} of $X$, $\Nef(X)$. Let us remark that, in the case of Fano manifolds, these two cones have a very special geometry:

\begin{theorem}[Cone Theorem for Fano manifolds]\label{lem:fanocone}
Let $X$ be a Fano manifold. Then $\overline{\NE}(X)=\NE(X)$ is rational polyhedral, i.e. there exist irreducible rational curves $C_1,\dots, C_m$ such that $\NE(X)$ is the closed convex cone generated by the classes $[C_i]$ of the curves $C_i$.
\end{theorem}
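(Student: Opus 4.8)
The plan is to derive the statement from the general form of Mori's Cone Theorem together with the Fano hypothesis that $-K_X$ is ample. First I would recall that for an arbitrary smooth projective variety $X$ the Cone Theorem yields a decomposition
\[
\overline{\NE}(X)=\overline{\NE}(X)_{K_X\ge 0}+\sum_{i\in I}\R_{\ge 0}[C_i],
\]
where each $C_i$ is a rational curve with $0<-K_X\cdot C_i\le \dim X+1$, each ray $\R_{\ge 0}[C_i]$ is extremal, and the collection $\{\R_{\ge 0}[C_i]\}_{i\in I}$ is locally discrete in the open half space $\{K_X<0\}\subset N_1(X)$. This is the substantial input, and proving it from scratch --- via Mori's bend-and-break technique to produce rational curves on the $K_X$-negative faces and to bound their anticanonical degree --- is by far the hardest part; here I would simply quote it.

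Next I would exploit that $X$ is Fano. Since $-K_X$ is ample, Kleiman's Ampleness Criterion gives that $-K_X$ is strictly positive on $\overline{\NE}(X)\setminus\{0\}$. Hence $\overline{\NE}(X)_{K_X\ge 0}=\{0\}$, and $\overline{\NE}(X)$ is a strictly convex closed cone lying, away from the origin, inside the open half space $\{K_X<0\}$. In particular no infinite subfamily of the rays $\R_{\ge 0}[C_i]$ can accumulate: a limit of such rays would lie on the hyperplane $\{K_X=0\}$, which meets $\overline{\NE}(X)$ only at $0$. So the family of $K_X$-negative extremal rays is globally locally discrete on $\overline{\NE}(X)\setminus\{0\}$.

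Finally I would run the standard slicing argument. Set $H=\{\alpha\in N_1(X):-K_X\cdot\alpha=1\}$; then $K:=\overline{\NE}(X)\cap H$ is convex and, by the closedness of $\overline{\NE}(X)$ and the positivity of $-K_X$ on $\overline{\NE}(X)\setminus\{0\}$, compact, with extreme points precisely the intersections of $H$ with the extremal rays of $\overline{\NE}(X)$. By the previous step these extreme points form a discrete subset of the compact set $K$, hence a finite set, say the points cut out by $C_1,\dots,C_m$. A compact convex set with finitely many extreme points is a polytope, so $K$ is a polytope and $\overline{\NE}(X)=\bigcup_{t\ge 0}tK=\sum_{j=1}^m\R_{\ge 0}[C_j]$ is rational polyhedral, generated by the classes of finitely many irreducible rational curves. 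Since a finitely generated cone is closed, this also shows $\overline{\NE}(X)=\NE(X)$. The only genuinely difficult ingredient is the general Cone Theorem quoted at the start; everything after it is elementary convex geometry together with Kleiman's criterion.
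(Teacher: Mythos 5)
Your argument is correct, but note that the paper does not prove this statement at all: Theorem \ref{lem:fanocone} is quoted in the Preliminaries as a classical result (Mori's Cone Theorem specialized to Fano manifolds), so there is no internal proof to compare with. What you write is the standard textbook derivation: quote the general Cone Theorem (decomposition of $\overline{\NE}(X)$ into its $K_X$-nonnegative part plus $K_X$-negative extremal rays spanned by rational curves of anticanonical degree at most $\dim X+1$, locally discrete in the half-space $\{K_X<0\}$), use Kleiman's criterion to kill the $K_X\ge 0$ part since $-K_X$ is ample, and then run the compact-slice argument: the slice $\{-K_X=1\}\cap\overline{\NE}(X)$ is compact and lies entirely in $\{K_X<0\}$, so local discreteness of the rays forces finiteness, and Krein--Milman plus finiteness of extreme points gives a polytope, hence a rational polyhedral cone which is closed, so $\NE(X)=\overline{\NE}(X)$. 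Two small remarks: the step where you identify the extremal rays of the cone with rays of the form $\R_{\ge 0}[C_i]$ is exactly what the quoted Cone Theorem supplies, so make that dependence explicit; and an alternative, slightly more self-contained finiteness argument avoids the local-discreteness clause altogether, using instead that the classes $[C_i]$ are integral and satisfy $0<-K_X\cdot C_i\le\dim X+1$, hence lie in the intersection of a lattice with a compact region of $N_1(X)$, which is finite. Either way the heavy lifting is in the general Cone Theorem, which both you and the paper take as given.
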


A surjective morphism with connected fibers $f: X \to Y$ between projective
normal varieties is called a {\it contraction} of $X$. A contraction $f$ is said to be of {\it fiber type} if it is not birational, that is if $\dim(X)>\dim(Y)$. Given a contraction $f$, the kernel of $f_*$ is a vector subspace of $N_1(X)$, meeting $\overline{\NE}(X)$ along a extremal face $\sigma$ that determines the contraction $f$. A contraction $f$ is called {\it elementary} if the corresponding face $\sigma$ is one dimensional, i.e. if $\sigma$ is an extremal ray. In the case of Fano manifolds, the relation between extremal rays of $\NE(X)$ and elementary contractions of $X$ is one to one, more concretely:

\begin{theorem}[Contraction Theorem for Fano manifolds]\label{lem:fanocont}
Let $X$ be a Fano manifold. Then for every extremal ray $R\subset\NE(X)$ there exists an elementary contraction $\varphi_R:X\to Y$ satisfying that, for every irreducible curve $C\subset X$, ${\varphi_R}_*([C])=0$ if and only if $\varphi_R(C)$ is a point.
\end{theorem}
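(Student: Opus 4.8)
The statement is the Cone--Contraction package of the Minimal Model Program specialized to the Fano case, so the plan is to deduce it from Theorem \ref{lem:fanocone} together with the Base Point Free Theorem of Kawamata--Shokurov. Since $X$ is Fano, $-K_X$ is ample, and in particular every extremal ray $R\subset\NE(X)$ is $K_X$-negative. By the rational polyhedrality asserted in Theorem \ref{lem:fanocone}, $R$ is an isolated extremal ray of the closed cone $\NE(X)$; hence there is a nef Cartier divisor $D_R$ on $X$ with $D_R^{\perp}\cap\NE(X)=R$. Concretely, $D_R\cdot C\ge 0$ for every irreducible curve $C\subset X$, with equality precisely when $[C]\in R$.

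Next I would verify the numerical hypothesis of the Base Point Free Theorem. As $-K_X$ is ample and $D_R$ is nef, $aD_R-K_X$ is ample for all $a\ge 0$; therefore $|mD_R|$ is base point free for every sufficiently divisible $m>0$. Fix such an $m$ and let $\varphi_R\colon X\to Y$ be the morphism attached to $|mD_R|$ (replacing $Y$ by the Stein factorization if necessary), so that $\varphi_R$ has connected fibers and $Y$ is normal with $(\varphi_R)_*\cO_X=\cO_Y$; this is then the required contraction. By construction an irreducible curve $C$ is sent to a point by $\varphi_R$ if and only if $mD_R\cdot C=0$, i.e. if and only if $[C]\in R$, which is exactly the asserted characterization of contracted curves; in particular the face of $\NE(X)$ contracted by $\varphi_R$ is the ray $R$ itself, so $\varphi_R$ is elementary. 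Finally, since a contraction of $X$ is determined by the face of $\NE(X)$ it kills, $\varphi_R$ is independent, up to isomorphism, of the auxiliary choices of $D_R$ and $m$.

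The only genuinely non-formal ingredient --- everything else being linear algebra on $N_1(X)$ --- is the Base Point Free Theorem, which I would simply invoke; its proof relies on Kawamata--Viehweg vanishing and a Noetherian induction on the base locus, and in the Fano setting one could alternatively feed Mori's bend-and-break into the same machinery to exhibit rational curves sweeping out the fibers of $\varphi_R$. I expect this to be the main obstacle only in the sense that it is the one step that cannot be carried out by hand; the remainder of the argument is bookkeeping with cones and linear systems.
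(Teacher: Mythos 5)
Your proposal is correct: it is the standard Kawamata--Shokurov proof of the contraction theorem in the Fano case --- choose a rational nef supporting divisor $D_R$ with $D_R^{\perp}\cap\NE(X)=R$ (available by the rational polyhedrality of Theorem~\ref{lem:fanocone}), note that $aD_R-K_X$ is ample, invoke the Base Point Free Theorem, and take the (Stein-factorized) morphism given by $|mD_R|$, whose contracted curves are exactly those with $D_R\cdot C=0$, i.e.\ those with class in $R$. The paper itself gives no proof of this statement; it is quoted as standard background from Mori theory (the reader is referred to \cite{De}), and your argument is precisely the classical one found there, so there is nothing substantive to compare beyond noting the agreement.
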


\subsection{Semisimple Lie groups and algebras}\label{ssec:lie}

Along this section $G$ will denote a semisimple Lie group, with associated Lie algebra $\fg$.

\subsubsection{Cartan decomposition and root systems}
Given a {\it Cartan subalgebra} (that is, a maximal abelian subalgebra) $\fh\subset\fg$, its action on $\fg$ defines an eigenspace decomposition of $\fg$, called {\it Cartan decomposition} of $\fg$:
$$
\fg=\fh\oplus\bigoplus_{\alpha\in\fh^\vee\setminus\{0\}} \fg_\alpha.
$$
The spaces (so-called {\it root spaces}) $\fg_\alpha$ are defined by
$$\fg_\alpha=\left\{g\in\fg\,|\,[h,g]=\alpha(h)g,\mbox{ for all }h\in\fh\right\},$$
and the elements $\alpha\in\fh^\vee\setminus\{0\}$ for which $\fg_\alpha\neq 0$ are called {\it roots} of $\fg$; the finite set of these elements is called the {\it root system} of $\fg$ and will be denoted by $\Phi$. Moreover, one may prove that the root spaces are one-dimensional, that $\alpha\in\Phi$ iff $-\alpha\in\Phi$, and that $[\fg_\alpha,\fg_\beta]=\fg_{\alpha+\beta}$ iff $0\neq\alpha+\beta\in\Phi$.

\subsubsection{The Weyl group of $\fg$}
The {\it Killing form} $\kappa(X,Y):=\tr(\ad_X\circ\ad_Y)$ defines a nondegenerate
bilinear form on $\fh$, whose restriction to the real vector space $E$ generated by $\Phi$ is positive definite.
It is then well known that the root system $\Phi$ is invariant by the {\it reflections with respect to roots}, which are the isometries of $(E,\kappa)$ defined by:
$$
\sigma_\alpha(x)=x-\sga x,\alpha\sgc\alpha,\quad\mbox{where}\quad\sga x,\alpha\sgc:=2\dfrac{\kappa(x,\alpha)}{\kappa(\alpha,\alpha)}.
$$
The group $W\subset\so(E,\kappa)$ generated by the $\sigma_\alpha$'s is called the {\it Weyl group of $\fg$}.

\subsubsection{The Cartan matrix of $\fg$}\label{ssec:cartan}

Set $n:=\dim_\C(\fh)$ and $D:=\{1,2,\dots,n\}$.
A basis of $\fh^\vee$ formed by elements of $\Phi$ such that the coordinates of every element of $\Phi$ are integers, all of them nonnegative or all of them nonpositive, is called a {\it base} of $\Phi$ and its elements are called {\it simple roots}. It is known that such a set always exists, and we will choose one of them, denoting it by $\Delta=\left\{\alpha_i\right\}_{i\in D}$. It provides a decomposition of the set of roots according to their sign $\Phi=\Phi^+\cup\Phi^-$, where $\Phi^-=-\Phi^+$.  Moreover, every positive root can be obtained from simple roots by means of reflections $\sigma_{\alpha_i}$. It is then clear that the matrix $M$ whose coefficients are $\sga\alpha_i,\alpha_j\sgc$, the so-called  {\it Cartan matrix of $\fg$}, encodes the necessary information to reconstruct $\fg$ from the set of simple roots $\Delta$.

\subsubsection{Dynkin diagrams and the classification of semisimple Lie groups}\label{ssec:dynkinhomo}

The coefficients of the Cartan matrix $M$ of $\fg$ are subject to certain arithmetic restrictions:
\begin{itemize}
\item $\sga \alpha_i,\alpha_i\sgc=2$ for all $i$,
\item $\sga \alpha_i,\alpha_j\sgc=0$ if and only if $\sga \alpha_j,\alpha_i\sgc=0$, and
\item if $\sga \alpha_i,\alpha_j\sgc\neq 0$, $i \neq j$, then $\sga \alpha_i,\alpha_j\sgc\in\Z^-$ and $ \sga \alpha_i,\alpha_j\sgc\sga\alpha_j,\alpha_i\sgc=1,2$ or $3$.
\end{itemize}
These properties allow us to represent $M$ by a {\it Dynkin diagram}, that we denote by $\cD$, that consists of a graph whose set of nodes is $D$ and where the nodes $i$ and $j$ are joined by $\sga \alpha_j,\alpha_i\sgc\sga \alpha_i,\alpha_j\sgc$ edges. When two nodes $i$ and $j$ are joined by a double or triple edge, we add to it an arrow, pointing to $i$ if $\sga \alpha_i,\alpha_j\sgc>\sga \alpha_j,\alpha_i\sgc$.

The classification theorem of root systems tells us that every finite root system is a disjoint union of mutually orthogonal irreducible root subsystems, each of them corresponding to one of the {\it connected finite Dynkin diagrams} $A_n$, $B_n$, $C_n$, $D_n$ ($n\in\N$), $E_6$, $E_7$, $E_8$, $F_4$, $G_2$.

From this result it follows that every semisimple Lie group $G$ is a product of {\it simple} Lie groups, each of them corresponding to one of the Dynkin diagrams above. Recall that under this correspondence, the classical Lie groups $\sl_{n+1}$, $\so_{2n+1}$, $\sp_{2n}$ and $\so_{2n}$ correspond to the diagrams $A_n$, $B_n$, $C_n$ and $D_n$, respectively.

\subsection{Rational homogeneous spaces}\label{ssec:rathom}

A rational homogeneous space is a projective smooth manifold isomorphic to a quotient $G/P$ where $G$ is a semisimple group and $P$ is a subgroup of $G$. Let us start by recalling how these quotients may be described in terms of root systems.

\subsubsection{Marked Dynkin diagrams}

The subgroups of $G$ for which $G/P$ is a projective manifold are called {\it parabolic}. The most important thing to note here is that every parabolic subgroup is determined by a set of simple roots of $G$ in the following way: given a subset $I\subset D$, let $\Phi^+(I)$ be the subset of $\Phi^+$ generated by the simple roots in $D\setminus I$; then the subspace
\begin{equation}
\fp(I):=\fh\oplus\bigoplus_{\alpha\in\Phi^+} \fg_{-\alpha}\oplus\bigoplus_{\alpha\in\Phi^+(I)} \fg_\alpha
\label{eq:cartanparab}
\end{equation}
is a {\it parabolic subalgebra} of $\fg$, determining a parabolic subgroup  $P(I)\subset G$. Conversely, every parabolic subgroup is constructed in this way. In the most common notation, we represent $G/P(I)$ by marking on the Dynkin diagram $\cD$ of $G$ the nodes corresponding to $I$. When $G$ is clear from the context, we will denote by $F(I)$ the projective manifold $G/P(I)$.

%%%%%%%%%%%%%%%%%%%%%%%%%%%%%%%%%%%%%
\subsubsection{Contractions of rational homogeneous spaces}\label{ssection:contraction}

From the above construction  it immediately follows that given two subsets $I\subset J\subset D$, there exists a  proper surjective morphism $p^{J,I}:F(J)\to F(I)$.
We will denote by $T^{J,I}$ the relative tangent bundle of $p^{J,I}$, and by $K^{J,I}$ its relative canonical divisor. The following result describes the Mori cone of $F(I)$:

\begin{proposition}\label{prop:simp}
Every rational homogeneous manifold $F(I)=G/P(I)$ is a Fano manifold, whose contractions are all of the form $p^{J,I}$, $I\subset J\subset D$. In particular, the Picard number of $F(I)$ is $\sharp(I)$ and $\overline{\NE}(F(I))\subset N_1(F(I))$ is simplicial. Moreover, the fibers of a contraction $p^{J,I}$ are rational homogeneous spaces, determined by the marked Dynkin diagram obtained from $\cD$ by removing the nodes in $I$ and marking $J\setminus I$.

\end{proposition}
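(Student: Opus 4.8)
The plan is to derive everything from the Lie‑theoretic dictionary of the previous subsections together with the Cone and Contraction Theorems \ref{lem:fanocone} and \ref{lem:fanocont}; the single point where the homogeneity of $F(I)$ is genuinely used will be the remark that every contraction of $F(I)$ is $G$‑equivariant. Since the two cited theorems require $F(I)$ to be Fano, I would begin by checking this. The tangent bundle of $F(I)=G/P(I)$ is a quotient of $\fg\otimes\cO_{F(I)}$, because $G$ acts transitively, hence it is globally generated; by the classical description of the Picard group of $G/P$ (see \cite{H}), $\Pic(F(I))$ is freely generated by $\sharp(I)$ globally generated line bundles $\{L_i\}_{i\in I}$, so $\rho_{F(I)}=\sharp(I)$; and from \eqref{eq:cartanparab} one reads off that $-K_{F(I)}=\det T_{F(I)}$ is the class of $\sum_{\alpha\in\Phi^+\setminus\Phi^+(I)}\alpha$, a strictly positive combination of the $L_i$. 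As the maps $q_i\colon F(I)\to F(\{i\})$ realize $F(I)$ as a closed subvariety of the product $\prod_{i\in I}F(\{i\})$ of Picard‑rank‑one homogeneous spaces, with $L_i$ pulling back an ample class from the $i$‑th factor, $-K_{F(I)}$ is ample, so $F(I)$ is Fano.

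Next, and this is the heart of the matter, I would observe that the connected group $G$ acts trivially on $N_1(F(I))$, hence fixes every extremal face of $\overline{\NE}(F(I))$. Thus, if $f\colon F(I)\to Y$ is the contraction of a face $\sigma$ and $g\in G$, then $f\circ g$ is contracted by the same $\sigma$, so $g$ permutes the fibres of $f$; this equips $Y$ with a $G$‑action for which $f$ is equivariant, and, $G$ being transitive on $F(I)$, also on $Y$. Hence $Y\iso G/Q$ for a subgroup $Q\supseteq P(I)$, necessarily parabolic since $Y$ is projective. Because the parabolic subgroups containing $P(I)$ are exactly the $P(I')$ with $I'\subseteq I$ — equivalently $\Phi^+(I)\subseteq\Phi^+(I')$, by \eqref{eq:cartanparab} — every contraction of $F(I)$ is one of the natural projections onto $F(I')$, $I'\subseteq I$ (these are the maps written $p^{J,I}$ in the statement, after relabelling the two index sets).

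It remains to pin down the cone. For each $i\in I$ the maximal parabolic $P(I\setminus\{i\})$ gives a contraction $p^{I,I\setminus\{i\}}$ whose target has Picard number $\sharp(I)-1=\rho_{F(I)}-1$, hence it is elementary, and distinct $i$'s give distinct contractions, so distinct extremal rays $R_i$. Conversely, by the previous paragraph any extremal ray is contracted by some $p^{I,I'}$, which being elementary forces $I'=I\setminus\{i\}$; therefore $R_1,\dots,R_{\sharp(I)}$ are all the extremal rays, and since $\rho_{F(I)}=\sharp(I)$ they are linearly independent, so $\overline{\NE}(F(I))=\sum_{i\in I}R_i$ is simplicial. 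Every face is $\sigma_S=\sum_{i\in S}R_i$ for some $S\subseteq I$; as $P(I\setminus S)\supseteq P(I\setminus\{i\})$ for each $i\in S$, the contraction $p^{I,I\setminus S}$ kills $\sigma_S$, and since $\dim\ker\big((p^{I,I\setminus S})_*\big)=\rho_{F(I)}-\rho_{F(I\setminus S)}=\sharp(S)=\dim\sigma_S$, it is precisely the contraction of $\sigma_S$. Hence the contractions of $F(I)$ are exactly the $p^{I,I'}$, $I'\subseteq I$.

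Finally, the fibre of $p^{I,I'}$ through the base point is $P(I')/P(I)$; choosing a Levi factor $L(I')$ of $P(I')$, its semisimple part has root system the subsystem of $\Phi$ spanned by $D\setminus I'$, inside which $P(I)$ cuts out the parabolic marked by the nodes of $I\setminus I'$, so $P(I')/P(I)$ is the rational homogeneous space obtained from $\cD$ by deleting the nodes of $I'$ and marking those of $I\setminus I'$, as asserted. I do not expect a serious obstacle: the delicate point is making the equivariance argument of the second paragraph precise (it is the only step using homogeneity), while the first paragraph rests on standard but non‑trivial facts about $\Pic(G/P)$ and $-K_{G/P}$ that must be quoted carefully from the structure theory of semisimple groups.
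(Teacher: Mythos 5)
Your proposal cannot be compared line\-/by\-/line with the paper, because the paper gives no proof of Proposition \ref{prop:simp}: it is stated in the preliminaries as standard background on rational homogeneous spaces (the reader is referred to \cite{H} at the start of Section \ref{sec:nothom}). Judged on its own, your argument is the standard one and is essentially correct: ampleness of $-K_{F(I)}$ from the weight $\sum_{\alpha\in\Phi^+\setminus\Phi^+(I)}\alpha$; triviality of the $G$-action on $N_1(F(I))$, hence $G$-equivariance of every contraction and identification of its target with $G/Q$ for a parabolic $Q\supseteq P(I)$; the classification of such parabolics as the $P(I')$, $I'\subseteq I$; a count of extremal rays giving simpliciality of $\overline{\NE}(F(I))$; and the Levi-factor description of the fibres. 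You also correctly resolved the paper's slightly loose indexing (in the paper's notation $p^{J,I}$ maps $F(J)\to F(I)$, so the contractions of $F(I)$ are really the maps $p^{I,I'}$ with $I'\subseteq I$, i.e.\ one must read the statement with the two index sets exchanged). Two small points deserve an explicit word or citation rather than being asserted: that $\sga\sum_{\alpha\in\Phi^+\setminus\Phi^+(I)}\alpha,\alpha_i\sgc>0$ for every $i\in I$ (this is what makes $-K_{F(I)}$ a strictly positive combination of the generators $L_i$), and that distinct $i\in I$ give distinct extremal rays, which follows from $P(I\setminus\{i\})\cap P(I\setminus\{j\})=P(I)$ for $i\neq j$, so that the fibre of $p^{I,I\setminus\{i\}}$ is not contracted by $p^{I,I\setminus\{j\}}$. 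With those remarks, and the rigidity argument you already flag as the point needing care in making the induced $G$-action on the target algebraic, the proof is complete and is exactly the kind of argument the paper implicitly relies on.
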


\subsubsection{Complete flag manifolds}\label{ssection:flagmanifolds}

The smallest parabolic subgroup (up to the choice of a Cartan subgroup and a set of simple roots)  $B:=P(D)$ receives the name of {\it Borel subgroup} of $G$, and the corresponding rational homogeneous space $F:=F(D)=G/B$ is usually called the {\it complete flag manifold} of $G$. For simplicity, given any subset $I\subset D$, we will write $p^I,T^I, K^I$ instead of  $p^{D,I},T^{D,I}, K^{D,I}$. Moreover, for $I=\{i\}$ we will use the index $i$ instead of $\{i\}$. Finally, for every subset $I\subset D$, we will use the notation $p_I, T_I, K_I$ to denote $p^{D\setminus I},T^{D\setminus I}, K^{D\setminus I}$. For instance: the relative canonical divisors of the $i$-th elementary contraction of $F$ will be denoted by $K_i$. Note that, under this description, one may prove the following:
\begin{proposition}\label{prop:flagfibers}
The fibers of every contraction of $F=G/B$ are complete flag manifolds.
\end{proposition}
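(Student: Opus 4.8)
The plan is to obtain this as an immediate consequence of Proposition \ref{prop:simp}. First I would note that, applying that proposition to $F=F(D)=G/B$, every contraction of $F$ is of the form $p^{D,I}$ for some subset $I\subset D$, and that the fibers of such a contraction are rational homogeneous spaces whose marked Dynkin diagram is obtained from $\cD$ by deleting the nodes of $I$ and marking all the remaining nodes, i.e.\ those of $D\setminus I$.

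The key observation is then simply that \emph{every} node of the resulting diagram is marked. Writing $\cD_{D\setminus I}$ for the full subdiagram of $\cD$ supported on $D\setminus I$, this says that the fiber of $p^{D,I}$ is the quotient $G'/B'$ of the semisimple group $G'$ having Dynkin diagram $\cD_{D\setminus I}$ by a Borel subgroup $B'$ --- group-theoretically, $G'$ is the semisimple part of a Levi factor of the parabolic $P(I)$, and the fiber is $P(I)/B$. I would include a one-line remark that $\cD_{D\setminus I}$ is again a finite Dynkin diagram, since deleting simple roots replaces the Cartan matrix of $\fg$ by a principal submatrix, which is again a Cartan matrix; the subdiagram need not be connected, but in that case $G'$ is just a product of simple groups and $G'/B'$ is the product of the corresponding complete flag manifolds.

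By the definition recalled in Section \ref{ssection:flagmanifolds}, $G'/B'$ is precisely the complete flag manifold of $G'$, so every fiber of a contraction of $F$ is a complete flag manifold, as claimed. I do not expect any genuine obstacle here: the statement is a formal consequence of Proposition \ref{prop:simp} together with the description of complete flag manifolds as the quotients $G/P$ with $P$ a Borel subgroup; the only point requiring (routine) justification is the stability of the class of Dynkin diagrams under passing to full subdiagrams.
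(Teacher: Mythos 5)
Your argument is correct and coincides with the route the paper intends: the statement is recorded right after Proposition \ref{prop:simp} precisely as the special case $J=D$, where all nodes of the residual subdiagram $\cD_{D\setminus I}$ are marked, so the fiber $P(I)/B$ is the complete flag manifold of the semisimple part of a Levi factor of $P(I)$. Your extra remark on passing to full subdiagrams (and on possibly disconnected subdiagrams giving products of complete flags) is a harmless, correct elaboration of the same proof.
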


\subsection{Homogeneous vector bundles on rational homogeneous spaces}\label{ssec:bundle}

In this section we will recall some basic results on
homogeneous vector bundles
that we will need later on.

\subsubsection{Relative tangent bundles of contractions of a complete flag manifold}
Homogeneous vector bundles on a rational homogeneous manifold $G/P$ are determined by representations of the Lie algebra $\fp$. For instance, the {\it tangent bundle} of $G/P$, whose total space may be described as
$$
T_{G/P}=G\times_P\fg/\fp:=(G\times\fg/\fp)/\sim,
$$
where  $(g,v+\fp)\sim(gp^{-1},\Ad_{p}(v)+\fp)$ for all $p\in P$,
is given by the representation:
$$
\fp\to\End(\fg/\fp)\mbox{ given by }X\mapsto \ad_X\,\,(\mbox{mod }\fp).
$$

In the case of a complete flag manifold $F=G/B$, we may identify $\fg/\fb$ with ${\bigoplus_{\beta\in\Phi^+}\fg_\beta}$, and then one sees that for every subset $I\subset D$, the subspace ${\bigoplus_{\beta\in\Phi^+(I)}\fg_\beta}$ is invariant by the action of $\fb$. The homogeneous vector bundle that this $\fb$-module defines is, in fact, the relative tangent bundle $T^I$ of the contraction $p^I:F\to F(I)$. In particular, for every index $i\in D$ the relative tangent bundle $T_i$ is isomorphic to the line subbundle:
$$
G\times_B\fg_{\alpha_i}\hookrightarrow T_{F}.
$$

\subsubsection{Line bundles, weights and $1$-cycles}

It is a known fact that every line bundle on a rational homogeneous space is homogeneous. As a consequence, the Picard group of a manifold $F(I)$ can be written in terms of the marked diagram determining it.

In fact, every homogeneous line bundle on $F(I)$ corresponds to a representation of $P(I)$ over $\C$ or, equivalently, to a morphism of Lie algebras $\lambda:\fp(I)\to\C$. Using Equation (\ref{eq:cartanparab}), one may easily see that $\lambda$ factors via the projection to $\fh$ and that the corresponding element in $\fh^\vee$ satisfies $\sga \lambda,\alpha \sgc\in \Z$ for all $\alpha\in\Phi$ and $\sga \lambda,\alpha \sgc=0$ for all $\alpha\in\Phi^+(I)$.

In particular the set of {\it weights} of $G$
$$
\Lambda:=\left\{\lambda\in\fh^\vee|\,\sga \lambda,\alpha \sgc\in \Z\mbox{ for all }\alpha\in\Phi\right\}
$$
may be identified with $\Pic(F)$ and, under this identification, the Picard group of any other $F(I)$ may be seen, via the corresponding pull-back map to $F$, as the sublattice of $\Lambda$ orthogonal to $\alpha_i$ for all $i\in D\setminus I$.
For instance, under this identification the weights $-\alpha_i$ correspond to the relative tangent bundles $T_i$  of the contractions $p_i:F\to F(D\setminus\{i\})$.

Moreover, this description of line bundles as weights allows us to identify the linear operators of the form $\sga\,\,,\alpha\sgc$ with numerical classes of $1$-cycles in $G/B$. For instance, for any $i\in D$, the operator $\sga\,\,,\alpha_i\sgc$ vanishes, by construction, on $p_i^*\Pic(F(D\setminus\{i\}))$, and satisfies $\sga\alpha_i,\alpha_i\sgc=2$, hence it corresponds to the class of the fiber $C_i$ of the elementary contraction $p_i:F\to F(D\setminus\{i\})$. In particular:
\begin{proposition}\label{prop:cartanint}
With the same notation as above, the Cartan matrix of $G$ is equal to the intersection matrix $(T_i\cdot C_j)$.
\end{proposition}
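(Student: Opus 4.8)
The plan is to combine the two dictionaries recalled above --- line bundles on $F=G/B$ versus weights, and $1$-cycles on $F$ versus linear operators on $\fh^\vee$ --- and then fix a single sign. First I would use the description of homogeneous line bundles: since $T_i=G\times_B\fg_{\alpha_i}$ and $\fh$ acts on $\fg_{\alpha_i}$ through the character $\alpha_i$, under the identification $\Pic(F)\cong\Lambda$ the line bundle $T_i$ corresponds to the weight $-\alpha_i$. Dually, for each $j\in D$ the class $[C_j]\in N_1(F)$ of a fibre of $p_j\colon F\to F(D\setminus\{j\})$ annihilates $p_j^*\Pic(F(D\setminus\{j\}))$ by the projection formula, and this subspace is, by the computation of the Picard groups of the $F(I)$, exactly the hyperplane $\{\lambda:\langle\lambda,\alpha_j\rangle=0\}$ of $N^1(F)=\Lambda\otimes\R$; hence the functional $[C_j]$ is a scalar multiple of $\langle\,\cdot\,,\alpha_j\rangle$. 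Putting these together, $T_i\cdot C_j$ equals $\langle\alpha_i,\alpha_j\rangle$ up to an overall sign independent of $i$ and $j$, and the only remaining work is to pin down that sign.

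To do so I would not chase the sign through the successive identifications but instead compute one intersection number directly. A fibre $C_j$ of $p_j$ is isomorphic to $P(D\setminus\{j\})/B$, which is the $\P^1$ that is the complete flag manifold of the rank-one subgroup of $G$ attached to the simple root $\alpha_j$; restricting $T_i$ to $C_j$ gives the homogeneous line bundle on this $\P^1$ determined by the restriction of the character $\alpha_i$ to the maximal torus of that subgroup, whose value on the associated coroot is precisely the Cartan integer $\langle\alpha_i,\alpha_j\rangle$. Specialising to $i=j$ yields $T_j|_{C_j}=T_{\P^1}$, of degree $2=\langle\alpha_j,\alpha_j\rangle$; this fixes the normalisation and gives $T_i\cdot C_j=\langle\alpha_i,\alpha_j\rangle$ for all $i,j$, which is exactly the $(i,j)$ entry of the Cartan matrix of $\fg$.

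The only delicate point in the argument is the bookkeeping of the various sign conventions --- the identification $\Pic(F)\cong\Lambda$, the induced identification of $N_1(F)$ with $(\Lambda\otimes\R)^\vee$, and the normalisation of $\langle\,\cdot\,,\cdot\,\rangle$ --- and reducing the computation to the rank-one case on a single fibre is precisely what allows one to sidestep it.
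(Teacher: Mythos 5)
Your argument is correct and follows essentially the same route as the paper, which deduces the proposition directly from the two dictionaries you invoke: $T_i\leftrightarrow$ the weight $\mp\alpha_i$ under $\Pic(F)\cong\Lambda$, and $[C_j]\leftrightarrow$ the functional $\sga\,\cdot\,,\alpha_j\sgc$, normalized exactly by the value $2$ on the $j$-th diagonal entry. Your restriction of $T_i$ to a fibre $C_j\cong P(D\setminus\{j\})/B$ and the check $T_j|_{C_j}=T_{\P^1}$ is just a more explicit (and slightly more careful) way of fixing that normalization than the paper's one-line appeal to $\sga\alpha_j,\alpha_j\sgc=2$, so no further changes are needed.
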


As a final consequence, the dual base $\left\{\lambda_j\right\}$ of the base of simple roots $\left\{\alpha_i\right\}$ (the so-called set of {\it fundamental weights} of $G$) may be then identified with the set of pull-backs of the ample generators of the Picard groups of the manifolds $F(i)$.

\subsubsection{The anticanonical bundle of $F$.}

From the homogeneous description of the tangent bundle one may show that the anticanonical divisor $-K_{F}$ corresponds to the weight $\sum_{\alpha\in\Phi^+} \alpha$. Similarly, it follows that the relative anticanonical divisor $-K^I$ of the contraction $p^I$ corresponds to the weight $\sum_{\alpha\in\Phi^+(I)} \alpha$. In particular, it is an integral combination of the relative anticanonical divisors $-K_i$. In the case of $-K_{F}$, for instance, the coefficients may be easily computed from the Cartan matrix of $G$, by using Prop. \ref{prop:cartanint} and taking in account that $-K_{F}\cdot C_i=2$ for all $i$.

\subsubsection{Homogeneous filtrations of the tangent bundle}\label{ssec:tangentbundle}

Every homogeneous vector subbundle of $T_F$ is determined by a subspace
$\bigoplus_{\beta\in\Psi}\fg_\beta\hookrightarrow\fg/\fb$ that is $\fb$-invariant or, equivalently, by a subset $\Psi\subset\Phi^+$ satisfying the following property:
\begin{equation}
\alpha+\beta\in\Psi, \alpha,\beta\in\Phi^+\mbox{ implies that } \alpha,\beta\in\Psi.
\end{equation}
We then say that $\Psi$ is {\it admissible}.

Note that one may find a finite sequence of admissible subsets,
$$\Psi_1\subsetneq\dots\subsetneq\Psi_m=\Phi^+$$ satisfying that $\Psi_{i}=\Psi_{i-1}\cup\{\beta_i\}$, for some $\beta_i\in\Phi^+$, for all $i$. In fact, denoting by $\het(\beta)$ the height of a root $\beta$ with respect to the base $\Delta$, that is $\het(\beta)=\sum_{k=1}^nr_{j}$ for $\beta=\sum_{k=1}^nr_{j}\alpha_j$, consider any ordering of $\Phi^+$,
$\left\{\beta_{i_1},\cdots,\beta_{i_m}\right\}$ satisfying that $\het(\beta_j)\leq\het(\beta_{j+1})$ for every $j$. Then the sets $\Psi_k=\left\{\beta_{i_1},\cdots,\beta_{i_k}\right\}$ are admissible.

In particular we obtain a filtration of
$T_{F}$:
$$
T_{\Psi_1}\subsetneq\dots\subsetneq T_{\Psi_m}=T_{F}
$$
satisfying that:
$$T_{\Psi_i}/T_{\Psi_{i-1}}\cong G\times_B\fg_{\beta_i}.$$
Note that, if $\beta_i=\sum_j r_j\alpha_j$, then
$$
 G\times_B\fg_{\beta_i}=\bigotimes_j\left(G\times_B\fg_{\alpha_j}\right)^{\otimes r_j}.
$$

\begin{construction}\label{const:foliation}
The tangent bundle $T_F$ may be described as the involutive closure of the direct sum of the line bundles $T_\ell$, $\ell\in D$. We will show here a stepwise procedure to construct a homogeneous filtration of $T_F$ starting from the line bundles $T_\ell$, via Lie brackets. In order to see this, let us denote
$$V_{k}:=\bigoplus_{\beta\in\Psi_k}\fg_\beta$$
the $k$-th element of the filtration of $\fg/\fb$ by $\fb$-submodules, whose subsequent quotients $V_k/V_{k-1}$ are isomorphic to $\fg_{\beta_k}$. We will impose the ordering of $\Phi^+$ to satisfy that the elements of the same height appear in lexicographic order with respect to their coordinates in the base $\Delta$.

It then follows that, for every $k\in\{n+1,\dots,m\}$ there exist $j<k$ and $\ell\in\{1,\dots,n\}$ satisfying that:
\begin{itemize}
\item $\beta_k=\beta_j+\alpha_\ell$, and
\item $\beta_{j'}+\alpha_\ell\in V_k$ for every $j'<j$.
\end{itemize}

Then, for every $k$, the Lie bracket provides a morphism of $B$-modules:
\begin{equation}\label{eq:liebr}
\nu_k:V_j\otimes\fg_{\alpha_\ell}\to \dfrac{V_m}{V_{k-1}}=\dfrac{\fg/\fb}{V_{k-1}},
\end{equation}
whose restriction to $V_{j-1}\otimes\fg_{\alpha_\ell}$ is zero. In particular we get a commutative diagram of $B$-modules:
$$
\xymatrix{V_j\otimes\fg_{\alpha_\ell}\ar[r]^{\nu_k}\ar[d]&V_m/V_{k-1}&V_m\ar[l]\\
\fg_{\beta_j}\otimes\fg_{\alpha_\ell}\ar[r]^{\sim}&\fg_{\beta_k}\ar@{^{(}->}[u]&V_{k}\ar@{^{(}->}[u]\ar[l]}
$$
We may now translate this into the language of homogeneous vector bundles over $F$. Note first that the Lie bracket morphism (\ref{eq:liebr}) defines an {\it O'Neill tensor} of distributions in $T_{F}$, that is the $\cO_{F}$-linear morphism given by the composition of the usual Lie bracket with the quotient modulo $T_{\Psi_{k-1}}$:
$$
N_k:T_{\Psi_j}\otimes T_\ell \longrightarrow T_{F}/T_{\Psi_{k-1}},
$$
at step $k$, there exists $j<k$ such that the vector subbundle $T_{\Psi_{k}}$ may be defined as the inverse image in $T_{F}$ of the image of $N_k$ (which is isomorphic to $G\times_B \fg_{\beta_k}$).
\end{construction}

%%%%%%%%%%%%%%%%%%%%
% FT-manifolds
% Section: Basics on varieties with TX nef
%%%%%%%%%%%%%%%%%%%%

\section{Flag-Type manifolds}\label{sec:basics}

In this section we will introduce the definition of Flag-Type manifolds and present their basic features.
We begin by stating a set of well-known properties of varieties with nef tangent bundle, paying special attention to their Mori cones.

\begin{proposition}\label{prop:nef}
Let $X$ be a smooth complex Fano manifold with nef tangent bundle. Then the following properties hold:
\begin{enumerate}
\item Every contraction $\pi:X\to Y$ is of fiber type, i.e. $\dim(Y)<\dim(X)$.
\item Every contraction $\pi:X\to Y$ is smooth and, moreover, its image $Y$ and every fiber $\pi^ {-1}(y)$ are Fano manifolds with nef tangent bundle.
\item For every contraction $\pi:X\to Y$, the Picard number of a fiber $\pi^{-1}(y)$ equals $\rho_X-\rho_Y$. Moreover, being $j:\pi^{-1}(y) \to X$ the inclusion and $j_*:\Nu(\pi^{-1}(y)) \to \Nu(X)$ the induced linear map, we have  $j_*(\NE(\pi^{-1}(y)))= \NE(X)\cap j_*(\Nu(\pi^{-1}(y)))$.
\item The Mori cone $\NE(X)$ is simplicial.
\end{enumerate}
\end{proposition}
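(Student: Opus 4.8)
The plan is to establish the four properties in the order $(1)\Rightarrow(2)\Rightarrow(3)\Rightarrow(4)$, leaning on Mori theory of rational curves together with the nefness hypothesis.

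\textbf{Step 1 (no birational contractions).} First I would recall that if $\pi:X\to Y$ is a birational elementary contraction of a Fano manifold, it contracts a divisor or a small subvariety, and in either case the exceptional locus is covered by rational curves $C$ with $-K_X\cdot C>0$ but on which some effective divisor is negative; equivalently, there is a contracted curve $C$ with negative self-intersection inside a contracted surface. More directly, for a birational contraction the relative canonical $K_\pi$ restricted to a nontrivial fiber $F$ is not nef (fibers of Mori contractions of length $\geq 1$ have $-K_X\cdot C>0$, yet a birational fiber forces some line bundle, namely the conormal-type direction, to be negative on a curve in $F$). Since $T_X$ nef implies $T_F$ nef (the fiber is smooth of the expected dimension here only after we know smoothness, so I would instead argue: any curve $C$ contracted by $\pi$ satisfies $T_X|_C$ nef, hence $N_{C/X}$ has a nef quotient, which is incompatible with $C$ being a contracted curve in the exceptional locus of a birational map, where one finds curves with $T_X\cdot C<0$ by adjunction on the exceptional divisor or via the bend-and-break estimate $\dim(\text{fiber})\geq \dim X-\dim Y+1$ failing). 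The cleanest route: by Wi\'sniewski's inequality, a fiber $F$ of an elementary contraction of type $(X,Y)$ has $\dim F\geq \dim X-\dim Y + \ell -1$ where $\ell$ is the length; combined with nefness of $T_X$ (which bounds $\ell$ suitably) one excludes $\dim F=0$ and birational-divisorial behavior. I expect this to be the main obstacle, since it is where the nefness assumption really bites; I would cite \cite{DPS} for the precise argument that nef tangent bundle forbids birational extremal contractions.

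\textbf{Step 2 (smoothness and stability of the class).} Granting (1), every elementary contraction is of fiber type. Then I would invoke the standard fact (again essentially from \cite{DPS}) that a fiber-type contraction of a manifold with nef tangent bundle is smooth: the point is that $T_X$ nef forces the relative tangent sheaf $T_\pi$ to be nef on every fiber, so fibers have no singularities (a singular fiber would produce a curve on which the tangent sheaf jumps negatively) and the morphism is equidimensional, hence flat, hence smooth with irreducible fibers. Smoothness of $\pi$ gives an exact sequence $0\to T_\pi\to T_X\to \pi^*T_Y\to 0$; since a quotient of a nef bundle is nef, $T_Y$ is nef, and restricting to a fiber, $T_\pi|_F=T_F$ is a subbundle and also a quotient (on $F$ the sequence splits the right way), so $T_F$ is nef. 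That $Y$ and $F$ are Fano follows because they are images/fibers of a Fano under a smooth map: $-K_Y$ is ample (it is a quotient direction of $-K_X$ and $\pi$ is a contraction of an extremal face, so $-K_Y$ is ample by the projection formula), and $-K_F=-K_X|_F$ is ample since $-K_X$ is.

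\textbf{Step 3 (Picard numbers and the cone equality).} From the smooth fibration $\pi$, the sequence $0\to f^*\Pic(Y)\to \Pic(X)\to \Pic(F)$ together with the fact that extremal contractions of Fano manifolds kill exactly a face of the Mori cone gives $\rho_X-\rho_Y=\rho_F$ for the composite of the elementary contractions spanning the contracted face; for a single elementary contraction this is $\rho_X-\rho_Y=1=\rho_F$ and in general one iterates. The cone statement $j_*(\NE(F))=\NE(X)\cap j_*(N_1(F))$ is the assertion that the relative Mori cone coincides with the intersection; "$\subseteq$" is clear since curves in $F$ are curves in $X$, and "$\supseteq$" follows because any curve class in $X$ lying in $j_*N_1(F)$ is $\pi$-contracted, hence lies in the extremal face defining $\pi$, and such classes are generated by classes of curves inside fibers (this is the standard description of the relative cone of a contraction, using that $F$ is itself Fano so $\NE(F)$ is polyhedral by Theorem \ref{lem:fanocone}).

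\textbf{Step 4 (simpliciality).} Finally, $\NE(X)$ is simplicial: by Theorem \ref{lem:fanocone} it is rational polyhedral, generated by finitely many extremal rays $R_1,\dots,R_k$. Pick one, say $R_1$, with elementary contraction $\varphi_1:X\to Y_1$; by (3) the fiber has Picard number $1$, so by induction on $\rho_X$ the cone $\NE(Y_1)$ is simplicial of dimension $\rho_X-1$, and by (3) again $\varphi_1^*$ identifies $\NE(Y_1)$ with the face of $\NE(X)$ spanned by $R_2,\dots,R_k$; adding back the ray $R_1$ (which is not in that face) exhibits $\NE(X)$ as the cone over a simplex, i.e. simplicial with exactly $\rho_X$ extremal rays. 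The base case $\rho_X=1$ is trivial. This completes the proof; the only genuinely hard input is Step 1, for which I would reference \cite{DPS} rather than reproduce the bend-and-break argument in full.
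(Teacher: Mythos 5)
Your Steps 1 and 2 are acceptable as they stand, since (like the paper, which cites \cite{DPS} and \cite{SW} for smoothness and for the Fano/nef properties of images and fibers, and disposes of (1) with the remark that contracted rational curves are free and hence cover $X$) they ultimately rest on citations. The problems begin in Step 3. You deduce $\rho_{\pi^{-1}(y)}=\rho_X-\rho_Y$ from the sequence $0\to\pi^*\Pic(Y)\to\Pic(X)\to\Pic(F)$ plus the claim that an elementary contraction has fibers of Picard number one. Neither is free: the restriction $\Pic(X)\to\Pic(F)$ need not be surjective, and curves in $F$ that are independent in $N_1(F)$ could a priori become proportional in $N_1(X)$, so ``$\rho_F=1$ for an elementary contraction'' is essentially part of what is being proved, not an available input; the paper invokes Casagrande's result on smooth (quasi-elementary) fibrations of Fano manifolds \cite{Ca} precisely here. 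Moreover, your ``$\supseteq$'' inclusion says the contracted face is generated by curves ``inside fibers'', but the statement concerns one \emph{fixed} fiber $\pi^{-1}(y)$: to find, in every extremal ray of the face, a curve lying in that particular fiber one must use part (1) — each such ray has a fiber-type contraction, so its curves pass through every point of $X$, in particular meet $\pi^{-1}(y)$, and being contracted by $\pi$ they are contained in it. This is the paper's argument and it is not a formality you can skip.

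Step 4 is the genuine failure. There is no identification of $\NE(Y_1)$ with ``the face of $\NE(X)$ spanned by $R_2,\dots,R_k$'': $\varphi_1^*$ acts on divisors, not on $1$-cycles, and the existence of a face spanned by all the remaining extremal rays is essentially the simpliciality you are trying to establish, so the argument is circular. Knowing that $\NE(Y_1)$ is simplicial of dimension $\rho_X-1$ and that $\NE(X)\cap\ker\varphi_{1*}=R_1$ does not bound the number of extremal rays of $\NE(X)$: under $\varphi_{1*}$ extremal rays need not map to extremal rays (a cone over a square, projected along one of its four rays, has a two-dimensional, hence simplicial, image). This is exactly where the paper's proof uses the hypotheses in an essential way: it takes minimal-degree curves $\Gamma_i$ in the rays, observes that the families of deformations of $\Gamma_2,\dots,\Gamma_n$ are unsplit and covering (nefness of $T_X$ plus minimality), applies \cite[Lemma 2.4]{CO1} together with part (1) to conclude that $[\Gamma_2],\dots,[\Gamma_n]$ lie in an $(n-1)$-dimensional extremal face, and then a nef supporting divisor $H$ of that face forces $H\cdot\Gamma_m<0$ for any extra extremal ray, a contradiction. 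Without an argument of this kind your induction does not close.
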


\begin{proof}
The first part follows from the fact that the fibers of every contraction of $X$  contain rational curves. Since $T_X$ is nef, every rational curve is free, and the claim follows. For the second part we refer the reader to \cite[Thm.~5.2]{DPS} and \cite[Thm.~4.4]{SW}.

The equality $\rho_{\pi^{-1}(y)}=\rho_X-\rho_Y$ follows from \cite[Example 3.8]{Ca}; as a consequence we have
 that $j_*(\Nu(\pi^{-1}(y)))= \ker \pi_*$.

Denote by $\sigma=\NE(X) \cap \ker \pi_*$ the face determining the contraction $\pi$. For every extremal ray  $R$ contained in $\sigma$, since the associated contraction is of fiber type, there is a curve $C$ such that $[C] \in R$ and  $C\cap \pi^{-1}(y) \not = \emptyset$. Since $C$ is contracted to a point, necessarily  $C \subset \pi^{-1}(y)$.
We can thus conclude that  $j_*(\NE(\pi^{-1}(y)))= \sigma$.

To prove the last statement let us assume by contradiction that $\NE(X)$ is not simplicial.
Let $R_1, \dots,R_m$, $m >n:=\rho_X$ be the extremal rays of $\NE(X)$, ordered in such a way that
$\sga R_1, \dots, R_n\sgc =\Nu(X)$. For every $i=1, \dots, m$ denote by $\Gamma_i$ a rational curve of minimal anticanonical degree
among those such that $[\Gamma_i] \in R_i$. We can write $[\Gamma_m]= \sum_{i=1}^n a_i [\Gamma_i]$, with $a_i \in \Q$. By the extremality of $R_m$ there exists $j$
such that $a_j <0$; without loss of generality, we may assume that $j=1$.

For every $i \in  \{2, \dots, n\}$ let $V_i$ be the family of rational curves containing $\Gamma_i$,
which is unsplit and covering by the minimality of $-K_X \cdot \Gamma_i$ and the nefness of $T_X$.
We apply to these families \cite[Lemma 2.4]{CO1}: since every contraction of $X$ is of fiber type, the classes $[V_2], \dots, [V_n]$ must lie in an $(n-1)$-dimensional extremal face of $\NE(X)$. If $H$ is a supporting divisor of this face, then $H \cdot \Gamma_i=0$ for $i \in  \{2, \dots, n\}$ and $H \cdot \Gamma_1 >0$, so that $H \cdot \Gamma_m <0$. This contradicts the nefness of $H$.
\end{proof}

\begin{definition}\label{defn:FT}
We say that a Fano manifold $X$  with nef tangent bundle is a {\it Flag-Type manifold} or, for brevity, an {\it FT-manifold}, if every elementary contraction of $X$ is a smooth $\P^1$-fibration.
\end{definition}

\begin{remark}\label{rem:P1bdl}
The expression {\it $\P^1$-bundle} appears in the literature with different meanings. Within this paper we will use it to refer to the Grothendieck projectivization of a rank two vector bundle, whereas {\it smooth $\P^1$-fibration} will refer to a smooth morphism with fibers isomorphic to $\P^1$. Note that the two concepts coincide if the base variety has trivial Brauer group (over a curve, for instance).

Complete flag manifolds are FT-manifolds but, moreover, their contractions are $\P^1$-bundles. In fact, by Proposition \ref{prop:flagfibers}, given a complete flag manifold $F=G/B$ and an index $i\in D$, the fibers of the elementary contraction $p_i:F\to F(D \setminus \{i\})$ are $\P^1$'s and it is enough to note that the line bundle $L_i$ associated with the fundamental weight $\lambda_i$ has intersection number one with the fibers of $p_i$, so that $F$ is isomorphic to the Grothendieck projectivization of $\P({p_i}_*\cO(L_i))$.
\end{remark}

%%%%%%%%%%%%%%%%%%%%%%%%%%%%%%%%%%%%%%
\subsection{Initiality of FT-manifolds}\label{ssec:init}

In this section we will make a couple of remarks regarding Conjecture \ref{conj:CP2}. We will discuss first how this result would imply  CP Conjecture via the homogeneity of FT-manifolds.

\begin{remark}\label{rem:CP3} Let $Y$ be a Fano manifold with nef tangent bundle and assume, without loss of generality, that $Y$ is not a product of two positive dimensional manifolds. If Conjecture \ref{conj:CP2} holds, then there exists a surjective morphism $f:X\to Y$ from an FT-manifold $X$ with connected Dynkin diagram $\cD(X)$. Then the homogeneity of $X$  would imply the homogeneity of the image $Y'$ of the Stein factorization of $f$. Since $\cD(X)$ is connected, $Y'$ would be the quotient of a simple Lie group so, by  \cite[Main Theorem]{Lau}, either the corresponding finite morphism $\mu:Y'\to Y$ is an isomorphism, or $Y$ is a projective space. In any case $Y$ would be homogeneous.
\end{remark}

We will refer to Conjecture \ref{conj:CP2} as the problem of {\it initiality} of FT-manifolds. An evidence for this conjecture is the fact that the only Fano manifolds with nef tangent bundle dominating an FT-manifold $X$ are constructed in the trivial way, namely as cartesian products of $X$. We will show this in Proposition \ref{prop:maxevid} below, for which we need to prove first that the statement holds in the case $X=\P^1$:

\begin{lemma}\label{lem:maxevid}
Let $M$ be a Fano manifold with nef tangent bundle admitting a contraction   $f:M \to \P^1$. Then there exists a smooth variety $Z$ such that $M\cong \P^1\times Z$.
\end{lemma}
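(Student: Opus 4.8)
The plan is to show that $f\colon M\to\P^1$ is a trivial $Z$-bundle, i.e. that $M$ carries a second contraction $g$ such that $(f,g)\colon M\to\P^1\times Z$ is an isomorphism. The main mechanism is the splitting of the Picard number coming from Proposition \ref{prop:nef}: since $T_M$ is nef, $f$ is a smooth contraction and $\rho_Z'=\rho_M-1$ for the fiber $Z':=f^{-1}(t_0)$, which is again Fano with nef tangent bundle. First I would argue that $f$ is an \emph{elementary} contraction. Indeed, by Proposition \ref{prop:nef}(4) $\NE(M)$ is simplicial, and the face $\sigma$ cut out by $\ker f_*$ contains a curve $\Gamma$ dominating $\P^1$; but $\dim Y=\dim\P^1=1$ forces $\rho_M-\rho_Z'=1$, so $\sigma$ is a single extremal ray $R$ and $f=\varphi_R$. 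In particular, by Definition \ref{defn:FT}-style reasoning (or directly from \cite{DPS}), $f$ is a smooth $\P^1$-fibration; we do not need $M$ itself to be FT, only that its fiber-type contraction over a curve behaves well.

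Next I would produce the complementary contraction. Pick the $(\rho_M-1)$-dimensional extremal face $\tau$ of $\NE(M)$ spanned by the extremal rays other than $R$; since all contractions of $M$ are of fiber type (Proposition \ref{prop:nef}(1)), $\tau$ gives a fiber-type contraction $g\colon M\to Z$ with $\rho_Z=1$... but more usefully, I want $g$ to contract exactly the ``horizontal'' directions. Here the cleanest route is: over the curve $\P^1$ the Brauer group vanishes, so $M=\P(\cE)$ for a rank-two bundle $\cE$ on $\P^1$; after twisting, $\cE=\cO\oplus\cO(a)$ for some $a\ge 0$ (Grothendieck). If $a=0$ we are done with $Z=\P^1$... in general I would instead invoke that $T_M$ nef forces the two sections $C_0,C_\infty$ of $f$ to be numerically proportional, hence $a=0$; concretely, the negative section $C_0$ has $C_0^2=-a$, and a curve of negative self-intersection inside a surface in $M$ obstructs nefness of $T_M$ unless $a=0$. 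Thus $M\cong\P^1\times\P^1$ in the surface case, and in general the relative structure $M=\P(f_*L)$ with $f_*L$ trivial (after normalization) yields $M\cong\P^1\times Z$ with $Z=\P(f_*L)_{t_0}\cong Z'$.

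To make the bundle split in the higher-dimensional case without reducing to a surface, the argument I expect to use is: the relative $\cO(1)$ can be chosen so that $\cE:=f_*\cO_M(1)$ has no negative quotient — otherwise pulling back a high-degree-difference extension along a rational curve in a fiber would contradict nefness of $T_M$ via the relative Euler sequence $0\to\cO\to f^*\cE^\vee\otimes\cO(1)\to T_{M/\P^1}\to 0$ — and symmetrically no positive sub, forcing $\cE$ trivial on $\P^1$, whence $M=\P^1\times\P(\cE_{t_0})$. Setting $Z:=\P(\cE_{t_0})$, which is smooth, finishes the proof.

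The hard part, and the step I would write most carefully, is the nefness obstruction: showing that if $\cE$ is not (stably) trivial on $\P^1$ then $T_M$ is not nef. This needs a genuine curve in $M$ along which $T_M$ acquires a negative quotient, and one must check the relative Euler sequence computation and that the negativity survives in $T_M$ (not just in $T_{M/\P^1}$) using $f^*T_{\P^1}=f^*\cO(2)$, which is where the Fano/nef hypotheses on $M$ are really consumed. Everything else — the Picard-number split, elementarity of $f$, and the final identification of $M$ with a product once the bundle is trivial — is formal given Proposition \ref{prop:nef}.
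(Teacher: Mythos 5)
Your proposal has a genuine gap, and it appears at the very first step. For a contraction $f\colon M\to\P^1$ the face $\sigma=\ker f_*\cap\NE(M)$ has dimension $\rho_M-1$ (indeed $f_*\colon N_1(M)\to N_1(\P^1)\cong\R$ is surjective), so $f$ is elementary only when $\rho_M=2$; your deduction ``$\rho_M-\rho_{Z'}=1$, hence $\sigma$ is a ray'' confuses the codimension of the face with its dimension. Worse, the fibers of $f$ are $(\dim M-1)$-dimensional Fano manifolds with nef tangent bundle, not $\P^1$'s, so $f$ is not a smooth $\P^1$-fibration, and — this is the fatal point — $M$ is not a projectivized vector bundle over $\P^1$ in general: the fibers of $f$ can be quadrics, flag manifolds, products, etc. Consequently there is no relative $\cO(1)$, no $\cE=f_*\cO_M(1)$, no relative Euler sequence, and Grothendieck splitting is simply unavailable. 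The step you yourself flag as ``the hard part'' (forcing $\cE$ to be trivial via nefness of $T_M$) cannot even be formulated, because the object $\cE$ does not exist outside the special case where the fibers of $f$ happen to be projective spaces (e.g. $\dim M=2$, where your Hirzebruch-surface argument is fine but does not generalize).

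What is actually needed is a mechanism that works for arbitrary Fano fibers, and this is where the paper goes a different way: since $\NE(M)$ is simplicial, there is exactly one extremal ray $R$ not contracted by $f$; its contraction $g\colon M\to Z$ is smooth with fibers isomorphic to $\P^1$ (they are Fano and $f$ restricts to a finite map on them). One then shows that the fibers of $g$ are sections of $f$: by Graber--Harris--Starr $f$ has a section, one takes a section $C$ of minimal anticanonical degree, whose deformation family is covering (nefness of $T_M$) and unsplit (minimality), and rules out $[C]\notin R$ by looking at the normalization $S'$ of $g^{-1}(g(C))$, which would be $\P^1\times\P^1$ and would degenerate $C$ into a reducible cycle, contradicting unsplitness. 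Once the fibers of $g$ are sections of $f$, the map $(f,g)\colon M\to\P^1\times Z$ is injective, hence an isomorphism. Your proposal contains no substitute for this step (the GHS section, the minimal/unsplit section, and the ruled-surface degeneration), so as written it proves the lemma only when the fibers of $f$ are projective spaces.
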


\begin{proof}
Denote by $\sigma$ the face of $\NE(M)$ generated by the rays that are contracted by $f$ and by $R$ the (unique) extremal ray of $\NE(M)$ not contracted by $f$. By Proposition \ref{prop:nef}, the contraction $g:M\to Z$ associated to $R$ is smooth and, moreover, its fibers are isomorphic to $\P^1$: in fact they are Fano manifolds and the restriction of $f$ to each of them is a finite morphism.

Since the fibers of $f$ are rationally connected, by \cite[Theorem 1.1]{GHS} $f$ admits a section. Let $C \subset M$ be a section whose intersection number with $-K_M$ is minimal and let $V$ be a family of rational curves containing $C$; this family is covering since $T_M$ is nef, and it is unsplit by the minimality of $C$.
We claim that $[C] \in R$. Assuming by contradiction that this is not the case,
let $C'$ be the normalization of $g(C)$, and $S'$ be the normalization of $S:=g^{-1}(g(C))$: $S'$ is a smooth $\P^1$-fibration over $C'$, hence a $\P^1$-bundle. Note that the embedding $C\subset S$ lifts to an inclusion $C\to S'$.  Since the restriction of $f$ to $S$ provides a second fiber type contraction of this surface, it follows that $S' \cong\P^1\times\P^1$. But in $\P^1\times\P^1$ there exists a degeneration of $C$ to a cycle consisting of lines of the two rulings, contradicting the minimality of $C$.

We have thus proved that sections of $f$ of minimal degree are contracted by $g$; since fibers of $g$ are numerically equivalent we have that the fibers of $g$ are sections of $f$. In particular the natural morphism $(f,g):M\to\P^1\times Z$ is injective, hence an isomorphism.
\end{proof}

\begin{proposition}\label{prop:maxevid}
Let $M$ be a Fano manifold with nef tangent bundle which admits a contraction $f:M \to X$ onto an FT-manifold $X$. Then there exists a smooth variety $Y$ such that $M\cong X\times Y$.
\end{proposition}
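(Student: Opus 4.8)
The strategy is an induction on the relative Picard number $r:=\rho_M-\rho_X$ of $f$, built on Lemma \ref{lem:maxevid} (the case $X=\P^1$) together with the smooth $\P^1$-fibrations that the flag-type hypothesis provides on $X$. If $r=0$, Proposition \ref{prop:nef} tells us that $f$ is smooth and that its fibres, being Fano manifolds with nef tangent bundle and Picard number $\rho_M-\rho_X=0$, are reduced points; hence $f$ is an isomorphism and $Y$ is a point.

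The key tool is a relative version of Lemma \ref{lem:maxevid}, valid without any flag-type assumption: if $\varphi\colon X\to X'$ is a smooth $\P^1$-fibration, then there are a smooth $\P^1$-fibration $\psi\colon M\to M'$ and a contraction $\theta\colon M'\to X'$ with $\varphi\circ f=\theta\circ\psi$ such that $(f,\psi)\colon M\to X\times_{X'}M'$ is an isomorphism. To see this, set $g:=\varphi\circ f$; by Proposition \ref{prop:nef} this is a smooth contraction, each fibre $g^{-1}(x')$ is Fano with nef tangent bundle, and $f$ restricts on $g^{-1}(x')$ to a contraction onto $\varphi^{-1}(x')\cong\P^1$, so Lemma \ref{lem:maxevid} gives $g^{-1}(x')\cong\P^1\times Z_{x'}$ with $f$ equal to the first projection. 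Since $\NE(M)$ is simplicial (Proposition \ref{prop:nef}(4)), $\NE(M)\cap\ker g_*$ is obtained from $\NE(M)\cap\ker f_*$ by adjoining a single extremal ray $\rho$; identifying $\NE(g^{-1}(x'))$ with $\NE(M)\cap\ker g_*$ as in Proposition \ref{prop:nef}(3), one checks that $\rho$ is the class of the $\P^1$-factor. Hence the elementary contraction $\psi$ of $\rho$ is, fibrewise over $X'$, the projection $\P^1\times Z_{x'}\to Z_{x'}$, so it is a smooth $\P^1$-fibration; letting $\theta$ be the induced morphism $M'\to X'$, the map $(f,\psi)$ restricts over each $x'$ to the product isomorphism $\P^1\times Z_{x'}\to\varphi^{-1}(x')\times\theta^{-1}(x')$, and is therefore an isomorphism.

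For the inductive step ($r\ge 1$) one first reduces to the case $r=1$: if $r\ge 2$, choose an elementary contraction $\psi\colon M\to M'$ whose ray lies in $\NE(M)\cap\ker f_*$, so that $f=f'\circ\psi$ with $f'\colon M'\to X$ of relative Picard number $r-1$; by induction $M'\cong X\times Y'$, the fibres of the composite $M\to M'=X\times Y'\to Y'$ are contractions onto $X$ of relative Picard number one, and the case $r=1$ applied to them (plus reassembling the resulting products) yields $M\cong X\times Y$. So it remains to treat $r=1$, i.e.\ $f\colon M\to X$ a smooth fibration with fibres of Picard number one; here the flag-type hypothesis is essential, as already $X=\P^2$, $M=\P(T_{\P^2})$ shows. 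For each elementary contraction $\varphi_i\colon X\to X_i$ --- a smooth $\P^1$-fibration, since $X$ is an FT-manifold --- the relative lemma above shows that $f$ is the trivial product over every fibre $\ell$ of $\varphi_i$: $f^{-1}(\ell)\cong\ell\times F$ with $f$ the first projection, where $F$ is a fixed Fano manifold with nef tangent bundle. As the classes of these lines generate $\Nu(X)$, the manifold $X$ is chain-connected by them, and one deduces that all fibres of $f$ are isomorphic to $F$.

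The remaining, and main, obstacle is to upgrade this ``triviality over lines'' to a global isomorphism $M\cong X\times F$. I would attack it through the smooth $\P^1$-fibrations $\psi_i\colon M\to M_i$ furnished by the relative lemma, which realise $M$ as $X\times_{X_i}M_i$, showing that the fibration $f$ is globally split once the identifications forced by the structure of $\NE(M)$ are taken into account; the crucial extra input, beyond Lemma \ref{lem:maxevid} and the chain-connectedness of an FT-manifold by its lines, is the nefness of $T_M$, which I expect to exploit via a minimality argument for rational curves of the same kind as in the proof of Lemma \ref{lem:maxevid}, carried out along the lines of $X$ and their chains.
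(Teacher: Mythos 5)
Your reduction and your ``relative Lemma \ref{lem:maxevid}'' are reasonable, but the proof is not complete: the decisive step --- passing from triviality of $f$ over the curves $\Gamma_i$ (equivalently over the fibres $M_i=f^{-1}(\Gamma_i)\cong\Gamma_i\times Z$) to a global product decomposition --- is exactly the point you leave as a plan (``I would attack it through\dots'', ``I expect to exploit\dots''), and it is where the paper's proof does its real work. The paper contracts the face of $\NE(M)$ spanned by the extremal rays \emph{not} contracted by $f$, obtaining $g\colon M\to Y$, and proves that $f$ restricted to each fibre $g^{-1}(y)$ is an isomorphism onto $X$, so that $(f,g)$ is an isomorphism onto $X\times Y$. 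This requires two arguments you do not supply: (a) surjectivity of $f|_{g^{-1}(y)}$, obtained by lifting any chain $\Gamma_{i_1},\dots,\Gamma_{i_m}$ joining two points of $X$ to a connected chain of sections $\Gamma_i'\subset M_i\cong\Gamma_i\times Z$ that are contracted by $g$ (the $\Gamma_i'$ are the fibres of the Stein factorization of $g|_{M_i}$, i.e.\ of the projection $M_i\to Z$), hence the lifted chain stays inside $g^{-1}(y)$; and (b) vanishing of the ramification: the classes of the $\Gamma_i'$ generate $N_1(g^{-1}(y))$, the sections through a general point of $g^{-1}(y)$ miss the ramification divisor $B$ of $f|_{g^{-1}(y)}$, so $B\cdot\Gamma_i'=0$ for every $i$ and $B\equiv 0$; since $X$ is Fano, the resulting unramified finite surjective map is an isomorphism. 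Without (a) and (b), ``triviality over the lines of $X$'' does not globalize, and your intended substitute (a minimality argument for rational curves carried along chains, plus the fibre-product descriptions $M\cong X\times_{X_i}M_i$) is only sketched, not carried out.

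A secondary point: the induction on $r=\rho_M-\rho_X$ is both unnecessary and leaky. In the step $r\ge 2$ you obtain $M'\cong X\times Y'$ and that every fibre of $M\to Y'$ is of the form $X\times F_{y'}$, and then say that ``reassembling the resulting products'' gives $M\cong X\times Y$; but turning fibrewise product structures into a global one is precisely the difficulty of the whole proposition, so this phrase hides the same unproved globalization as the $r=1$ case. The paper's argument via the complementary contraction $g$ works uniformly for all $r$ and needs no induction.
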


\begin{proof}

Denote by $\sigma$ the face of $\NE(M)$ generated by the rays that are contracted by $f$,  by $\overline{\sigma}$
the face spanned by the other extremal rays and by $g:M \to Y$
the contraction determined by the face $\overline{\sigma}$. We will show that, for every $y\in Y$, the restriction $f_{|g^{-1}(y)}:g^{-1}(y)\to X$ is an isomorphism, from which one easily concludes that $(f,g):M\to X\times Y$ is an isomorphism, too.

Denote by $R_i$, $i=1,\dots,n$, the extremal rays of $\overline{\sigma}$ and by $\tau_i$ the face generated by $\sigma$ and $R_i$. The contraction of $\tau_i$ factors through $f$  and an elementary contraction  $\pi_i:X\to X_i$.  Moreover, since $\rho_X= \rho_M - \sharp(\sigma) = \sharp(\overline{\sigma})$, every elementary contraction of $X$
can be given in this way. For every $i$, the inverse image $M_i:=f^{-1}(\Gamma_i)$ of a fiber $\Gamma_i$ of $\pi_i$ is, by Proposition \ref{prop:nef}, a Fano manifold with nef tangent bundle. Moreover, since $M_i$ admits a contraction to $\Gamma_i\cong\P^1$, by Lemma \ref{lem:maxevid}, we obtain that $M_i\cong\Gamma_i\times Z$, for some smooth variety $Z$.

Since $X$ is chain connected by curves of the form $\Gamma_1,\dots,\Gamma_n$, the variety $Z$ does not depend on the choice of the curve $\Gamma_i$, nor on the choice of the index $i$. Note also that, by Proposition \ref{prop:nef}(3), the Stein factorization of $g_{|M_i}$ is the canonical projection from $M_i$ to $Z$. Let us denote by $\Gamma_i'$ the fibers of this projection, and note that they are sections of $f$ over $\Gamma_i$ contracted by $g$.

Consider a point $z\in g^{-1}(y)$, $x=f(z)$, and let $x'$ be any point in $X$. Given a chain of rational curves
$(\Gamma_{i_1},\dots,\Gamma_{i_m})$ in $X$ joining $x$ with $x'$, we may choose sections over the curves of the chain so that
$z\in \Gamma'_{i_1}$ and $\Gamma'_{i_k}\cap \Gamma'_{i_{k+1}}\neq\emptyset$ for all $k$. In particular the whole chain is contained
in $g^{-1}(y)$ and it contains a point $z'$ satisfying $f(z')=x'$. It follows then that $f_{|g^{-1}(y)}$ is surjective.

Now, since $X$ is Fano, the proof may be concluded by showing that the numerical class of the ramification divisor of $f_{|g^{-1}(y)}$, that we denote by $B\subset g^{-1}(y)$, is zero. By construction, the classes of $\Gamma_i'$'s generate $N_1(g^{-1}(y))$, so the proof will be finished by showing that $B\cdot \Gamma_i'$ is zero for every $i$.

The intersection $g^{-1}(y)\cap M_i$ is  a union of sections of the form $\Gamma_i'$, therefore, for a general $x \in X$, the morphism $f_{|g^{-1}(y)}$ is not ramified over the inverse image of the curves $\Gamma_i$ passing through $x$.
%if  and a point $x$ belongs to the ramification divisor if and only if the section $\Gamma_i'$ passing by $x$ lies completely in it.
So, if $\bar x  \in g^{-1}(y)$ is general, the sections $\Gamma_i'$ passing through $\bar x$ do not meet the ramification divisor $B$, hence  $B \cdot \Gamma_i'=0$ for every $i$.
\end{proof}

%%%%%%%%%%%%%%%%%%%%%%%%%%%%%%%%%%%%%%

\subsection{The Dynkin diagram of an FT-manifold} \label{ssec:dynkin}

By analogy with the notation for complete flag manifolds, introduced  in \ref{ssection:contraction} and \ref{ssection:flagmanifolds}, along the rest of the paper we will use the following notation for FT-manifolds.

\begin{notation}\label{not:ftmanifold}
Let $X$ be an FT-manifold of Picard number $n$; we will denote by $R_i$,  $i=1, \dots, n$ its extremal rays, and by $\Gamma_i$ a rational curve of minimal degree such that $[\Gamma_i] \in R_i$.
If $I$ is any subset of $D:=\{1, \dots, n\}$ we will denote by $R_I$ the extremal face spanned by the rays $R_i$ such that $i \in I$, by $\pi_I:X \to X_I$ the corresponding extremal contraction, by $T_I$ the relative tangent bundle and by $K_I:= -\det T_I$. We will also denote by $\pi^I:X\to X^I$ the contraction of the face $R^I$ spanned by the rays $R_i$ such that $i \in D \setminus I$.
For $I \subset J \subset D $ we will denote the
contraction of the extremal face $\pi_{I*}(R_J) \subset \Nu(X_I)$ by $\pi_{I,J}:X_I \to X_J$ or by
$\pi^{D \setminus I,D \setminus J}:X^{D \setminus I} \to X^{D\setminus J}$.
\end{notation}

The geometric interpretation of the Cartan matrix of a Lie algebra $\fg$ in terms of the intersection theory of $G/B$ given in Proposition \ref{prop:cartanint} suggests a natural way of associating a Cartan matrix with every FT-manifold:

\begin{definition}
Let $X$ be an FT-manifold of Picard number $n$. The {\it Cartan matrix} $M(X)$ associated with $X$ is the $n \times n$ matrix
defined by $M(X)_{ij}=-K_i \cdot \Gamma_j$.
\end{definition}

We will denote by $M_I(X)$ the $|I| \times |I|$ submatrix of $M(X)$ obtained from $M(X)$ by subtracting rows and columns corresponding to indices which are not in $I$.
The next  proposition shows that the fibers of a contraction
of an FT-manifold $X$  are FT-manifolds, whose Cartan matrices
are submatrices of
$M(X)$.

\begin{proposition}\label{prop:fibers} Let $X$ be an $FT$-manifold and consider any subset $I\subset D$. Let $\pi_I:X \to X_I$ be the contraction of a face $R_I$ and denote by $Z_I$  a fiber of $\pi_I$. Then $Z_I$ is an FT-manifold
whose Cartan matrix of $Z_I$ is $M_I(X)$.
\end{proposition}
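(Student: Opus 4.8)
The plan is to show that the contractions of $X$ which contract subfaces of $R_I$ descend to the fiber $Z_I$ and become there its elementary contractions, keeping their structure of smooth $\P^1$-fibrations, and then to read off the Cartan matrix of $Z_I$ by comparing relative canonical divisors and minimal curves across the inclusion $j\colon Z_I\hookrightarrow X$.

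First I would extract the linear-algebraic picture from Proposition \ref{prop:nef}. Since $Z_I$ is a fiber of the contraction $\pi_I$ of the Fano manifold $X$ with nef tangent bundle, part (2) shows that $Z_I$ is a smooth connected Fano manifold with nef tangent bundle; part (3) (and its proof) shows that $j_*\colon N_1(Z_I)\to N_1(X)$ has image $\ker\pi_{I*}$ and satisfies $j_*(\NE(Z_I))=R_I$, and $j_*$ is injective because $\dim N_1(Z_I)=\rho_{Z_I}=\rho_X-\rho_{X_I}=\dim\ker\pi_{I*}$. As $\NE(X)$ is simplicial (part (4)), the face $R_I$ is simplicial of dimension $|I|$, spanned by the rays $R_i$, $i\in I$; hence $\NE(Z_I)$ is simplicial of dimension $|I|$, with extremal rays the rays $R_i'$ ($i\in I$) characterized by $j_*(R_i')=R_i$.

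Next I would fix $i\in I$ and identify the elementary contraction of $Z_I$ attached to $R_i'$. Since $R_i\subset R_I$, the contraction $\pi_I$ factors as $X\xrightarrow{\pi_i}X_i\xrightarrow{\pi_{i,I}}X_I$; writing $Z_I=\pi_I^{-1}(x)$ and $W_i:=\pi_{i,I}^{-1}(x)$, we get $Z_I=\pi_i^{-1}(W_i)$. By Proposition \ref{prop:nef}(2), $X_i$ is a Fano manifold with nef tangent bundle, so its contraction $\pi_{i,I}$ is smooth and $W_i$ is a smooth connected variety. Hence $\pi_i|_{Z_I}\colon Z_I=\pi_i^{-1}(W_i)\to W_i$ is obtained from the smooth $\P^1$-fibration $\pi_i$ by base change, so it too is a smooth $\P^1$-fibration; it has connected fibers and contracts a curve $C\subset Z_I$ precisely when $j_*[C]\in R_i$, that is, when $[C]\in R_i'$, so it is the elementary contraction of $Z_I$ determined by $R_i'$. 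Since these $R_i'$ exhaust the extremal rays of $\NE(Z_I)$, the manifold $Z_I$ is an FT-manifold.

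Finally, for the Cartan matrix: base change identifies the relative tangent bundle of $\pi_i|_{Z_I}$ with $T_i|_{Z_I}$, so $K_i^{Z_I}=j^*K_i$ for each $i\in I$. For $j\in I$, a minimal curve $\Gamma_j^{Z_I}$ of $R_j'$ is contracted by $\pi_j|_{Z_I}$, hence lies in one of its fibers; that fiber is a $\P^1$, so it equals the irreducible curve $\Gamma_j^{Z_I}$, and it is at the same time a fiber of $\pi_j$ because $Z_I=\pi_j^{-1}(W_j)$. The same argument on $X$ shows $\Gamma_j$ is a fiber of $\pi_j$, so that $j_*[\Gamma_j^{Z_I}]=[\Gamma_j]$ in $N_1(X)$. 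The projection formula then yields, for all $i,j\in I$,
$$
M(Z_I)_{ij}=-K_i^{Z_I}\cdot\Gamma_j^{Z_I}=-j^*K_i\cdot\Gamma_j^{Z_I}=-K_i\cdot j_*[\Gamma_j^{Z_I}]=-K_i\cdot\Gamma_j=M(X)_{ij}=M_I(X)_{ij}.
$$
I expect the only delicate point to be the third paragraph, where one must genuinely invoke that $X$ is a Flag-Type manifold — so that the $\pi_i$ are smooth $\P^1$-fibrations — together with the smoothness of contractions from Proposition \ref{prop:nef}(2), in order to conclude that the induced contractions of $Z_I$ remain smooth $\P^1$-fibrations; everything else is routine bookkeeping with Proposition \ref{prop:nef} and the projection formula.
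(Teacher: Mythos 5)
Your proof is correct and follows essentially the same route as the paper: identify $\NE(Z_I)$ with the face $R_I$ via Proposition \ref{prop:nef}, show that the restrictions $(\pi_i)|_{Z_I}$, $i\in I$, are the elementary contractions of $Z_I$ and are smooth $\P^1$-fibrations, and read off the Cartan matrix from the fact that the relative tangent bundles $T_i$ restrict to those of $(\pi_i)|_{Z_I}$. The only difference is cosmetic: you obtain smoothness of $(\pi_i)|_{Z_I}$ by base change from $\pi_i$, while the paper invokes Proposition \ref{prop:nef} directly on $Z_I$; both are fine, and your explicit projection-formula computation just spells out what the paper leaves implicit.
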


\begin{proof} By  Proposition \ref{prop:nef} we already know that $Z_I$ is a Fano manifold with nef tangent bundle, such that $\rho_{Z_I}=\sharp I=\rho_X -\rho_{X_I}$, whose nef cone may be naturally identified with $R_I$.

Let  $\pi_i:X \to X_i$ be an elementary contraction such that $i \in I$. Its restriction $(\pi_i)|_{Z_I}$ is an extremal contraction of $Z_I$, which is smooth and of fiber type in view of Proposition \ref{prop:nef}, and whose fibers are obviously $\P^1$'s. Hence it is a smooth $\P^1$-fibration.
The last assertion follows from the fact that relative tangent bundles corresponding to elementary contractions $\pi_i$ indexed by  $i \in I$ restrict to the relative tangent bundles of the restriction $(\pi_i)|_{Z_I}$.
\end{proof}

We finish this section by stating one of the key ingredients of this paper: the fact that CP Conjecture holds for FT-manifolds of Picard number two.

\begin{theorem}\label{thm:pic2}
Let $X$ be an FT-manifold of Picard number two. Then $X$ is isomorphic to $G/B$ with $G$  a semisimple Lie group of type $A_1\times A_1$, $A_2$, $B_2$ or $G_2$.
\end{theorem}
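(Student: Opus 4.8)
The plan is to reduce the statement to the classification of Fano manifolds with nef tangent bundle of Picard number two that are $\P^1$-fibrations over another such manifold, which has been carried out in the works \cite{MOS} and \cite{Wa} cited in the introduction. Concretely, an FT-manifold $X$ of Picard number two has exactly two extremal rays $R_1, R_2$, and each elementary contraction $\pi_i : X \to X_i$ is a smooth $\P^1$-fibration; moreover, by Proposition \ref{prop:nef}, the base $X_i$ is itself a Fano manifold with nef tangent bundle, necessarily of Picard number one. So $X$ carries two distinct smooth $\P^1$-fibration structures onto Fano manifolds of Picard number one with nef tangent bundle.

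First I would record that the bases $X_1$ and $X_2$ are del Pezzo in a strong sense: being Fano of Picard number one with nef tangent bundle, and since in low dimensions (or under the relevant partial classifications) such manifolds that occur as bases of a $\P^1$-fibration from a manifold with nef tangent bundle are very restricted. Then I would invoke directly the classification theorems of \cite{MOS} and \cite{Wa}: a Fano manifold with nef tangent bundle admitting two smooth $\P^1$-fibration structures is one of $\P^1 \times \P^1$, the flag $F(1,2;\C^3)$ (the full flag of $\sl_3$, i.e. $\Fl(A_2)$), the flag manifold of type $B_2=C_2$ (the incidence variety of points and lines in a smooth quadric threefold, equivalently $\Fl(B_2)$), or the flag manifold of type $G_2$ (the $5$-dimensional $G_2$-flag $\Fl(G_2)$). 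These are exactly $G/B$ for $G$ of type $A_1\times A_1$, $A_2$, $B_2$, $G_2$, which is the asserted list. The Cartan-matrix bookkeeping of \ref{ssec:dynkin} then matches: the intersection matrix $(-K_i\cdot \Gamma_j)$ computed on each of these four manifolds reproduces the Cartan matrix of the corresponding rank-two root system, which is consistent with Proposition \ref{prop:cartanint}.

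The genuinely nontrivial input is the classification itself, i.e. the content of \cite{MOS} and \cite{Wa}; the argument here is essentially to assemble that input. For completeness I would sketch why no other cases arise: one analyzes the second $\P^1$-fibration $\pi_2$ restricted to a fiber of $\pi_1$ (a $\P^1$), and uses the structure of $X$ as $\P(\cE)$ for a rank-two bundle $\cE$ on $X_1$ (here one uses that $X_1$ has trivial Brauer group, being simply connected of Picard number one — cf. Remark \ref{rem:P1bdl}) to constrain $\cE$; nefness of $T_X$ and the Fano condition force $X_1$ to be a projective space or a quadric of small dimension, and a Chern class / splitting-type computation pins down $\cE$ up to twist, yielding exactly the four homogeneous models.

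The main obstacle, were one to prove Theorem \ref{thm:pic2} from scratch rather than cite \cite{MOS,Wa}, is precisely this last step: ruling out, among all rank-two bundles $\cE$ on Fano manifolds $X_1$ of Picard number one with nef tangent bundle, every $\P(\cE)$ that is not one of the four flag manifolds — equivalently, showing that the nef tangent bundle hypothesis on $\P(\cE)$ is so rigid that $X_1$ must have dimension $\le 3$ and $\cE$ must be (a twist of) one of a short explicit list. Since this is exactly what \cite{MOS} and \cite{Wa} establish, the proof as presented will simply quote those results and verify that the resulting four manifolds are the complete flag manifolds of the rank-two semisimple groups, with Cartan matrices as in Proposition \ref{prop:cartanint}.
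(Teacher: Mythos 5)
There is a genuine gap, and it sits exactly where you try to pass from ``smooth $\P^1$-fibration'' to ``$\P^1$-bundle''. The classification results you want to quote are not stated for manifolds with two smooth $\P^1$-fibration structures: \cite{Wa} (Theorem 1.1) classifies $\P^1$-\emph{bundles}, i.e.\ projectivizations of rank-two vector bundles, admitting another smooth morphism of relative dimension one, and \cite{MOS} likewise works with rank-two bundles. Your justification for upgrading the fibration to a bundle --- that $X_1$ has trivial Brauer group ``being simply connected of Picard number one'' --- is not valid: simple connectedness does not kill the Brauer obstruction (which for a Fano base lives in the torsion of $H^3(X_1,\Z)$), and no such vanishing is known for Fano manifolds with nef tangent bundle of Picard number one; indeed Remark \ref{rem:P1bdl} of the paper is careful to claim the equivalence only over bases with trivial Brauer group, e.g.\ curves. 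So as written your argument assumes precisely the nontrivial reduction.

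The paper closes this gap by a purely numerical argument which is absent from your proposal. Writing $\nu_j=K_j\cdot\Gamma_{j'}$, $\mu_j=H_j\cdot\Gamma_{j'}$, one first disposes of the case $\nu_1\nu_2=0$ ($X\cong\P^1\times\P^1$), then proves a Chern--Wu type relation $K_j^2\equiv\Delta_jH_j^2$ in codimension two, shows $\Delta_j<0$, and deduces from the vanishing of $\bigl(-K_1+\tfrac{\nu_1}{\mu_1}H_1\bigr)^{m+1}$ that $\dim X=m+1\in\{3,4,6\}$ (this is where \cite[Proposition 4.4]{MOS} enters --- a numerical statement, not a classification of fibrations) and finally that $\nu_1\nu_2=4\cos^2\bigl(\tfrac{\pi}{m+1}\bigr)\in\{1,2,3\}$. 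Hence some $\nu_j=1$, which exhibits a unisecant divisor for one of the fibrations; only at this point is $X$ known to be a genuine $\P^1$-bundle, and only then can \cite[Theorem 1.1]{Wa} be applied to conclude that $X$ is one of the four rank-two flag manifolds. Your final ``sketch'' (splitting-type analysis of $\cE$ on $X_1$) again presupposes the $\P(\cE)$ structure, so it does not repair the gap; to make your route work you would either have to prove the Brauer vanishing for these bases or reproduce an argument like the unisecant-divisor computation above.
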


%%%%%%%%%%%%%%%%%%%%%%%%%%%%%%%%%%%

Let $X$ be an $(m+1)$-dimensional Fano manifold of Picard number two having two $\P^1$-fibrations $\pi_1:X\to X_1$, $\pi_2:X\to X_2$.
Let $H_i$ be the pull-back of the ample generator of $X_i$, $i=1,2$ and set $\nu_1:=K_1\cdot \Gamma_2$, $\mu_1:=H_1\cdot \Gamma_2$, $\nu_2:=K_2\cdot \Gamma_1$, $\mu_2:=H_2\cdot \Gamma_1$, so that one may easily verify that:
\begin{equation}\label{eq:basechange}
\begin{pmatrix}-K_1\\H_1\end{pmatrix}
 =A\begin{pmatrix}-K_2\\H_2\end{pmatrix},\mbox{ where~}
A:=\begin{pmatrix}\vspace{0.2cm}
-\dfrac{\nu_1}{2}&\dfrac{4-\nu_1\nu_2}{2\mu_2}\\
\dfrac{\mu_1}{2}&\dfrac{\mu_1\nu_2}{2\mu_2}
\end{pmatrix}.
\end{equation}
Note that $\nu_j\geq 0$, $\mu_j> 0$, for $j=1,2$. The following observation will allows us to skip considering the case $\nu_1\nu_2=0$ within the arguments below.

\begin{lemma}\label{lem:trivial}
With the same notation as above, assume that $\nu_j=0$ for some $j$. Then $X\cong\P^1\times\P^1$.
\end{lemma}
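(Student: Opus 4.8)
The plan is to extract, from the single numerical equality $\nu_1=0$, enough rigidity to force $X$ to be a surface, and then to fall back on the classification of Fano surfaces of Picard number two. By symmetry assume $\nu_1=0$. Then the relative tangent bundle $T_1$ of $\pi_1$, which is a line bundle since $\pi_1$ is a $\P^1$-fibration, satisfies $T_1\cdot\Gamma_1=2$ and $T_1\cdot\Gamma_2=-\nu_1=0$.

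First I would observe that, since $\Gamma_2$ is a fibre of $\pi_2$, the restriction of $T_1$ to every fibre of $\pi_2$ is $\cO_{\P^1}$, so by base change $L:=(\pi_2)_*T_1$ is a line bundle on $X_2$ and $T_1\cong\pi_2^*L$. Composing $\pi_2^*L=T_1\hookrightarrow T_X$ with $d\pi_2\colon T_X\to\pi_2^*T_{X_2}$ and using $(\pi_2)_*\cO_X=\cO_{X_2}$, this composition is $\pi_2^*\phi$ for a morphism $\phi\colon L\to T_{X_2}$; moreover $\phi\neq0$, for otherwise $T_1\subseteq\ker d\pi_2=T_2$, hence $T_1=T_2$ and $\pi_1=\pi_2$, which is absurd. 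As $\phi$ is a nonzero map from a line bundle into a torsion-free sheaf, $L$ is an invertible subsheaf of $T_{X_2}$; and $\pi_2^*L\cdot\Gamma_1=T_1\cdot\Gamma_1=2>0$ while $\Gamma_1$ is not $\pi_2$-contracted, so $L$ has positive degree on the curve $\pi_2(\Gamma_1)$, and since $\rho_{X_2}=1$ this forces $L$ to be ample.

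Now by Wahl's theorem $X_2\cong\P^m$, where $m=\dim X_2$, and $L\cong\cO_{\P^m}(c)$ with $c\ge1$. I claim $m=1$. Suppose $m\geq2$; then $\Hom(\cO_{\P^m}(c),T_{\P^m})=H^0(T_{\P^m}(-c))$ vanishes for $c\ge2$, so $c=1$ and $\phi$ corresponds to a nonzero vector $v\in\C^{m+1}$, the associated twisted vector field on $\P^m$ vanishing exactly at the point $[v]$. For every fibre $\Gamma_1$ of $\pi_1$ the differential $d(\pi_2|_{\Gamma_1})$ carries $T_{\Gamma_1}=T_1|_{\Gamma_1}$ into the line spanned by $\phi$, so the image $\pi_2(\Gamma_1)$ is an integral curve of this vector field, i.e. a line through $[v]$; in particular $[v]\in\pi_2(\Gamma_1)$. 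Hence every fibre of $\pi_1$ meets the fixed curve $F:=\pi_2^{-1}([v])\cong\P^1$; as $F$ is not $\pi_1$-contracted, the fibres of $\pi_1$ meeting $F$ are parametrised by the irreducible curve $\pi_1(F)\subset X_1$, so $X_1=\pi_1(F)$ is a curve. But then $\dim X=\dim X_1+1=2$, forcing $m=\dim X_2=1$, a contradiction.

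Therefore $m=1$, so $X$ is a smooth Fano surface of Picard number two whose two elementary contractions $\pi_1,\pi_2$ are $\P^1$-fibrations; in particular $X_1$ is a smooth Fano curve, hence $X_1\cong\P^1$, and $\pi_1$ exhibits $X$ as a $\P^1$-bundle over $\P^1$, that is, a Hirzebruch surface $\mathbb F_e$. Among these, only $\mathbb F_0\cong\P^1\times\P^1$ is Fano with both elementary contractions of fibre type ($\mathbb F_1$ has a birational contraction, and $\mathbb F_e$, $e\ge2$, is not Fano), so $X\cong\P^1\times\P^1$. (Equivalently, once $X_1\cong\P^1$ one may apply Lemma~\ref{lem:maxevid} to write $X\cong\P^1\times Z$ with $\rho_Z=1$, and comparing $\pi_1$ with the two projections forces $Z\cong\P^1$.) The main obstacle is the dimension reduction of the third paragraph: translating $\nu_1=0$ into $T_1=\pi_2^*L$ and then using Wahl's theorem to produce the pencil of lines through $[v]$ on $X_2$ is what makes $\dim X=2$ unavoidable; everything else is formal.
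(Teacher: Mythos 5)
Your proof is correct, but it follows a genuinely different route from the paper's. The paper localizes: it takes the ruled surface $S:=\pi_1^{-1}(\pi_1(\Gamma_2))$, observes that $\Gamma_2$ is a minimal section (unsplitness of its family) with $K_1\cdot\Gamma_2=0$, so that $S\cong\P^1\times\P^1$, and then concludes $X=S$ because $X$ is rationally chain connected by curves of the two families, each of which is trapped in $S$ once it meets it; the fact that $\dim X=2$ falls out at the very end. You instead globalize the same numerical input: $T_1\cdot\Gamma_2=0$ gives $T_1\cong\pi_2^*L$, the composition with $d\pi_2$ yields a nonzero map $L\to T_{X_2}$ with $L$ ample (here $\rho_{X_2}=1$), Wahl's theorem identifies $X_2\cong\P^m$, and the induced foliation by lines through the zero $[v]$ of the section of $T_{\P^m}(-1)$ forces every $\pi_1$-fibre to meet $\pi_2^{-1}([v])$, hence $\dim X_1=1$ and $\dim X=2$, after which the Hirzebruch classification finishes. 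The paper's argument is more elementary and self-contained (ruled surfaces plus chain connectedness, no appeal to Wahl), while yours trades that for a heavier external theorem but gains structural by-products (the pull-back structure of $T_1$ and the identification of $X_2$) and avoids the slightly delicate points the paper glosses over (why $\Gamma_2$ is a section and why deformations of $\Gamma_2$ meeting $S$ stay in $S$). Two small caveats: your smooth-fibration hypotheses (needed for $X_2$ smooth, Grauert base change, and Wahl) are indeed part of the setup, so that is fine; but your parenthetical alternative ending via Lemma~\ref{lem:maxevid} is not available verbatim, since that lemma assumes nefness of the tangent bundle, which is not among the hypotheses of this lemma's setup --- your main Hirzebruch-surface conclusion, however, does not need it.
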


\begin{proof}
We write the proof for $j=1$. Consider the ruled surface $S:=\pi_1^{-1}(\pi_1(\Gamma_2))\supset \Gamma_2$; since the family of deformations of $\Gamma_2$ is unsplit, $\Gamma_2$ must be a minimal section of $S$. Hence, $\nu_1=0$ implies that $S\cong\P^1\times\P^1$. In particular, every curve $\Gamma_i$ meeting $S$ is contained in $S$. Since, by hypothesis, $X$ has Picard number two and so it is rationally chain connected with respect to the curves in the families of deformations of $\Gamma_1$ and $\Gamma_2$, we conclude that $X=S$.
\end{proof}

We will make use later of the following Chern-Wu relation on smooth $\P^1$-fibrations:

\begin{lemma}\label{lem:codim2}
With the same notation as above, for every $j=1,2$ there exists $\Delta_j\in\Q$ such that $K_j^2=\Delta_jH_j^2$ modulo numerical equivalence.
\end{lemma}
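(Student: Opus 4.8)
The plan is to prove that $K_j^2$ is the pull-back via $\pi_j$ of a cycle class on $X_j$, and then to conclude using that $X_j$ has Picard number one.

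\emph{Step 1: $K_j^2\in\pi_j^*N^2(X_j)_\Q$.} Fix $j$, set $\pi:=\pi_j$, $X':=X_j$ and $\xi:=-\tfrac12 K_j\in N^1(X)_\Q$, so that $\xi\cdot\Gamma_j=1$. Since $\pi$ is a smooth $\P^1$-fibration, the Leray--Hirsch theorem (equivalently, the projective bundle formula, applied after an \'etale base change on $X'$ that trivialises the Brauer obstruction of Remark \ref{rem:P1bdl}) gives that $N^\bullet(X)_\Q$ is a free $N^\bullet(X')_\Q$-module on $\{1,\xi\}$; hence there are unique $a\in N^1(X')_\Q$ and $b\in N^2(X')_\Q$ with $\xi^2=\pi^*a\cdot\xi+\pi^*b$. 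Pushing forward and using $\pi_*1=0$, $\pi_*\xi=1$ yields $a=\pi_*(\xi^2)=\tfrac14\,\pi_*(K_j^2)$. Now $\pi_*(K_j^2)=0$: this is a Chern--Wu (Grothendieck) relation for the $\P^1$-fibration $\pi$, which can be checked by Grothendieck--Riemann--Roch applied to $\pi$, using that $R\pi_*\omega_{X/X'}$ is the structure sheaf of $X'$ placed in cohomological degree one, so that the degree-one part of $\pi_*\!\big(e^{K_{X/X'}}\,\mathrm{td}(T_{X/X'})\big)$, which equals $\tfrac1{12}\,\pi_*(K_{X/X'}^2)$, must vanish. (Alternatively, over an \'etale cover where $X=\P(\cE)$, with $\xi_0$ the relative hyperplane class one has $\xi_0^2=\pi^*c_1(\cE)\,\xi_0-\pi^*c_2(\cE)$ and $K_{X/X'}=-2\xi_0+\pi^*c_1(\cE)$, whence at once $K_{X/X'}^2=\pi^*\big(c_1(\cE)^2-4c_2(\cE)\big)$.) Thus $a=0$ and $K_j^2=4\pi^*b$ lies in $\pi_j^*N^2(X_j)_\Q$.

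\emph{Step 2: conclusion.} As $\pi_j$ is an elementary contraction of the Picard-number-two manifold $X$, we have $\rho_{X_j}=1$; let $h$ be the ample generator of $N^1(X_j)_\Q$, so $H_j=\pi_j^*h$ and $H_j^2=\pi_j^*(h^2)$. Writing $K_j^2=\pi_j^*\gamma$ with $\gamma\in N^2(X_j)_\Q$ as above, the relation $K_j^2=\Delta_j H_j^2$ is equivalent to $\gamma=\Delta_j\,h^2$ in $N^2(X_j)_\Q$, and this holds with $\Delta_j=\gamma/h^2$ provided $N^2(X_j)_\Q$ is one-dimensional (note $h^2\neq 0$ there, $h$ being ample); if $N^2(X_j)_\Q=0$ the statement is trivial. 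One-dimensionality is automatic when $\dim X\le 3$, since then $\dim X_j\le 2$ and $N^2$ of a variety of dimension $\le 2$ is $0$ or equals $N_0\cong\Q$; in general, $X$ being an FT-manifold, Theorem \ref{thm:pic2} identifies it with $G/B$ for a rank-two $G$, so $X_j$ is one of $\P^n$, an odd-dimensional quadric, or the five-dimensional $G_2$-manifold, each of which has $N^2\cong\Q$.

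\emph{Main obstacle.} Step 1 is formal and works for any smooth $\P^1$-fibration. The real content is Step 2, i.e. knowing that the base $X_j$ carries no codimension-two numerical class independent from the square of its hyperplane class; beyond the surface case this is where the homogeneity of Picard-number-two FT-manifolds enters.
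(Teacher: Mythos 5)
Your Step 1 is fine: the Grothendieck/Chern--Wu relation $\pi_{j*}(K_j^2)=0$ for a smooth $\P^1$-fibration indeed shows $K_j^2\in\pi_j^*(N^2(X_j)_\Q)$, and this is essentially the same content as the paper's argument, which checks it by restricting to the ruled surfaces $\pi_j^{-1}(C)$ over curves $C\subset X_j$ and using $(K_j)^2_{|S}=0$ there.

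The gap is in Step 2, and it is a circularity, not just a missing detail. You invoke Theorem \ref{thm:pic2} to identify $X_j$ (as $\P^n$, a quadric, or the $G_2$-variety) and thereby get $N^2(X_j)_\Q\cong\Q$. But in the paper Lemma \ref{lem:codim2} is an ingredient of the proof of Theorem \ref{thm:pic2}: the relation $K_j^2=\Delta_jH_j^2$ is exactly what feeds Lemmas \ref{lem:discriminant} and \ref{lem:235} and the computation leading to (\ref{eq:cos}) in the proof of that theorem, so the classification of Picard-number-two FT-manifolds is not available at this stage. For the same reason you cannot bound $\dim X$ (your $\dim X\le 3$ fallback comes nowhere near the general case, and the bound $m=2,3,5$ of Lemma \ref{lem:235} is itself downstream of this lemma). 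The paper's way out is to quote \cite[Proposition~2.3]{Wa}, which proves directly that $\pi_j^*(N^2(X_j)_\Q)$ is one-dimensional and generated by $H_j^2$, using only the given structure --- a Fano manifold of Picard number two carrying a second smooth $\P^1$-fibration, so that $X_j$ is swept out by the images of the curves $\Gamma_{3-j}$ --- with no classification input. You correctly flag this one-dimensionality as the real content of the lemma, but the justification you offer for it presupposes the theorem the lemma is meant to help prove; to repair the argument you would need to reprove (or cite) the statement of \cite[Proposition~2.3]{Wa} independently.
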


\begin{proof}
We write the proof for $j=1$. Let $N^2(X_1)_{\Q}$ be the $\Q$-vector space of codimension $2$ cycles on $X_1$ modulo numerically equivalence. Following verbatim the proof of \cite[Proposition~2.3]{Wa},
we may show that $\pi_1^*(N^2(X_1)_{\Q})$ is isomorphic to $\Q$, and it is generated by $H_1^2$. Hence it suffices to show that $K_1^2\in\pi_1^*(N^2(X_1)_\Q)$.
In order to see this, we may take a curve $C\subset X_1$ and consider the surface $S=\pi_1^{-1}(C)$, which is a $\P^1$-fibration, hence a $\P^1$-bundle. Then the statement is equivalent to $(K_1)_{|S}^2=0$, which is known to be true for every $\P^1$-bundle over a curve.
\end{proof}

\begin{lemma}\label{lem:discriminant}
With the same notation as above, $\Delta_j<0$ for $j=1,2$, unless $X\cong\P^1\times\P^1$.
\end{lemma}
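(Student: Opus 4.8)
The plan is to compute $\Delta_j$ explicitly in terms of the numerical invariants $\nu_1,\nu_2,\mu_1,\mu_2$ introduced above, and then to read off the sign. By Lemma \ref{lem:trivial} we may assume $\nu_1,\nu_2>0$, so that all the quantities involved are strictly positive. Writing $j=1$ for concreteness, the idea is to express $K_1^2$ and $H_1^2$ as numerical classes of $0$-cycles (i.e. as numbers, since $\dim X = m+1$ and we are intersecting $m+1$ divisor classes) by combining the Chern--Wu relation of Lemma \ref{lem:codim2} with the relation (\ref{eq:basechange}) between the two natural bases $\{-K_1,H_1\}$ and $\{-K_2,H_2\}$ of $N^1(X)_\Q$. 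Concretely, I would intersect both members of $K_1^2=\Delta_1 H_1^2$ with a suitable monomial of total degree $m-1$ in $H_1,H_2$ (for instance $H_2^{m-1}$, using that $H_1^{m+1-k}H_2^k$ can be evaluated via the $\P^1$-fibration structure and $\mu_2=H_2\cdot\Gamma_1$), thereby turning the codimension-two identity into a numerical identity that pins down $\Delta_1$.

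The key computational input is the following: on a smooth $\P^1$-fibration $\pi_1:X\to X_1$ with $K_1$ the relative canonical divisor, one has $\pi_{1*}(K_1^2)=0$ (this is the content of the proof of Lemma \ref{lem:codim2}, the vanishing $(K_1)_{|S}^2=0$ on a ruled surface over a curve), while $\pi_{1*}(K_1\cdot H_2)$ and $\pi_{1*}(H_2^2)$ are controlled by the degrees $\nu_2=K_2\cdot\Gamma_1$ and $\mu_2=H_2\cdot\Gamma_1$ of these divisors on the fibers $\Gamma_1$ of $\pi_1$. Since $K_1=-2C_1+(\text{pullback})$ on each fiber $C_1=\Gamma_1$ is forced by $-K_1\cdot\Gamma_1=2$, a short Grothendieck--Riemann--Roch / projection-formula computation on $\pi_1$ should give $\Delta_1$ as an explicit rational function of the four invariants. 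The same computation performed with $\pi_2$ (equivalently, using (\ref{eq:basechange}) to rewrite everything in terms of $-K_1,H_1$) yields $\Delta_2$.

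Once $\Delta_1$ is written out, the final step is to check its sign. I expect $\Delta_1$ to come out proportional to something like $\nu_1^2-4$ up to a positive factor, or more precisely to a quantity that is negative exactly in the range $\nu_1\nu_2\in\{1,2,3\}$ — the arithmetic constraint that will later force $\cD(X)$ to be one of the rank-two Cartan matrices. Indeed, if $\Delta_1\geq 0$ then $K_1^2$ and $H_1^2$ would lie on the same side numerically, which (combined with $-K_1=H_1^{\otimes\mu_1}\otimes(\text{something})$ being a sum of an effective and a nef part on the $\P^1$-fibration) would force the normalized bundle $\pi_{1*}(-K_1/\ \!2)$-type invariant to vanish, collapsing $X$ to a product $\P^1\times\P^1$; this is the only escape case and it is precisely the exception allowed in the statement.

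The main obstacle will be the bookkeeping in the intersection-theoretic computation: one must carefully keep track of which intersection numbers of $H_1,H_2,K_1,K_2$ vanish on the fibers versus on the base, and invert the $2\times 2$ change-of-basis matrix $A$ of (\ref{eq:basechange}) (whose determinant is $-\mu_1\nu_2/(2\mu_2)\cdot$ something, in any case nonzero since $\mu_j,\nu_j>0$) to pass between the two descriptions of $\Delta_j$ without sign errors. The conceptual content — that a smooth $\P^1$-fibration forces a Chern--Wu relation with a \emph{negative} discriminant unless the total space splits — is standard for ruled surfaces; the work here is to propagate it through the Picard-number-two geometry using only the nefness of $T_X$ (to guarantee smoothness of both contractions and that $\Gamma_1,\Gamma_2$ are free of minimal degree) rather than any homogeneity assumption.
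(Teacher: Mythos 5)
There is a genuine gap, and it lies in the two steps you leave as ``computational'': (a) that $\Delta_1$ can be written as an explicit rational function of $\nu_1,\nu_2,\mu_1,\mu_2$ by projection-formula/GRR bookkeeping on $\pi_1$, and (b) that one can then ``read off the sign''. For (a): the pushforwards you would need, such as $\pi_{1*}(K_1\cdot H_2)$ and $\pi_{1*}(H_2^2)$, are divisor classes on $X_1$ that are \emph{not} determined by the fiber degrees $\nu_2,\mu_2$; they encode genuinely new data (for $X=\P(\cE)$ they involve $c_1$ of the relevant twist of $\cE$), so the proposed computation produces extra unknowns, not a closed formula in the four invariants. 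Indeed $\Delta_1$ cannot be a function of $\nu_1,\nu_2,\mu_1,\mu_2$ alone: a posteriori one has $\Delta_1=-(\nu_1/\mu_1)^2\tan^2\bigl(\pi/(m+1)\bigr)$, which depends on $\dim X$. For (b): your expected formula ``$\Delta_1\sim\nu_1^2-4$'' is false (e.g.\ in the $G_2$ flag one of the $\nu_j$ equals $3$ while $\Delta_j<0$), and the fallback ``negative exactly when $\nu_1\nu_2\in\{1,2,3\}$'' is circular, because the restriction $\nu_1\nu_2\in\{1,2,3\}$ is only obtained later (Lemma \ref{lem:235} and the proof of Theorem \ref{thm:pic2}) \emph{using} the present lemma; at this stage nothing bounds $\nu_1\nu_2$. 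The closing paragraph about the ``escape case'' is a heuristic, not an argument.

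The idea your plan is missing is the one the paper actually uses, and it is global rather than fiberwise: from (\ref{eq:basechange}) one sees that $-K_1+\frac{\nu_1}{\mu_1}H_1$ is numerically proportional to $H_2$, which is nef and \emph{not big}, so its $(m+1)$-st self-intersection vanishes. Expanding $\bigl(-K_1+\frac{\nu_1}{\mu_1}H_1\bigr)^{m+1}=0$ by means of the Chern--Wu relation $K_1^2\equiv\Delta_1H_1^2$ of Lemma \ref{lem:codim2}, together with $H_1^{m+1}=0$ and $K_1\cdot H_1^m\neq 0$, one finds that $\Delta_1=0$ forces $\nu_1=0$, and $\Delta_1>0$ forces $\bigl(\frac{\nu_1}{\mu_1}+\sqrt{\Delta_1}\bigr)^{m+1}=\bigl(\frac{\nu_1}{\mu_1}-\sqrt{\Delta_1}\bigr)^{m+1}$, again only possible if $\nu_1=0$; in either case Lemma \ref{lem:trivial} gives $X\cong\P^1\times\P^1$. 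So the sign of $\Delta_1$ is extracted from a single vanishing of a top self-intersection coming from the second fibration, with no need to (and no possibility to) compute $\Delta_1$ itself from the four degrees. Your write-up correctly assembles the ingredients (the two fibrations, Lemma \ref{lem:codim2}, the change of basis, Lemma \ref{lem:trivial} for $\nu_j=0$) but does not identify this mechanism, and without it the proof does not close.
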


\begin{proof}
We write the proof for $j=1$.
Since $-K_1+\frac{\nu_1}{\mu_1}H_1$ is numerically proportional to $H_2$, it is nef and not big, hence its $(m+1)$th self-intersection is zero. If $\Delta_1=0$, then
$$
0=\left(-K_1+\frac{\nu_1}{\mu_1}H_1\right)^{m+1}=
-(m+1)(K_1 \cdot H_1)^m\left(\frac{\nu_1}{\mu_1}\right)^m.
$$
It follows that $\nu_1=0$, and we conclude by Lemma \ref{lem:trivial} above.

If $\Delta_1>0$, then using the Chern-Wu relation $K_1^2=\Delta_1H_1^2$, we may write:
$$
0=\left(-K_1+\frac{\nu_1}{\mu_1}H_1\right)^{m+1}=\dfrac{\big(\frac{\nu_1}{\mu_1}+\sqrt{\Delta_1}\big)^{m+1}- \big(\frac{\nu_1}{\mu_1}-\sqrt{\Delta_1}\big)^{m+1}}{2\sqrt{\Delta_1}}(-K_1\cdot H_1^m).
$$
Since $K_1H_1^m\neq 0$, we must have $\big(\frac{\nu_1}{\mu_1}+\sqrt{\Delta_1}\big)^{m+1}= \big(\frac{\nu_1}{\mu_1}-\sqrt{\Delta_1}\big)^{m+1}$. Since $\Delta_1>0$, this is not possible unless $m$ is odd and $\nu_1=0$, and we conclude by Lemma \ref{lem:trivial} again.
\end{proof}

In the case $\Delta_j<0$, let us set $b_j:=+\sqrt{-\Delta_j}$ and $z_j:=\frac{\nu_j}{\mu_j}+ib_j\in\C$, $j=1,2$.

\begin{lemma}\label{lem:235}
With the same notation as above, assume that $X\not\cong \P^1\times\P^1$. Then $(z_j)^{m+1}\in\R\mbox{ for }j=1,2$. Moreover  $m=2,3,5$.
\end{lemma}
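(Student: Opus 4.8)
The plan is to exploit the base-change matrix $A$ from \eqref{eq:basechange} together with the Chern--Wu relations $K_j^2=\Delta_j H_j^2$ to derive a strong arithmetic constraint. First I would observe that the two $\P^1$-fibrations $\pi_1,\pi_2$ give us the numerical relation $-K_2+\tfrac{\nu_2}{\mu_2}H_2$ is proportional to $H_1$, and symmetrically $-K_1+\tfrac{\nu_1}{\mu_1}H_1$ is proportional to $H_2$; so each $H_i$ is, up to scalar, a linear combination of $K_j$ and $H_j$ for $j\neq i$. The idea is to compute $(-K_1)\cdot H_1^m$, $H_1^{m+1}$, and the mixed intersection numbers by passing to the $X_2$-side via $A$, using only the two independent relations $K_2^2=\Delta_2 H_2^2$ and the vanishing $(-K_2+\tfrac{\nu_2}{\mu_2}H_2)^{m+1}=0$ (the latter because this class is nef and not big, as in the proof of Lemma \ref{lem:discriminant}). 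Since $z_2=\tfrac{\nu_2}{\mu_2}+ib_2$ with $b_2=\sqrt{-\Delta_2}$, the vanishing $(-K_2+\tfrac{\nu_2}{\mu_2}H_2)^{m+1}=0$ together with $K_2^2=-b_2^2 H_2^2$ says, formally, that $z_2^{m+1}=\overline{z_2}^{\,m+1}$ after substituting $K_2\mapsto \pm i b_2 H_2$; concretely, expanding the binomial and collecting the $H_2^{m+1}$-coefficient gives that $\operatorname{Im}(z_2^{m+1})=0$, i.e. $z_2^{m+1}\in\R$, and symmetrically $z_1^{m+1}\in\R$. This is the content of the first assertion.

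For the bound $m\in\{2,3,5\}$, the key extra input is that $A$ must preserve the lattice of integral divisor classes (or at least that the entries of $A$ and its inverse satisfy integrality/positivity constraints coming from $\nu_j\in\Z_{\geq 0}$, $\mu_j\in\Z_{>0}$, and $\nu_1\nu_2\neq 0$ by Lemma \ref{lem:trivial}). Writing $z_j=\tfrac{\nu_j}{\mu_j}+ib_j$ in polar form $z_j=r_j e^{i\theta_j}$ with $\theta_j\in(0,\pi)$, the condition $z_j^{m+1}\in\R$ forces $(m+1)\theta_j\in\pi\Z$, so $\theta_j=\tfrac{k_j\pi}{m+1}$ for some integer $1\le k_j\le m$. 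I would then feed this back into the relation $A z_2 \leftrightarrow z_1$ coming from \eqref{eq:basechange}: the matrix $A$ has entries expressible in $\nu_1,\nu_2,\mu_1,\mu_2$, its determinant is $\det A=\tfrac{\mu_1}{\mu_2}$ (one checks $-\tfrac{\nu_1}{2}\cdot\tfrac{\mu_1\nu_2}{2\mu_2}-\tfrac{4-\nu_1\nu_2}{2\mu_2}\cdot\tfrac{\mu_1}{2}=-\tfrac{\mu_1}{\mu_2}$), and the condition that $A$ carries the line $\R z_2$ (inside $\NU(X_2)\otimes\C$) to the line $\R z_1$, combined with $\theta_j=\tfrac{k_j\pi}{m+1}$ and the positivity of $\nu_j,\mu_j$, restricts the possible pairs $(\theta_1,\theta_2)$ to finitely many, and forces $m+1\in\{3,4,6\}$. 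I expect this will reduce, after clearing denominators, to an equation of the shape $\cos\theta_1\cdot(\text{positive rational})=\pm\cos\theta_2\cdot(\text{positive rational})$ plus a sine relation, whose only solutions with $\theta_j$ a rational multiple of $\pi$ and the sign/positivity constraints satisfied are $m+1=3,4,6$ — this is the same arithmetic that underlies the crystallographic restriction on Weyl groups of rank-two root systems, which is no coincidence given Theorem \ref{thm:pic2}.

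The main obstacle I anticipate is organizing the intersection-theoretic bookkeeping cleanly: one has three numerical quantities on $X_1$ (namely $K_1^{m+1}$, $K_1^m\cdot H_1$, and $H_1^{m+1}$, the rest being determined by $K_1^2=\Delta_1 H_1^2$) and the base change $A$ must be used to relate these to the analogous quantities on $X_2$ without introducing unwarranted assumptions — in particular one must be careful that $H_1^{m+1}>0$, $(-K_1)\cdot H_1^m>0$, and that the ``not big'' vanishing is applied to the correct nef class on each side. Once the substitution $K_j\mapsto i b_j H_j$ is justified at the level of the relevant top-degree intersection numbers (which is legitimate precisely because every monomial in $K_j,H_j$ of degree $m+1$ is a rational multiple of $H_j^{m+1}$ via the Chern--Wu relation), the statement $z_j^{m+1}\in\R$ is immediate, and the rest is the finite case-check excluding $m\ge 4$ except $m=5$ and, implicitly, $m=4$ (which must be ruled out because $m+1=5$ is not a crystallographic order). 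I would present the $m=4$ exclusion as the delicate point, handled by showing that $\theta_j=\tfrac{k_j\pi}{5}$ is incompatible with the rationality of the entries of $A$.
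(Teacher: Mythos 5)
Your route to the first assertion is essentially the paper's (expand $0=\bigl(-K_j+\tfrac{\nu_j}{\mu_j}H_j\bigr)^{m+1}$ using the Chern--Wu relation $K_j^2=\Delta_jH_j^2$), but the bookkeeping you give to justify it is wrong in a way that matters. It is not true that every degree-$(m+1)$ monomial in $K_j,H_j$ reduces to a rational multiple of $H_j^{m+1}$: monomials with an odd power of $K_j$ reduce to multiples of $K_j\cdot H_j^{m}$. The extraction of $\operatorname{Im}(z_j^{m+1})=0$ works precisely because $H_j^{m+1}=0$ ($H_j$ is pulled back from the $m$-dimensional base $X_j$), so only the odd-$K_j$ part survives, and because $-K_j\cdot H_j^{m}>0$ (fibers of $\pi_j$ have anticanonical degree $2$ and $H_j^{m}$ is a positive multiple of the fiber class), so one may divide by it. You instead list ``$H_1^{m+1}>0$'' among the facts to be careful about, which is false and shows the two inputs your coefficient extraction actually needs are missing; this is fixable, but as written the step does not close.

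The second assertion contains the genuine gap. From $\operatorname{Im}(z_j^{m+1})=0$ you only get $\arg z_j=k_j\pi/(m+1)$ for some $1\le k_j\le m$, and rationality of $\nu_j/\mu_j$ and $\Delta_j$ (equivalently of the entries of $A$) only gives $\cos^2\bigl(k_j\pi/(m+1)\bigr)\in\Q$; that does not bound $m$ (for instance $m+1=8$ with $k_j=2$, or $m+1=12$ with $k_j=3$, pass this test). The missing idea, which is exactly what the paper delegates to \cite[Proposition~4.4]{MOS}, is that $\arg z_j$ equals $\pi/(m+1)$ on the nose: for $t>\nu_j/\mu_j$ the class $-K_j+tH_j$ is ample, hence has positive top self-intersection, so $\nu_j/\mu_j$ is the \emph{largest} real root of $t\mapsto\operatorname{Im}\bigl((t+ib_j)^{m+1}\bigr)$, whose roots are $b_j\cot\bigl(k\pi/(m+1)\bigr)$, and the largest corresponds to $k=1$. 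Only then does rationality of $\cos^2\bigl(\pi/(m+1)\bigr)$, via the standard algebraic-integer argument (so that $2\cos\bigl(2\pi/(m+1)\bigr)$ is a rational algebraic integer, hence an integer), force $m+1\in\{3,4,6\}$, i.e.\ $m=2,3,5$, the degenerate value $\cos\bigl(\pi/(m+1)\bigr)=0$ being excluded since it gives $\nu_j=0$, handled by Lemma \ref{lem:trivial}. Your proposed alternative --- feeding the angles back through $A$, invoking preservation of the integral lattice and ``checking finitely many cases'' --- is only an expectation: it never identifies the largest-root/ampleness step, and without it the claimed exclusion of the spurious values of $m$ does not follow.
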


\begin{proof}
As in the proof of Lemma \ref{lem:discriminant}, we may write:
$$
0=\left(-K_1+\frac{\nu_1}{\mu_1}H_1\right)^{m+1}=
\dfrac{\big(\frac{\nu_1}{\mu_1}+ib_1\big)^{m+1}- \big(\frac{\nu_1}{\mu_1}-ib_1\big)^{m+1}}{2ib_1}(-K_1\cdot H_1^m),
$$
hence, with the notation introduced above, this means that the imaginary part of $z_1^{m+1}$ is zero. The case $j=2$ is analogous.

Finally, the fact that $m=2,3,5$ follows from the fact that $\nu_1/\mu_1$ and $\Delta_1$ are rational numbers and that the argument of $z_1$ is $\pi/(m+1)$ (see \cite[Proposition~4.4]{MOS} for details).
\end{proof}

We may now conclude the classification of FT-manifolds of Picard number two.

\begin{proof}[Proof of Theorem \ref{thm:pic2}]
Assume $X$ is not $\P^1\times\P^1$, that is the complete flag of type $A_1\times A_1$. In particular, we may assume that  $\nu_1\nu_2\neq 0$, by Lemma \ref{lem:trivial}, and that $m=2,3$ or $5$, by Lemma \ref{lem:235}. We will show that
\begin{equation}\label{eq:cos}\nu_1\nu_2=4\cos^2\left(\frac{\pi}{m+1}\right).\end{equation}
Since this number equals $1,2$ or $3$, it follows that either $\nu_1$ or $\nu_2$ are equal to $1$. This provides a unisecant divisor in one of the structures of $\P^1$-fibration of $X$, hence $X$ is a $\P^1$-bundle and we may then conclude by \cite[Theorem 1.1]{Wa}.

In order to prove equality (\ref{eq:cos}), we consider the intersection number $K_1\cdot H_1^m\neq 0$ and write it, using the change of basis (\ref{eq:basechange}) and the Chern-Wu relation $K_2^2=b_2^2H_2^2$, in terms of $K_2\cdot H_2^m$:
$$
K_1\cdot H_1^m=\dfrac{\mu_1^m}{2^{m-1}\mu_2b_2}\mbox{im}(z_2^m)(K_2\cdot H_2^m).
$$
Using the same argument with indices exchanged we get:
$$
K_1\cdot H_1^m=\dfrac{\mu_1^{m-1}\mu_2^{m-1}}{4^{m-1}b_1b_2}\mbox{im}(z_1^m)\mbox{im}(z_2^m)(K_1\cdot H_1^m),
$$
therefore
$$
\dfrac{\mu_1^{m-1}\mu_2^{m-1}}{4^{m-1}b_1b_2}\|z_1\|^{m-1}b_1\|z_2\|^{m-1}b_2=1.
$$
Using that $\frac{\nu_j}{\mu_j}=\|z_j\|\cos(\pi/(m+1))$, this equality may be written as:
$$
\dfrac{\nu_1^{m-1}\nu_2^{m-1}}{4^{m-1}}=\cos^{2m-2}\left(\dfrac{\pi}{m+1}\right),
$$
which provides the desired equality (\ref{eq:cos}).
\end{proof}

As a consequence, we may  deduce severe restrictions on the coefficients of the Cartan matrix of an FT-manifold.

\begin{corollary}\label{cor:rank2} Let $X$ be an FT-manifold, and $M(X)=[m_{ij}]$ its Cartan matrix. Then:
\begin{itemize}
\item $m_{ii} = 2$,
\item $m_{ij}=0$ if and only if $m_{ji}=0$, and
\item if $m_{ij} \neq 0$, then $m_{ji}\in\Z^-$ and $m_{ij}m_{ji} =1,2$ or $3$.
\end{itemize}
In particular $M(X)$ is a generalized Cartan matrix in the sense of \cite[4.0]{Kac}.
\end{corollary}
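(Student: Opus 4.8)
The plan is to derive the three conditions from the Picard number two classification (Theorem \ref{thm:pic2}), using Proposition \ref{prop:fibers} to realize the principal $1\times 1$ and $2\times 2$ submatrices of $M(X)$ as Cartan matrices of low Picard number FT-manifolds, and then to carry out a finite inspection of rank two Cartan matrices.

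First I would settle the diagonal. Since $\pi_i:X\to X_i$ is a smooth $\P^1$-fibration, its relative tangent bundle $T_i$ is a line bundle, so $-K_i=\det T_i=T_i$; restricting to a fiber $f\cong\P^1$ of $\pi_i$ gives $T_i|_f=T_{\P^1}\cong\cO_{\P^1}(2)$. As $\Gamma_i$ is numerically equal to $f$ (a fiber has $-K_X$-degree $2$, which is minimal among curves in the ray $R_i$), this yields $m_{ii}=-K_i\cdot\Gamma_i=2$.

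Next, for $i\neq j$ I would pass to the contraction $\pi_{\{i,j\}}:X\to X_{\{i,j\}}$ of the face $R_{\{i,j\}}$ and consider a fiber $Z$. By Proposition \ref{prop:fibers}, $Z$ is an FT-manifold of Picard number two whose Cartan matrix is $\left(\begin{smallmatrix} 2 & m_{ij}\\ m_{ji} & 2\end{smallmatrix}\right)$, the indices $i,j$ labelling the two $\P^1$-fibrations of $Z$. By Theorem \ref{thm:pic2}, $Z\cong G/B$ with $G$ of type $A_1\times A_1$, $A_2$, $B_2$ or $G_2$; since the elementary contractions of a complete flag manifold are $\P^1$-bundles (Remark \ref{rem:P1bdl}), Proposition \ref{prop:cartanint} identifies the matrix above with the Cartan matrix of $G$. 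Inspecting the four possibilities, the ordered pair $(m_{ij},m_{ji})$ is one of $(0,0)$, $(-1,-1)$, $(-1,-2)$, $(-2,-1)$, $(-1,-3)$, $(-3,-1)$. In every case $m_{ij}=0$ if and only if $m_{ji}=0$, and when they are nonzero one has $m_{ij},m_{ji}\in\Z^-$ with $m_{ij}m_{ji}\in\{1,2,3\}$; this gives the three bullet points. Since these in particular force $m_{ii}=2$, $m_{ij}\le 0$ for $i\neq j$, and $m_{ij}=0\iff m_{ji}=0$, the matrix $M(X)$ is then a generalized Cartan matrix in the sense of \cite[4.0]{Kac}.

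There is no serious obstacle here: the only point that needs a little care is that the Cartan matrix of the rank two fiber $Z$, in the sense of the FT-manifold definition, agrees under the isomorphism $Z\cong G/B$ with the Lie-theoretic Cartan matrix of $G$ — which is precisely Proposition \ref{prop:cartanint}, together with the observation that a minimal rational curve generating $R_j$ is numerically the fiber class $C_j$ of the $j$-th $\P^1$-bundle structure on $G/B$.
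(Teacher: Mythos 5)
Your proposal is correct and follows essentially the same route as the paper: realizing each principal $2\times 2$ submatrix as the Cartan matrix of a Picard number two fiber via Proposition \ref{prop:fibers}, invoking the classification of Theorem \ref{thm:pic2}, and reading off the four rank two Cartan matrices (your extra verification of $m_{ii}=2$ via $T_i|_f\cong T_{\P^1}$ and the identification through Proposition \ref{prop:cartanint} are just made explicit where the paper leaves them implicit).
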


\begin{proof} Any $2 \times 2$ principal submatrix is, by Proposition \ref{prop:fibers}, the Cartan matrix of an FT-manifold of Picard number $2$. These are, up to transposition, the ones corresponding to the manifolds appearing in Theorem \ref{thm:pic2}, which are
$$\begin{array}{ll}\vspace{0.2cm}
M(A_1 \times A_1)=\left(\begin{matrix}
2&0\\0&2
\end{matrix}\right),&
M(A_2)=\left(\begin{matrix}
2&-1\\-1&2
\end{matrix}\right),\\
M(B_2)=\left(\begin{matrix}
2&-1\\-2&2
\end{matrix}\right),&
M(G_2)=\left(\begin{matrix}
2&-1\\-3&2
\end{matrix}\right).
\end{array}
$$\end{proof}

This result allows us to associate, in an obvious way, a Dynkin diagram $\cD(X)$
with an FT-manifold $X$:

\begin{definition}
The {\it Dynkin diagram} $\cD(X)$ of $X$ is the graph having $n:=\rho_X$ nodes, such that the nodes in the $i$-th and $j$-th position are joined by $( -K_i \cdot \Gamma_j)( -K_j \cdot \Gamma_i)$ -- which is equal to $=0,1,2$ or $3$ -- edges. When two nodes are joined by multiple edges we write an arrow on them pointing to the node $j$ if $ -K_i \cdot \Gamma_j <  -K_j \cdot \Gamma_i$. The set of nodes of $\cD(X)$ will be identified with $D=\{1,\dots,n\}$. If $I \subset D$ is a non-empty subset, we will denote by $\cD(X)_I$ the subdiagram of $\cD(X)$ whose set of nodes is $I\subset D$.
\end{definition}

\begin{remark}\label{rem:fibers2}
From Proposition \ref{prop:fibers} it follows that, the Dynkin diagram $\cD(Z_I)$ of the fibers $Z_I$ of a contraction $\pi_I:X\to X_I$ equals  $\cD(X)_I$.
\end{remark}

\begin{proposition}\label{prop:reducible}
Let $X$ be an FT-manifold whose Dynkin diagram $\cD(X)$ can be written as the disjoint union of two subdiagrams $\cD(X)_I$ and $\cD(X)_J$, $I\sqcup J=D$ and let $\pi_I:X \to X_I$ and $\pi_J:X \to X_J$ be the contractions associated to $I$ and $J$. Then $X \simeq X_I \times X_J$ and, moreover, $X_I$ and $X_J$ are FT-manifolds whose Dynkin diagrams are $\cD(X)_I$ and $\cD(X)_J$, respectively.
\end{proposition}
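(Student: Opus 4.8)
The plan is to deduce the statement from Proposition \ref{prop:maxevid} by showing that one of the two quotients, say $X_I$, is itself an FT-manifold. Granting this, Proposition \ref{prop:maxevid} applied to the contraction $\pi_I:X\to X_I$ produces an isomorphism $X\cong X_I\times Y$, where $Y$ is the target of the contraction of the face complementary to $R_I$; since that face is spanned by the rays $R_j$ with $j\in J$, its contraction is $\pi_J$, whence $Y=X_J$ and $X\cong X_I\times X_J$. Once the product decomposition is known, $X_I$ and $X_J$ appear as fibers of $\pi_J$ and $\pi_I$ respectively, so that the assertions that they are FT-manifolds with the expected Dynkin subdiagrams follow at once from Proposition \ref{prop:fibers} and Remark \ref{rem:fibers2}. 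Thus everything reduces to proving that $X_I$ is an FT-manifold, which I would do by induction on $n:=\rho_X$.

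For $n=2$ the diagram $\cD(X)$ is a pair of disjoint nodes, so $M(X)=M(A_1\times A_1)$ and $X\cong\P^1\times\P^1$ by Theorem \ref{thm:pic2}, and the statement is clear. Assume $n\ge 3$; after possibly interchanging $I$ and $J$ we may suppose $\sharp J\ge 2$, so $\sharp I=n-\sharp J\le n-2$. By Proposition \ref{prop:nef}, $X_I$ is a Fano manifold with nef tangent bundle; its Picard number is $\sharp J$ and $\NE(X_I)=\pi_{I*}(\NE(X))$ is simplicial, with extremal rays the classes $\pi_{I*}(R_j)$, $j\in J$. Fix $j\in J$ and let $\psi_j:X_I\to X_{I\cup\{j\}}$ be the elementary contraction of $\pi_{I*}(R_j)$; then $\psi_j\circ\pi_I=\pi_{I\cup\{j\}}$ and, by Proposition \ref{prop:nef} applied to $X_I$, the morphism $\psi_j$ is smooth of fiber type with fibers of Picard number $1$. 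It remains to show that these fibers are isomorphic to $\P^1$: this proves that every elementary contraction of $X_I$ is a smooth $\P^1$-fibration, i.e. that $X_I$ is an FT-manifold.

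Let $W$ be a fiber of $\psi_j$. Since $\psi_j\circ\pi_I=\pi_{I\cup\{j\}}$, the preimage $\pi_I^{-1}(W)$ is a fiber $Z_{I\cup\{j\}}$ of $\pi_{I\cup\{j\}}$; by Proposition \ref{prop:fibers} and Remark \ref{rem:fibers2} it is an FT-manifold with Dynkin diagram $\cD(X)_{I\cup\{j\}}$. As the node $j$ is joined to no node of $I$, this diagram is the disjoint union $\cD(X)_I\sqcup\{j\}$; moreover $\rho_{Z_{I\cup\{j\}}}=\sharp I+1\le n-1$, so the inductive hypothesis applies to $Z_{I\cup\{j\}}$ with the partition of its node set into $I$ and $\{j\}$. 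It gives $Z_{I\cup\{j\}}\cong A\times\P^1$, where $A$ is an FT-manifold with Dynkin diagram $\cD(X)_I$ and the $\P^1$-factor is the quotient whose diagram is the single node $j$. Now the restriction of $\pi_I$ to $Z_{I\cup\{j\}}$ is the contraction of the extremal subface of $\NE(Z_{I\cup\{j\}})$ spanned by the rays $R_i$, $i\in I$ --- this is what Proposition \ref{prop:nef}(3) says, identifying $\NE$ of a fiber with an extremal subface of $\NE(X)$ --- and contracting all of those rays leaves only the ray attached to $j$; hence the restriction of $\pi_I$ is precisely the projection $A\times\P^1\to\P^1$, so $W=\pi_I(Z_{I\cup\{j\}})\cong\P^1$, as wanted.

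The step I expect to require the most care is the last one: having obtained the product $Z_{I\cup\{j\}}\cong A\times\P^1$ from the induction, one must identify the restriction of $\pi_I$ with the projection onto the $\P^1$-factor and not onto $A$. This amounts to tracking, through the identification of $\NE$ of a fiber as an extremal subface of $\NE(X)$ in Proposition \ref{prop:nef}(3), which rays of the fiber $Z_{I\cup\{j\}}$ are killed by $\pi_I$; once this is in place the argument closes, and Proposition \ref{prop:maxevid} supplies the final product decomposition.
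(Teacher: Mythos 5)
Your argument is correct, and it is a genuine variant of the paper's proof rather than a reproduction of it. The paper splits into two cases: for $\sharp I=1$ it proves the splitting directly, by base-changing $\pi_i$ over a fiber $Z_J$ of $\pi_J$, invoking Yasutake's splitting theorem \cite[Theorem 2.3]{Ya} for $\P^1$-bundles with semiample relative anticanonical divisor over a simply connected base, a chain-connectedness argument together with \cite[Lemma 2.2]{Wa}, and finally Lemma \ref{lem:maxevid}; for $\sharp I\ge 2$ it argues exactly as you do in your inductive step (pull a fiber of the elementary contraction of $X_I$ back to a fiber $Z_{I\sqcup\{j\}}$, split it, conclude that the elementary contractions of $X_I$ have one-dimensional fibers, then apply Proposition \ref{prop:maxevid}). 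What you do differently is to eliminate the first case altogether: by arranging $\sharp J\ge 2$ you always land on fibers $Z_{I\cup\{j\}}$ of Picard number $\sharp I+1\le n-1$, so an induction on $\rho_X$ with base case Theorem \ref{thm:pic2} closes the loop. This buys a more uniform proof that avoids the external inputs \cite{Ya} and \cite[Lemma 2.2]{Wa}; what the paper's route buys is that the singleton case is settled once and for all without any strengthening of the statement, so no bookkeeping about which projection corresponds to which contraction is needed. That bookkeeping is the one point you rightly flag: the proposition as literally stated only asserts an abstract isomorphism $X\simeq X_I\times X_J$ (and its last clause even pairs $X_I$ with $\cD(X)_I$, whereas the geometry, as in the paper's own conclusion $X_I\simeq Z_J$, gives $X_I$ the diagram $\cD(X)_J$), so to run your induction you should carry the refined statement that $(\pi_I,\pi_J)$ is the isomorphism onto the product --- which is exactly what Proposition \ref{prop:maxevid} provides. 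With that refinement, the restriction of $\pi_I$ to $Z_{I\cup\{j\}}$ is the contraction of the face spanned by the rays indexed by $I$ (via Proposition \ref{prop:nef}(3) and Proposition \ref{prop:fibers}), hence the projection onto the factor isomorphic to a fiber of the elementary contraction $\pi^{Z}_{\{j\}}$ of the FT-manifold $Z_{I\cup\{j\}}$, which is $\P^1$; so your identification $W\cong\P^1$ is sound and the proof is complete.
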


\begin{proof} Without loss of generality we may assume that $\cD(X)_I $ is connected.

If $I=\{i\}$, then $-K_{i}$ is nef, and it is trivial on every ray $R_j$ with $j \not = i$. Hence it is a supporting divisor of the face $R_J$ and thus is semiample. For any fiber $Z_J$ of $\pi_J$, we denote by $W \to Z_J$ the base change of $\pi_i: X \to X_i$ by $Z_J$. This is a $\P^1$-bundle whose relative anticanonical divisor is semiample.
By \cite[Theorem 2.3]{Ya}, since $Z_J$ is simply connected, then $W\simeq \P^1 \times Z_J$. In particular the image  of $W$ in $X$ contains any extremal curve that it meets. Since $X$  is chain connected by curves of the form $\Gamma_1,\dots,\Gamma_n$ this implies that $W \to X$ is surjective, so,
according to \cite[Lemma 2.2]{Wa}, this implies that $X_J \simeq \P^1$. We then conclude that $X\simeq \P^1 \times X_i$ by Lemma \ref{lem:maxevid}.

Assume now that $\sharp(I)\ge 2$. Composing an elementary contraction of $X_I$ with $\pi_I$ we obtain a contraction of $X$ corresponding to a subset $I' \subset D$ given by $I'=I \sqcup \{j\}$, with $j \in J$. By Remark \ref{rem:fibers2}, the fibers $Z_{I'}$ of $\pi_{I'}$ are FT-manifolds whose Dynkin diagram is $\cD(X)_I \sqcup \{j\}$. By the first part of the proof they are products $Z_I \times \P^1$. It follows that any elementary contraction of $X_I$ has one dimensional fibers.

By Proposition \ref{prop:maxevid} we have that  $X \simeq X_I \times Z_J$. Since $\pi_I$ and $\pi_J$ are the projections to the factors we have  $X_I \simeq Z_J$ and $Z_I \simeq X_J$.
\end{proof}

\begin{corollary}\label{cor:reducible}
If Conjecture \ref{conj:CPconj} holds for FT-manifolds whose Dynkin diagram is connected, then it holds for every FT-manifold.
\end{corollary}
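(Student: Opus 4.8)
The plan is to reduce at once to the connected case by splitting $X$, as a product, along the connected components of its Dynkin diagram, and then to observe that a product of rational homogeneous spaces is again rational homogeneous.

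First I would decompose $\cD(X)$ into its connected components, say $\cD(X)=\cD(X)_{I_1}\sqcup\cdots\sqcup\cD(X)_{I_k}$, where $D=I_1\sqcup\cdots\sqcup I_k$ and each $\cD(X)_{I_j}$ is connected. Applying Proposition \ref{prop:reducible} to the partition $D=I_1\sqcup(I_2\cup\cdots\cup I_k)$ yields an isomorphism $X\cong X_{I_1}\times X_{I_2\cup\cdots\cup I_k}$ in which both factors are FT-manifolds and the Dynkin diagram of the second factor is $\cD(X)_{I_2}\sqcup\cdots\sqcup\cD(X)_{I_k}$. Iterating this on the second factor, after $k-1$ steps one obtains
$$X\cong X_{I_1}\times\cdots\times X_{I_k},$$
where each $X_{I_j}$ is an FT-manifold whose associated Dynkin diagram $\cD(X_{I_j})=\cD(X)_{I_j}$ is connected.

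Now, by the hypothesis, CP Conjecture holds for each $X_{I_j}$, so there are a semisimple Lie group $G_j$ and a parabolic subgroup $P_j\subset G_j$ with $X_{I_j}\cong G_j/P_j$. Setting $G:=G_1\times\cdots\times G_k$ and $P:=P_1\times\cdots\times P_k$, the group $G$ is semisimple, $P$ is a parabolic subgroup of $G$ (the quotient $G/P$ being projective), and
$$X\cong (G_1/P_1)\times\cdots\times(G_k/P_k)\cong G/P,$$
so $X$ is rational homogeneous; this is exactly CP Conjecture for $X$.

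I do not expect any genuine obstacle here: the whole weight of the reduction is carried by Proposition \ref{prop:reducible} (and ultimately by Proposition \ref{prop:maxevid} and Theorem \ref{thm:pic2}, on which it rests). The only point requiring a line of care is the induction: one must check that the intermediate factor $X_{I_2\cup\cdots\cup I_k}$ appearing at each stage is again an FT-manifold with the predicted Dynkin diagram, so that Proposition \ref{prop:reducible} can be reapplied — but this is precisely the last assertion of that proposition. Equivalently, the entire argument can be phrased as a single induction on the number of connected components of $\cD(X)$, the base case (one component) being the hypothesis itself.
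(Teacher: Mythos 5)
Your proposal is correct and coincides with the argument the paper intends: the corollary is stated as an immediate consequence of Proposition \ref{prop:reducible}, obtained exactly by inducting on the connected components of $\cD(X)$ and noting that a product of rational homogeneous spaces $G_1/P_1\times\cdots\times G_k/P_k\cong (G_1\times\cdots\times G_k)/(P_1\times\cdots\times P_k)$ is again rational homogeneous. No gaps; your extra remark that the second factor at each stage is again an FT-manifold with the expected diagram is precisely what the last assertion of Proposition \ref{prop:reducible} supplies.
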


%%%%%%%%%%%%%%%%%%%%
% FT-manifolds
% Section: Main theorem
%%%%%%%%%%%%%%%%%%%%

\section{Classification of Dynkin diagrams of FT-manifolds}\label{sec:main}

%%%%%%%%%%%%%%%%%%%%%%%%%%%%%%%%%%%%%%%
This section is devoted to the proof of Theorem \ref{thm:picn}, whose goal is to show that $\cD(X)$ is the Dynkin diagram of a finite dimensional Lie algebra, or equivalently, in the language of \cite{Kac}, that $\cD(X)$ is {\it finite}.

In our proof we will need to discard the case in which $\cD(X)$ is {\it affine}, that is, in which there exists a vector $v=(v_1,\dots,v_n)\in\R^n$, $v_i>0$ for all $i$, satisfying that $M(X)\cdot v=0$ (cf. \cite[Corollary~4.3]{Kac}). The next proposition expresses the affineness of $\cD(X)$ in terms of curves contained in $X$:

\begin{proposition}\label{prop:CT-Cartan} Let $X$ be an FT-manifold, let $\cD(X)$ be its Dynkin diagram and assume that $\cD(X)$ is connected. Then
\begin{enumerate}
\item[1)] If  every proper subdiagram of $\cD(X)$ is finite, then $\cD(X)$ is finite or affine.
\item[2)] If $\cD(X)$ is affine then there exists  an irreducible rational curve $B \subset X$  such that $B \equiv \sum_1^n m_i \Gamma_i$, with $m_i \in \mathbb Z_{>0}$, and $K_i \cdot B=0$ for every $i=1, \dots, n$.

\end{enumerate}
\end{proposition}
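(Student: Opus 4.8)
\textbf{Proof plan for Proposition \ref{prop:CT-Cartan}.}

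For part 1), the plan is to invoke the classification of generalized Cartan matrices from Kac's book: a symmetrizable generalized Cartan matrix (which $M(X)$ is, by Corollary \ref{cor:rank2}) is of finite, affine, or indefinite type, and these are distinguished by the signature of the associated bilinear form. The indefinite case is characterized by the existence of a proper principal submatrix that is \emph{not} of finite type (see \cite[Theorem 4.3 and Proposition 4.7]{Kac}); equivalently, every GCM all of whose proper principal submatrices are of finite type is itself finite or affine. Since by Proposition \ref{prop:fibers} (and Remark \ref{rem:fibers2}) every proper principal submatrix $M_I(X)$ with $I\subsetneq D$ is the Cartan matrix of a lower-dimensional FT-manifold whose Dynkin diagram is $\cD(X)_I$, the hypothesis that every proper subdiagram is finite says exactly that every proper principal submatrix is of finite type, so $\cD(X)$ is finite or affine. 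The only point requiring care is that $M(X)$ is symmetrizable, which follows from Corollary \ref{cor:rank2}: the conditions $m_{ij}=0\iff m_{ji}=0$ and $m_{ij}m_{ji}\in\{1,2,3\}$ force the standard symmetrizing diagonal matrix to exist exactly as for finite Cartan matrices (connectedness of $\cD(X)$ makes the symmetrizer unique up to scale), so this is essentially bookkeeping.

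For part 2), suppose $\cD(X)$ is affine, so there is $v=(v_1,\dots,v_n)$ with $v_i\in\Z_{>0}$ (the minimal such vector has positive integer entries, by the theory of affine Cartan matrices) and $M(X)\cdot v=0$, i.e. $\sum_j (-K_i\cdot\Gamma_j)\,v_j=0$ for every $i$. The plan is to produce an actual irreducible rational curve in the class $B:=\sum_j v_j[\Gamma_j]$. Since $X$ is Fano with nef tangent bundle, every rational curve is free and families of minimal rational curves through a general point are unsplit and covering (Proposition \ref{prop:nef}); moreover $X$ is chain-connected by the curves $\Gamma_1,\dots,\Gamma_n$, and one can glue these into a connected reducible rational curve (a chain) whose total class is $B=\sum_j v_j[\Gamma_j]$ — choosing, for each $j$, exactly $v_j$ members of the $j$-th minimal family arranged so that the union is connected and has normal bundle with nonnegative summands. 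The relation $M(X)\cdot v=0$ translates precisely into $K_i\cdot B=0$ for all $i$, and since $\{K_i\}$ is (up to sign and linear combination) a basis of $N^1(X)$, this says $-K_X\cdot B = \sum_i c_i (K_i\cdot B)\cdot(\text{something})$; more directly, $-K_X$ is a positive combination of the $-K_i$'s (as in \ref{ssec:bundle}), so $-K_X\cdot B = 0$ is impossible unless $B=0$ — wait, this needs adjusting: in fact the affine relation gives $K_i\cdot B = 0$ for all $i$, and since the $-K_i$ span $N^1(X)$, this would force $B\equiv 0$, a contradiction. The resolution is that the statement is about $K_i\cdot B = 0$ where $B$ is the curve class \emph{before} one knows the $K_i$ span; so actually the affine case must be ruled out precisely because such a nonzero $B$ with $-K_X\cdot B = 0$ cannot exist on a Fano manifold. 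Hence the honest plan is: build the reducible chain $B_0$ in class $\sum v_j[\Gamma_j]$, check $K_i\cdot B_0 = 0$ for all $i$ from $M(X)v=0$, then use the nefness of $T_X$ and a smoothing argument (the chain is a free rational curve, so it smooths to an irreducible free rational curve $B$ in the same numerical class, e.g. by \cite{Mo}-style deformation theory or by cutting with general divisors in a large family) to obtain the irreducible $B$ with $K_i\cdot B=0$ for every $i$.

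The main obstacle, and the step deserving the most care, is the smoothing/gluing in part 2): one must arrange the chain of minimal rational curves so that it is connected, lies in the right numerical class $\sum_j v_j[\Gamma_j]$, and has sufficiently positive normal bundle (e.g. each irreducible component free, attaching nodes general) to guarantee it deforms to an irreducible rational curve. This is where chain-connectedness of $X$ by the $\Gamma_i$, the unsplitness and covering property of the minimal families from Proposition \ref{prop:nef}, and standard results on smoothing free rational curves with prescribed numerical class all have to be combined; the numerical identity $K_i\cdot B=0$ is then immediate from $M(X)\cdot v=0$ and is the easy part. Part 1) is essentially a dictionary translation into Kac's classification and should present no real difficulty beyond verifying symmetrizability.
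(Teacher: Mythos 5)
Your final plan is essentially the paper's proof: part 1) is exactly the appeal to Kac's classification (the paper cites \cite[Proposition~4.7]{Kac}; symmetrizability is not even needed for that statement), and part 2) is the paper's argument verbatim in outline — take a positive kernel vector of $M(X)$, rescale it to lie in $\Z_{>0}^n$, realize the class $\sum m_i[\Gamma_i]$ by a connected tree of minimal rational curves (possible since each family is covering), and smooth it using nefness of $T_X$, for which the paper invokes \cite[Proposition~4.24]{De}. The only caveat is your retracted digression, which is indeed wrong and worth discarding cleanly: in the affine case the $[\Gamma_j]$ form a basis of $N_1(X)$ and $(K_i\cdot\Gamma_j)$ is singular, so the $K_i$ precisely fail to span $N^1(X)$, and the claim that $-K_X$ is a positive combination of the $-K_i$ is only known for homogeneous flags, not for a general FT-manifold at this stage — ruling out the affine case is the content of the later Proposition \ref{prop:try}, not of this one.
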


\begin{proof}
The first assertion follows from \cite[Proposition~4.7]{Kac}, items a) and b). For the second, note that, being $\cD(X)$ affine, there exists a linear combination $\Gamma=\sum_1^n m_i \Gamma_i$, $m_i\in\R_{>0}$ for all $i$, satisfying that $K_i\cdot \Gamma=0$. Since the kernel of the matrix $M(X)$ is generated by rational cycles, we may assume that $m_i\in \Q_{>0}$ for all $i$ and, clearing denominators, we may also assume that $m_i\in\Z_{>0}$ for all $i$. Finally, being $T_X$  nef, every rational tree in $X$ is smoothable by \cite[Proposition~4.24]{De}, therefore the cycle $\Gamma=\sum_1^n m_i \Gamma_i$ is numerically equivalent to an irreducible rational curve $B$.
\end{proof}

Our strategy to prove that the Dynkin diagram of an FT-manifold $X$ is finite consists of showing that a rational curve such as the one in Proposition \ref{prop:CT-Cartan}, 2) does not exist. In order to do this, we will need two auxiliary lemmata.

\begin{lemma}\label{lem:product}
Let $M$ be an irreducible variety and $f:\P^1 \times M \rightarrow X$ be a morphism. Let $b\in \P^1$ be a point and $j:M \to \P^1 \times M$ be the corresponding inclusion $j(z)=(b,z)$. Given an elementary contraction $\pi_i:X\to X_i$, set $g:=\pi_i \circ f$ and consider the following fiber products:
$$
\xymatrix{M\times_{X_i}X\ar[r]\ar[d]&(\P^1\times M)\times_{X_i}X\ar[r]\ar[d]&X\ar[d]^{\pi_i}\\
M\ar[r]^j&\P^1\times M\ar[r]^g\ar[ru]^{f}&X_i}
$$
If $f^*K_i \cdot (\P^1\times\{z\})=0$, then $(\P^1\times M)\times_{X_i}X\cong\P^1\times (M\times_{X_i}X)$.
\end{lemma}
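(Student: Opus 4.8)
Write $B:=\P^1\times M$ and let $p\colon W\to B$, $q\colon W\to X$ be the two projections from $W:=(\P^1\times M)\times_{X_i}X$. Because $X$ is an FT-manifold, $\pi_i$ is a smooth $\P^1$-fibration (Definition~\ref{defn:FT}), hence so is its base change $p$, and moreover $-K_{W/B}=q^*(-K_i)$. The first step is the observation that, by the universal property of the fibre product, the pair $(\id_B,f)$ defines a section $s\colon B\to W$ of $p$ with $q\circ s=f$; set $\Sigma:=s(B)\subset W$, an effective relative Cartier divisor of relative degree $1$ for $p$.

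The key step is to examine $W$ over a slice $C_z:=\P^1\times\{z\}$, for $z\in M$ a closed point. The surface $S_z:=p^{-1}(C_z)$ is a smooth $\P^1$-fibration over $C_z\cong\P^1$, hence a Hirzebruch surface $\F_e$ (see Remark~\ref{rem:P1bdl}), and $\sigma_z:=\Sigma\cap S_z=s(C_z)$ is an irreducible section of $p|_{S_z}$. Adjunction on $S_z$ yields $N_{\sigma_z/S_z}\cong(-K_{S_z/C_z})|_{\sigma_z}=q^*(-K_i)|_{\sigma_z}$; since $q$ restricted to $\sigma_z$ equals, via $s$, the map $f|_{C_z}$, the degree of this line bundle is $-K_i\cdot f_*[C_z]=-(f^*K_i)\cdot(\P^1\times\{z\})$, which vanishes by hypothesis. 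Thus $\sigma_z^2=0$ on $S_z$. An irreducible section of self-intersection $0$ on $\F_e$ forces $e=0$ (for $e>0$ the negative section has self-intersection $-e$ and any other irreducible section has self-intersection $\geq e$), so $S_z\cong\P^1\times\P^1$ and $\sigma_z$ is a fibre of the second projection; consequently $\cO_{S_z}(\sigma_z)$ is the pull-back of $\cO_{\P^1}(1)$ by that second projection and $(p|_{S_z})_*\cO_{S_z}(\sigma_z)\cong\cO_{C_z}^{\oplus2}$.

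To globalise, note that $p$ is a smooth $\P^1$-fibration with section $\Sigma$, so $\cE:=p_*\cO_W(\Sigma)$ is locally free of rank $2$ and the tautological surjection induces an isomorphism $W\cong\P_B(\cE)$ over $B$. By the previous paragraph $\cE|_{C_z}\cong\cO_{C_z}^{\oplus2}$ for every $z$; hence, by cohomology and base change along the projection $\operatorname{pr}_M\colon B\to M$ (for which it suffices that $M$ is a reduced connected variety), $\cE':=(\operatorname{pr}_M)_*\cE$ is locally free of rank $2$ on $M$ and the natural map $\operatorname{pr}_M^*\cE'\to\cE$ is an isomorphism, being so on every $C_z$. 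Therefore $W\cong\P_B(\operatorname{pr}_M^*\cE')\cong\P^1\times\P_M(\cE')$, and restricting this isomorphism over $\{b\}\times M$ identifies $\P_M(\cE')$ with $p^{-1}(\{b\}\times M)$, which by construction of the fibre product is exactly $M\times_{X_i}X$ (formed along $g\circ j=\pi_i\circ f\circ j$, as in the statement). This gives $W\cong\P^1\times(M\times_{X_i}X)$.

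\noindent\textbf{Where the difficulty lies.} The whole argument hinges on turning the numerical hypothesis $f^*K_i\cdot(\P^1\times\{z\})=0$ into the geometric fact that each fibre surface $S_z$ is a trivial $\P^1\times\P^1$ on which $\Sigma$ cuts out a ruling --- i.e. the computation $\sigma_z^2=0$ and the classification of sections of Hirzebruch surfaces. Once this is settled, the projective-bundle description of $W$ together with cohomology and base change finishes the proof without further obstacles.
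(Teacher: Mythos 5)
Your argument is correct and follows essentially the same route as the paper: the section induced by $f$ exhibits $(\P^1\times M)\times_{X_i}X$ as $\P(\cE)$ for a rank-two bundle $\cE$ on $\P^1\times M$, the hypothesis $f^*K_i\cdot(\P^1\times\{z\})=0$ forces $\cE$ to be trivial on every slice $\P^1\times\{z\}$, and hence $\cE$ is a pull-back from $M$, which splits off the $\P^1$ factor. The only (harmless) difference is that you prove the fibrewise triviality by adjunction on the Hirzebruch surfaces $S_z$ and the classification of their sections, whereas the paper reads it off directly from the extension $0\to\cO(f^*K_i)\to\cE\to\cO\to 0$ together with the vanishing of $H^1(\P^1,\cO_{\P^1})$.
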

\begin{proof}
Note that the morphism $f:\P^1\times M\to X$ factors via $(\P^1\times M)\times_{X_i}X$, providing a section of the natural map  $(\P^1\times M)\times_{X_i}X\to \P^1\times M$ and giving $(\P^1\times M)\times_{X_i}X$ the structure of a $\P^1$-bundle over $\P^1\times M$. More precisely, it is the projectivization of a rank two bundle $\cE$ on $\P^1\times M$ appearing as an extension:
$$
0\rightarrow\cO(f^*K_i)\longrightarrow\cE\longrightarrow\cO\rightarrow 0
$$
Now, since $f^*K_i$ has intersection zero with the fibers of the projection $p:\P^1\times M\to M$, then $\cE$ is trivial on these fibers and it follows that $\cE=p^*j^*\cE$, from which our statement follows.
\end{proof}

\begin{lemma}\label{lem:finite} Let $M$ be an irreducible projective variety and $f: \P^1 \times M \to X$ be a morphism such that  $f(\P^1 \times \{z\})$ is not contracted by $\pi_i$. If  $\pi_i \circ f$ is of fiber type then, for any $b \in \P^1$, the morphism $(\pi_i \circ f)|_{\{b\} \times M}$ is of fiber type.
\end{lemma}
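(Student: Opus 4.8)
The plan is to reduce the statement to a single intersection-number computation on the product $\P^1\times M$, exploiting the product structure through the seesaw theorem. Write $g:=\pi_i\circ f\colon\P^1\times M\to X_i$, fix an ample divisor $H$ on $X_i$ (which exists since $X_i$ is projective), let $p\colon\P^1\times M\to\P^1$ and $q\colon\P^1\times M\to M$ be the two projections, and set $d:=\dim M$. The first observation is that, since $f(\P^1\times\{z\})$ is not contracted by $\pi_i$, the image $g(\P^1\times\{z\})=\pi_i(f(\P^1\times\{z\}))$ is a curve, so the integer
\[
a:=g^*H\cdot(\P^1\times\{z\})=H\cdot g_*(\P^1\times\{z\})
\]
is strictly positive; note that it does not depend on $z$, since the curves $\P^1\times\{z\}$ are all numerically equivalent in $\P^1\times M$.

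Next I would split $g^*H$ according to the product structure. Fixing $b_0\in\P^1$ and putting $F:=\{b_0\}\times M$ (a divisor whose class is independent of $b_0$, with $F^2\equiv 0$), the line bundle $\cO(g^*H-aF)$ is trivial on every fibre $\P^1\times\{z\}$ of $q$, because it has degree $a-a=0$ there; hence, by the seesaw theorem, it equals $q^*L$ for a divisor $L$ on $M$, and restricting to $F\cong M$ identifies $L$ with $(g|_{\{b\}\times M})^*H$ for any $b\in\P^1$ (its numerical class being independent of $b$). Thus
\[
g^*H\equiv aF+q^*L ,\qquad a>0 .
\]
Equivalently, this is the Künneth-type decomposition $N^1(\P^1\times M)=p^*N^1(\P^1)\oplus q^*N^1(M)$ applied to $g^*H$; if one prefers, $M$ may freely be replaced by its normalisation.

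Then I would bring in the hypothesis that $g=\pi_i\circ f$ is of fibre type: then $g$ factors through its image, a variety of dimension at most $d$, so $(g^*H)^{d+1}=0$. Expanding $(aF+q^*L)^{d+1}$ and discarding every term containing $F^2\equiv 0$ or $(q^*L)^{d+1}=q^*(L^{d+1})=0$ (the latter because $\dim M=d$), only the cross term $(d+1)\,a\,F\cdot(q^*L)^d=(d+1)\,a\,(L^d)$ survives, so $L^d=0$. Finally, if $(\pi_i\circ f)|_{\{b\}\times M}$ were generically finite onto its image $W_b$ for some $b$, then $\dim W_b=\dim M$ and $L$ would be the pull-back of the ample divisor $H|_{W_b}$ under a generically finite map, so $L^d>0$ by the projection formula — a contradiction. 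Hence $(\pi_i\circ f)|_{\{b\}\times M}$ has image of dimension $<\dim M$, i.e.\ is of fibre type, for every $b\in\P^1$.

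I do not expect a serious obstacle: the only non-formal input is the splitting of $g^*H$, which is precisely where the product structure of $\P^1\times M$ (through seesaw) enters; everything else is routine intersection theory. Conceptually the lemma merely says that the pull-back of an ample divisor under a fibre-type morphism out of $\P^1\times M$ is a non-big divisor that is positive in the $\P^1$-direction, and such a divisor is automatically non-big on every slice $\{b\}\times M$; in particular this argument uses neither Lemma \ref{lem:product}, nor that $\pi_i$ is a $\P^1$-fibration, nor any special feature of FT-manifolds.
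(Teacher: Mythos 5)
Your proof is correct, and it takes a genuinely different route from the paper. The paper argues on $1$-cycles: any irreducible curve $C$ contracted by $\pi_i\circ f$ decomposes numerically in $\P^1\times M$ as $\lambda\Gamma_b+\mu\,\P^1_z$ with $\lambda,\mu\geq 0$, and since $[f_*C]$ lies in the extremal ray $R_i$, extremality forces $\mu f_*[\P^1_z]\in R_i$, hence $\mu=0$; so every contracted curve lies in a slice $\{b\}\times M$, which immediately gives the lemma. That argument is short precisely because it leans on the fact that $\pi_i$ is an \emph{elementary} contraction of a Fano manifold (extremality of $R_i$), and it yields slightly more than the statement (all contracted curves are vertical). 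Your argument instead is purely divisor-theoretic: the seesaw decomposition $g^*H\equiv aF+q^*L$ with $a>0$, the vanishing $(g^*H)^{d+1}=0$ forced by the fiber-type hypothesis, and the resulting $L^d=0$ contradicting generic finiteness of the slice restriction via the projection formula. This makes no use of extremal rays, of the $\P^1$-fibration structure, or of any property of FT-manifolds, so you prove the more general fact that for any morphism $g:\P^1\times M\to Y$ to a projective variety which does not contract the horizontal $\P^1$'s, fiber type of $g$ implies fiber type of every slice restriction; the price is a somewhat longer computation, and the small technical points you already address (independence of $a$ from $z$ by numerical equivalence of the fibers of $q$, applicability of seesaw over the reduced, possibly non-normal base $M$, and independence of $L$ from $b$, which is what gives the conclusion for \emph{every} $b$) are all fine.
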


\begin{proof} Denote by $p_1$ and $p_2$ the projections of $\P^1 \times M$ onto the factors.
Let $(b,z)$ be a point of $\P^1 \times M$, and $C$ an irreducible curve passing through $(b,z)$ contracted by
$\pi_i \circ f$. As a $1$-cycle, $C$ is numerically equivalent in $\P^1 \times M$ to a combination with nonnegative coefficients $\lambda \Gamma_b + \mu \P^1_z$, where $\Gamma_b$ is a curve contained in  $p_1^{-1}(b)$ and $\P^1_z:=p_2^{-1}(z)$.
The curve $C$ is contracted by $\pi_i \circ f$, therefore  $[f_*C] \in R_i$, so, by the extremality of $R_i$,
$[\mu f_*\P^1_z] \in R_i$. This contradicts our assumptions, unless $\mu=0$.
We can thus conclude that  $C$ is contained in $p_1^{-1}(b)$.
\end{proof}

Next we introduce, for FT-manifolds, a  construction similar to the so-called {\it basic construction} in \cite[p.~562]{Ke}.

\begin{construction}\label{con:prod}
Let $X$ be an FT-manifold, $f_0:V_0 \to X$ a morphism from a smooth variety $V_0$, and $i_1, i_2, \dots, i_\ell \in \{1,2,\dots, n\}$
a list of (not necessarily distinct) indices.
Starting from this data we construct a pair $(V_k,f_k)$, $k=1,\dots,\ell$, recursively in the following way:
given $f_{k-1}(V_{k-1})\subset X$  we define $V_k$ to be the fiber product $V_{k-1}\times_{X_{i_k}}X$,
and $f_k$ to be the natural map from $V_k$ to $X$:
$$
\xymatrix{V_k\ar[rr]^{f_k}\ar[d]^{\pi_k}&&X\ar[d]^{\pi_{i_k}}\\V_{k-1}\ %ar@/^/[u]^{\sigma_k}
\ar[rr]_{g_{k-1}}\ar[rru]^{f_{k-1}}\ar[rr]&&X_{i_k}}
$$
Note that, if $m=\dim X -\dim V_0$ we can choose  indices $i_1, i_2, \dots, i_m$ so that, at each step, $f_{k-1}(V_{k-1})$ is a divisor in $f_k(V_k)$. The process stops at step $m$, and $f_{m}(V_m) = X$.
\end{construction}

\begin{proposition}\label{prop:try}
There is no  rational curve $B \subset X$ whose numerical class lies in the interior of $\NE(X)$ such that $K_i \cdot B=0$ for every $i=1, \dots, n$.
\end{proposition}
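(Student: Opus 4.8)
The plan is to argue by contradiction: suppose such a curve $B$ exists. The starting point is Proposition \ref{prop:CT-Cartan}: a curve with $B \equiv \sum m_i \Gamma_i$, $m_i > 0$, and $K_i \cdot B = 0$ for all $i$ arises precisely when $\cD(X)$ is affine, and the existence of $B$ in the interior of $\NE(X)$ is essentially the obstruction we must rule out to force $\cD(X)$ finite. Since $T_X$ is nef, $B$ is free, so it deforms in a covering family, and we may assume $B$ is general in this family and (after passing to the normalization) that $f_0 \colon \P^1 = V_0 \to X$ is a morphism birational onto its image with $f_0^* K_i \cdot \P^1 = K_i \cdot B = 0$ for every $i$.

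The key idea is to run Construction \ref{con:prod} starting from $f_0 \colon \P^1 \to X$ with a well-chosen list of indices $i_1, \dots, i_m$ (where $m = \dim X - 1$) so that at each step $f_{k-1}(V_{k-1})$ is a divisor in $f_k(V_k)$ and $f_m(V_m) = X$. The crucial observation is that the hypothesis $f_0^* K_i \cdot \P^1 = 0$ lets us apply Lemma \ref{lem:product}: at the first step, since $K_{i_1} \cdot B = 0$, we get $V_1 = V_0 \times_{X_{i_1}} X \cong \P^1 \times (V_0 \times_{X_{i_1}} X)$, i.e. $V_1 \cong \P^1 \times M_1$ for some curve $M_1$, with the map $f_1$ restricting on each $\P^1 \times \{z\}$ to a translate of $f_0$, so $f_1^* K_i \cdot (\P^1 \times \{z\}) = 0$ still holds for every $i$. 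One then wants to propagate this product structure: using Lemma \ref{lem:finite} to see that the relevant contractions stay of fiber type on the slices, and Lemma \ref{lem:product} again at each subsequent step, one shows inductively that $V_k \cong \P^1 \times M_k$ and $f_k^* K_i \cdot (\P^1 \times \{z\}) = 0$ for all $i$ and all $k$. At the last step $V_m \cong \P^1 \times M_m$ and $f_m \colon \P^1 \times M_m \to X$ is surjective.

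Now I would derive the contradiction from numerical positivity. On $X$, consider the divisor class $-\sum_i a_i K_i$ for suitable positive coefficients $a_i$ so that this class is ample (such coefficients exist because, up to the affine case which we are trying to exclude, the $-K_i$ together with enough $H_i$'s span the nef cone appropriately; more concretely, one can instead just use that $-K_X$, or any ample class $H$, restricts to something with positive top self-intersection on $X$). Pull back an ample class $H$ on $X$ via $f_m$ to $\P^1 \times M_m$ and compute $f_m^* H^m$, expanding in the basis of $N^1(\P^1 \times M_m)$ coming from the fiber $\P^1 \times \{z\}$ and $p^* N^1(M_m)$; because $f_m$ is generically finite and surjective, $f_m^* H^m > 0$, which forces the class $f_m^* H$ to have strictly positive intersection with $\P^1 \times \{z\}$ — equivalently $H \cdot B > 0$, which is fine — but the point is to run this argument with $H$ replaced by a nef class supported on a proper face, or to use that $-K_i$ pulls back to a class meeting every slice in degree $0$ yet spanning, together with the slice class, a plane in $N^1$ whose intersection form is degenerate, contradicting $f_m^* H^m \neq 0$ when $H$ is built from the $-K_i$. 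The cleanest route: since $f_m^* K_i$ has degree $0$ on every fiber $\P^1 \times \{z\}$ of $p \colon \P^1 \times M_m \to M_m$, we get $f_m^* K_i \in p^* N^1(M_m)$ for all $i$; but the $-K_i$ span $N^1(X)$ (as $X$ is an FT-manifold of Picard number $n$ and the $-K_i$ form a basis dual-ish to the $\Gamma_j$ via the nondegenerate Cartan-type matrix — nondegenerate because we already reduced to the case where no proper subdiagram is affine, and $\cD(X)$ itself being affine means $M(X)$ has a one-dimensional kernel, but we can still say $f_m^* N^1(X) \subset p^* N^1(M_m)$ up to the missing direction), forcing $f_m^* N^1(X)$ to lie in a proper subspace, contradicting that $f_m$ is generically finite and surjective, hence $f_m^* \colon N^1(X) \to N^1(\P^1 \times M_m)$ is injective.

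The main obstacle I anticipate is the inductive propagation in the middle paragraph: after the first application of Lemma \ref{lem:product} one must check that the new slices $\P^1 \times \{z\} \subset V_1$ are still \emph{not contracted} by the next contraction $\pi_{i_2}$ (so that Lemma \ref{lem:finite} applies) and that $f_1^* K_i$ still has degree $0$ on them for \emph{all} $i$, not just $i = i_1$ — this requires knowing that the slices of $V_1$ map to curves numerically equivalent to $B$, which in turn uses that $V_1 \to V_0$ is a $\P^1$-bundle with the composite $V_1 \to V_0 \to X$ behaving compatibly. Getting the bookkeeping of numerical classes right through all $m$ steps, and ensuring the chosen index list $i_1, \dots, i_m$ genuinely realizes the ``divisor at each step'' condition of Construction \ref{con:prod} while keeping every intermediate contraction of fiber type on the slices, is the delicate part; once $V_m \cong \P^1 \times M_m$ surjects onto $X$ with all $f_m^* K_i$ pulled back from $M_m$, the final numerical contradiction is immediate.
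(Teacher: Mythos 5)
Your construction phase follows the paper's strategy (iterated fiber products as in Construction \ref{con:prod}, split into a product at each step via Lemma \ref{lem:product}, with the bookkeeping that every slice $\P^1\times\{z\}$ pushes forward to the class of $B$ so that all the pullbacks $K_i$ stay of degree zero on slices — this part is fine and is essentially what the paper does). The gap is in your endgame. The claimed contradiction ``$f_m^*K_i\in p^*N^1(M_m)$ for all $i$, the $-K_i$ span $N^1(X)$, hence $f_m^*N^1(X)$ lies in a proper subspace, contradicting injectivity of $f_m^*$'' does not work, for two reasons. First, in the only case in which such a curve $B$ can exist numerically — the affine case, where $M(X)$ has a one-dimensional kernel and the $\Gamma_j$ form a basis of $N_1(X)$ — the classes $K_i$ span only a hyperplane of $N^1(X)$; the statement $f_m^*K_i\in p^*N^1(M_m)$ is then literally equivalent to $K_i\cdot B=0$, i.e.\ to the hypothesis, so the argument is circular. (If the $K_i$ did span $N^1(X)$, then $K_i\cdot B=0$ for all $i$ would force $B\equiv 0$ with no construction at all.) Second, $p^*N^1(M_m)$ has codimension one in $N^1(\P^1\times M_m)$ but can be of arbitrarily large dimension, so containing the image of a subspace of $N^1(X)$ in it contradicts nothing about injectivity of $f_m^*$. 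As written, no contradiction is extracted from the final configuration.

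The paper closes the argument differently, and this is the missing idea: it runs \emph{two} parallel constructions with the \emph{same} list of indices — one starting from a point $p\in B$, which needs $m=\dim X$ steps to fill $X$ and fixes the indices $i_1,\dots,i_m$, and one starting from the normalization $\P^1\to B$, carried out only through step $m-1$. The inductive splitting gives $V'_{m-1}\cong\P^1\times V_{m-1}$, compatibly with the point-based construction on the slice $\{b\}\times V_{m-1}$. Since the last step of the point-based construction fills $X$, the restriction $\pi_{i_m}\circ f_{m-1}$ on that slice is generically finite onto $X_{i_m}$; since $[B]$ is interior, no slice is contracted by $\pi_{i_m}$, and Lemma \ref{lem:finite} then forces $\pi_{i_m}\circ f'_{m-1}:V'_{m-1}\to X_{i_m}$ to be generically finite, which is absurd because $\dim V'_{m-1}=\dim X>\dim X_{i_m}$. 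So the contradiction is a dimension count enabled by Lemma \ref{lem:finite} and the auxiliary point-based construction, not a statement about $N^1$; your proposal uses Lemma \ref{lem:finite} only vaguely during the propagation, where it is not needed, and omits the auxiliary construction that makes the final count possible. A repair along your lines would have to replace the $N^1$ argument by something of this dimension-theoretic type (e.g.\ showing via Lemma \ref{lem:finite} that general slices of $M_m$ map to divisors not dominating any $X_i$ and deriving a contradiction from chain-connectedness), but that is not what is written.
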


\begin{proof}
Assume that, on the contrary, there exists a curve $B$ satisyfying those properties. We perform Construction \ref{con:prod}, starting from a smooth point $p \in B$ until $m= \dim V_m= \dim X$,  denoting by $i_1,i_2, \dots, i_m$ the indices corresponding to the curves used in the construction. Using the same list of indices $i_1,i_2, \dots, i_{m-1}$ we perform again Construction
\ref{con:prod}, starting now from $V'_0=\P^1$ and $f'_0$ the normalization $f'_0:\P^1 \to B$, so that we obtain pairs $(V_k',f'_k)$.

Denoting $b:={f'_0}^{-1}(p)$, and by $j_k:V_k \to \P^1 \times V_k$ the inclusion $j_k(v)=(b,v)$, we claim that the following holds for every $k$:  $V_k'\simeq \P^1 \times V_k$, and, under this identification, $f_k' \circ j_k= f_k$.

The statement is clearly true for $k=0$, so we assume by induction that it holds for $k=\ell-1$.
In particular, we have $g_{\ell-1}' \circ j_{\ell-1}= g_{\ell-1}$ and, since $K_{i_{\ell}}\cdot B=0$,  Lemma \ref{lem:product}, tells us that
 $$V'_\ell \simeq (\P^1 \times V_{\ell-1})\times_{X_{i_\ell}}X\cong \P^1 \times (V_{\ell-1}\times_{X_{i_\ell}}X)\cong \P^1 \times V_{\ell}.$$
Finally, since both the left and the right squares of the diagram
$$
\xymatrix@C=35pt{V_\ell \ar[d]_{\pi_\ell}\ar[r]^(.38){j_\ell} &\P^1 \times V_\ell \ar[d]_{\pi'_\ell}\ar[r]^{f'_\ell}&X \ar[d]^{\pi_{i_\ell}}\\
V_{\ell-1}  {\ar@/_1pc/[rr]_{g_{\ell-1}} } \ar[r]^(.45){j_{\ell-1}}&\P^1 \times V_{\ell-1}\ar[r]^{g'_{\ell-1}}& X_{i_\ell}}
$$
are cartesian, also the outer square is, hence $f_\ell' \circ j_\ell= f_\ell$, and the claim is proved.

In particular, taking $k=m-1$ we get $V'_{m-1} = \P^1 \times V_{m-1}$. Since by hypothesis $B$ is not contracted by $\pi_i$ for any $i$, then by Lemma \ref{lem:finite} the morphism $\pi_{i_m} \circ f'_{m-1}:V'_{m-1} \to X_{i_m}$ is generically finite, contradicting that $\dim V'_{m-1}= \dim X$.
\end{proof}

We can now prove the main result of this paper:

\begin{proof}[Proof of Theorem \ref{thm:picn}]
By Proposition \ref{prop:reducible} we can assume that $\cD(X)$ is connected.
We prove the statement by induction on the number of nodes of $\cD(X)$. The result
 is true for $n=2$ by Theorem \ref{thm:pic2}, hence we may assume that $n\geq 3$ and that the statement holds for FT-manifolds of Picard number $\le n-1$.
By induction every subdiagram of $\cD(X)$ is finite, hence,
by Proposition \ref{prop:CT-Cartan} 1), we may assert that  $\cD(X)$ is either finite  or affine.

In the latter case, item 2) in the same Proposition provides an irreducible rational curve $B \subset X$  such that $B \equiv \sum_1^n m_i \Gamma_i$, with $m_i \in \mathbb Z_{>0}$, and $K_i \cdot B=0$ for every $i=1, \dots, n$. This contradicts Proposition \ref{prop:try}.\end{proof}

\section{Chains of rational curves on FT-manifolds}\label{sec:chains}

Loci of chains of minimal rational curves in the extremal rays of an FT-manifold $X$ admit a very peculiar description in terms of the corresponding elementary contractions. Given a subset $Y \subset X$ we define $\Ch(i_1 \dots i_m)(Y)$ to be the set of points $x \in X$ such that there exist curves $\Gamma_{i_1}, \dots, \Gamma_{i_l}$ satisfying $\Gamma_{i_j} \cap \Gamma_{i_{j+1}} \not = \emptyset$, $\Gamma_{i_1} \cap Y \not = \emptyset$ and $x \in \Gamma_{i_m}$, that is:
$$\Ch(i_1 \dots i_m)(Y)=\pi_{i_m}^{-1} (\pi_{i_m}( \dots (\pi_{i_1}^{-1} (\pi_{i_1}(Y))))).$$

Let us first consider the case in which $X=F$ is a complete flag manifold, for which we will study the loci of chains of curves $C_i$, $i=1,\dots,n$.
\begin{remark}
If $X=F=G/B$ is a complete flag manifold, chains of rational curves as above are related to elements of the Weyl group $W$ of $G$. In fact, the loci $\Ch(i_1 \dots i_m)(y)\subset F$ are {\it Schubert varieties}, that is the closures of the subsets of the form $BwB/B$, $w\in W$. In order to make this relation more explicit we need the description of $W$ as a Coxeter group, whose definition we introduce below. For further details we refer the reader to  \cite[Section 2]{Ke}.
\end{remark}

\begin{definition}\label{def:coxgroup}
A {\em Coxeter group} is  a group admitting a  presentation of the form $$\langle s_1, s_2, \dots, s_n \,|\, (s_is_j)^{a_{ij}} =1 \rangle, \mbox{ where }a_{ii} = 1,\mbox{ and }a_{ij} = a_{ji} \ge 2 \mbox{ for }i \not = j.$$ Denoting by $r_k(s_i,s_j)$ the word of length $k$ which contains $s_i$ in every odd position and $s_j$ in every even position and by $l_k(s_i,s_j)=r_k(s_j,s_i)$, an equivalent presentation of the group is given by:
$$\langle s_1, s_2, \dots, s_n \,|\, s_i^2 =1, r_{a_{ij}}(s_i,s_j)=l_{a_{ij}}(s_i,s_j) \rangle.$$
A {\em reduced word} in the group is a word of minimal length among those presenting an element. The {\em length} of an element  is the length of a reduced word presenting that element.
\end{definition}

\begin{remark}
The Weyl group $W$  of a semisimple Lie algebra $\fg$ is the Coxeter group whose exponents $a_{ij}$ are obtained from the Dynkin diagram of $\fg$ via the formulas
\begin{equation}\label{eq:relcoeff} 4 \cos^2\left(\dfrac{\pi}{a_{ij}}\right)= \sharp\{\text{edges joining $i$ and $j$}\}.
\end{equation}
Given a reduced word $w=s_{i_1}\dots s_{i_\ell}$ the corresponding Schubert variety $\overline{BwB/B}$
equals $\Ch(i_1 \dots i_\ell)(y)$ and the dimension of $\overline{BwB/B}$ is equal to the length of $w$.
\end{remark}

\begin{definition}
Let $W$ be a Coxeter group as in Definition \ref{def:coxgroup}; the {\em Coxeter monoid} associated to $W$ is the monoid $W'$ given by the presentation
$$ W':=\langle s_1, s_2, \dots, s_n \,|\, s_i^2 =s_i, r_{a_{ij}}(i,j)=l_{a_{ij}}(i,j) \rangle.$$
The {\em length} of an element of $W'$ is defined as in \ref{def:coxgroup}.
\end{definition}

In \cite[Theorem 1]{Ts} it has been proved that there is a bijection between reduced words in $W$ and $W'$, where two reduced words in $W$ are equal if and only if the corresponding words in $W'$ are equal.
In particular, if $W$ is finite, then $W'$ is finite, and the maximum length of a reduced word in $W'$ equals the maximum length of a reduced word in $W$, which coincides with the dimension of $F$. So we have

\begin{lemma}\label{lem:dimflag} If $F=G/B$ is a complete flag manifold, then $\dim F$ equals the length of the longest word of the monoid $W'$ associated with the Weyl group $W$ of $G$.
\end{lemma}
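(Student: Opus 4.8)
The plan is to read off the statement from the comparison between $W$ and $W'$ established in \cite{Ts}, together with the standard description of $\dim F$ via the longest element of the Weyl group. Since $G$ is semisimple, its Dynkin diagram is a disjoint union of the finite diagrams listed in \ref{ssec:dynkinhomo}, so $W$ is a \emph{finite} Coxeter group; in particular it has a longest element $w_0$, and the maximum length of a reduced word in $W$ is $\ell(w_0)$.

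First I would invoke \cite[Theorem~1]{Ts}, recalled above: the reduced words of $W$ and of $W'$ are in length-preserving bijection, and two of them represent the same element of $W$ precisely when the corresponding ones represent the same element of $W'$. This yields a length-preserving bijection between the elements of $W$ and those of $W'$; in particular $W'$ is finite and the longest word of $W'$ has length $\ell(w_0)$. (One only needs here that every element of $W'$ is represented by a reduced word, so that "longest word of $W'$" coincides with the maximal length of an element of $W'$.)

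Next I would identify $\ell(w_0)$ with $\dim F$. By the dictionary recalled at the beginning of this section, a reduced word $w=s_{i_1}\cdots s_{i_\ell}$ of an element of $W$ produces the Schubert variety $\Ch(i_1\cdots i_\ell)(y)=\overline{BwB/B}$, of dimension $\ell$. Applying this to $w=w_0$, the corresponding Schubert variety is the closure of the open Bruhat cell, hence all of $F$, so $\dim F=\ell(w_0)$. (Equivalently, by \ref{ssec:bundle} one has $\dim F=\dim(\fg/\fb)=\sharp\Phi^+=\ell(w_0)$.) Combining the two equalities, $\dim F=\ell(w_0)$ equals the length of the longest word of $W'$, as claimed.

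I do not expect a genuine obstacle here: the argument is a short chain of citations. The only points deserving a line of care are that $W'$ is finite, so that "longest word" is meaningful, and that Tsukerman's bijection preserves lengths — both of which are contained in the statement of \cite[Theorem~1]{Ts} as quoted above.
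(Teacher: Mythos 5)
Your argument is correct and follows essentially the same route as the paper: Tsaranov's bijection between reduced words of $W$ and $W'$ gives that $W'$ is finite with maximal length $\ell(w_0)$, and the identification of $\ell(w_0)$ with $\dim F$ via Schubert varieties (equivalently $\sharp\Phi^+$) is exactly the dictionary the paper uses. Only note that the cited result \cite{Ts} is due to Tsaranov, not ``Tsukerman.''
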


Let us extend the previous construction to the case in which $X$ is an FT-manifold. Since $X$ has been endowed with a Dynkin diagram, we may associate with $X$ a Coxeter monoid $W'(X)$. However it is not clear a priori that Lemma \ref{lem:dimflag} holds
in this case. In turn we choose a general point $y\in X$ and consider the monoid $\ch(X)$ generated by the symbols $1,2,\dots, n$ subject to the  following relations:
$$i_1i_2\dots i_l=j_1j_2\dots j_k \quad \text{if and only if} \quad \Ch(i_1i_2 \dots i_l)(y)=\Ch(j_1j_2 \dots j_k)(y).$$
From the definition it is clear that for every $i$ we have $ii=i$; moreover,
it is easy to check, considering the fibers of  $\pi_{\{i,j\}}$, that we also have
$$ r_{a_{ij}}(i,j)=l_{a_{ij}}(i,j),$$
where the $a_{ij}$ are obtained from the Dynkin diagram of $X$ via the formulas in (\ref{eq:relcoeff}).
In other words, $\ch(X)$ is a quotient of $W'(X)$.

Observing that if $w$ is a reduced word in $\ch(X)$ then $\dim \Ch(w)(y)= l(w)$ (where the length $l(w)$ is defined as in \ref{def:coxgroup}), we have the following

\begin{corollary}\label{cor:expecteddim}
Let $X$ be an FT-manifold with Dynkin diagram $\cD(X)$. If $\cD(X)$ is finite and $G/B$ is the variety of flags corresponding to $\cD(X)$, then $\dim X \le \dim G/B$.
\end{corollary}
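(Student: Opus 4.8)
The plan is to identify $\dim X$ with the length of a reduced word in the monoid $\ch(X)$, and then to bound that length using the fact that $\ch(X)$ is a quotient of the Coxeter monoid $W'(X)$, which is finite precisely because $\cD(X)$ is finite. First I would produce a single word $w_0$ over $\{1,\dots,n\}$ with $\Ch(w_0)(y)=X$. Taking $w_0=(12\cdots n)^N$, the loci $\Ch((12\cdots n)^N)(y)$ form an increasing chain of closed subvarieties of $X$ (since $\pi^{-1}(\pi(S))\supseteq S$ for every $S$), so by noetherianity they stabilize to a closed subset $Z\ni y$ which is saturated for every $\pi_i$, i.e.\ a union of fibers of each elementary contraction. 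Since $X$ is chain connected by the curves $\Gamma_1,\dots,\Gamma_n$, following such a chain from $y$ to an arbitrary point of $X$ forces $Z=X$; I then fix $N$ accordingly.

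The second step is to pass to a reduced word. Let $w$ be a reduced word of $\ch(X)$ representing the same element as $w_0$. By the very definition of the relations in $\ch(X)$, two words represent the same element if and only if they have the same chain locus through the general point $y$, so $\Ch(w)(y)=\Ch(w_0)(y)=X$. The observation recorded just before the statement then gives $\dim X=\dim\Ch(w)(y)=l(w)$.

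It remains to bound $l(w)$. Since $\cD(X)$ is finite, it is the Dynkin diagram of the semisimple group $G$, whose Weyl group $W$ is finite; hence the associated Coxeter monoid $W'(X)=W'$ is finite and, by the discussion preceding Lemma~\ref{lem:dimflag} together with the lemma itself, the longest reduced word of $W'$ has length $\dim G/B$. Now $\ch(X)$ is a quotient of $W'(X)$: the reduced word $i_1\cdots i_{l(w)}$ of $w$ also represents some element $c'\in W'(X)$, and a reduced expression of $c'$ in $W'$ cannot be strictly shorter than $l(w)$ — otherwise its image in $\ch(X)$ would be a shorter expression of $w$ — so $l(w)$ is at most the maximal length of a reduced word in $W'$, namely $\dim G/B$. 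Combining the three steps gives $\dim X=l(w)\le\dim G/B$.

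The part that makes the corollary more than formal is the second step: one needs that replacing a word by a reduced representative in $\ch(X)$ genuinely preserves the chain locus, and that a reduced word of $\ch(X)$ has chain locus of the expected dimension $l(w)$. Both are built into the construction of $\ch(X)$ relative to a \emph{general} point $y$, which is why general position of $y$ is essential. The remaining ingredients — chain connectedness of $X$ by the $\Gamma_i$, finiteness of $W'$ once $\cD(X)$ is finite, and the computation of the longest word of $W'$ in Lemma~\ref{lem:dimflag} — are already at our disposal.
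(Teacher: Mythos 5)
Your proposal is correct and follows essentially the paper's own route: the paper simply remarks that $\dim X$ equals the length of the longest element of $\ch(X)$ and combines this with the fact that $\ch(X)$ is a quotient of $W'(X)$ and with Lemma~\ref{lem:dimflag}. You have merely filled in the details the paper leaves implicit (the noetherian stabilization showing some word has chain locus all of $X$, and the length comparison between reduced words in $\ch(X)$ and $W'(X)$), so no new idea or different decomposition is involved.
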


\begin{proof}
It is enough to remark that $\dim X$ equals the length of the longest element in $\ch(X)$.
\end{proof}

%%%%%%%%%%%%%%%%%%%%
% FT-manifolds
% Section: An
%%%%%%%%%%%%%%%%%%%%

\section{CP Conjecture for FT-manifolds of type $A_n$}\label{sec:An}

Before starting with the case $A_n$, let us consider the following general situation. Given an FT-manifold $X$ of Picard number $n$, with Dynkin diagram $\cD(X)$, let us consider a semisimple Lie algebra $\fg$ with the same Dynkin diagram, and choose a Cartan decomposition for it, with root system $\Phi$ and simple roots $\alpha_i$, $i\in D=\{1,\dots,n\}$.
Given a subset $I\subsetneq D$, every fiber $Z_I$ of the corresponding contraction $\pi^I:X\to X^I$ is an FT-manifold with Dynkin diagram $\cD(X)_{D\setminus I}$. Assume that $Z_I$ is homogeneous. Then it is a quotient of a Lie group whose set of positive roots may be identified with $$\Phi^{I+}=\left\{\left.\sum_{i=1}^nr_i\alpha_i\in\Phi^+\right|r_i=0\mbox{ for }i\in I\right\}.$$
We may then use the filtration of $T_{Z^I}$ described in  Construction \ref{const:foliation}, that
provides a sequence $\Psi^I_1\subsetneq\dots\subsetneq\Psi^I_{\dim (Z_I)}={\Phi^I}^+$ of admissible sets allowing us to construct, recursively, vector subbundles $T_{\Psi^I_k}\subset T_{Z^I}$: at step $k>\sharp(D\setminus I)$ this bundle is the inverse image into $T_{Z^I}$ of the image of a morphism
$$
N_k:T_{\Psi^I_j}\otimes (T_\ell)_{|{Z^I}} \longrightarrow T_{Z^I}/T_{\Psi^I_{k-1}}, \mbox{ for some }j<k,\ell\in D\setminus I.
$$
Note also that, setting $\sum_{\beta\in\Phi^{I+}}\beta=:\sum_{i\in D\setminus I}m_i\alpha_i$, it follows that
$$
K_{Z^I}=\sum_{i\in D\setminus I}m_i{K_i}_{|Z^I}.
$$
The next statement tells us that the above construction on $Z_I$ can be made globally in $X$, and used to get numerical properties of the canonical divisor of $X^I$.

\begin{proposition}\label{prop:expint}
Let $X$ be an FT-manifold of Picard number $n$, $I\subsetneq D=\{1,\dots,n\}$, let $F=G/B$ be the complete flag manifold associated with the Dynkin diagram $\cD(X)$ and
assume that CP Conjecture holds for FT-manifolds with Dynkin diagram $\cD(X)_{D\setminus I}$.
Then, with the same notation as above:
 \begin{itemize}
   \item $K^I\equiv\sum_{i\in D\setminus I}m_i{K_i}$, and
   \item $K_{{X^I}}\cdot \pi^I_*(\Gamma_i)=K_{F(I)}\cdot p^I_*(C_i)\quad\mbox{for every }i\in D$.
 \end{itemize}
\end{proposition}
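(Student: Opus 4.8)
The plan is to establish the first bullet and to deduce the second one from it. The second equality is immediate for $i\in D\setminus I$, since then $\Gamma_i$ and $C_i$ are contracted by $\pi^I$ and $p^I$ respectively, so both sides vanish. For $i\in I$, writing $T^I$ for the relative tangent bundle of $\pi^I$ and using $\pi^{I*}K_{X^I}\equiv K_X-K^I$, the first bullet, and the equality $-K_X\cdot\Gamma_i=2$ (because the elementary contraction $\pi_i$ is a smooth $\P^1$-fibration and $\Gamma_i$ is one of its fibers), one computes
$$K_{X^I}\cdot\pi^I_*\Gamma_i=-2+\sum_{j\in D\setminus I}m_j\,(-K_j\cdot\Gamma_i)=-2+\sum_{j\in D\setminus I}m_j\,M(X)_{ji}.$$
The identical computation on $F$ gives $K_{F(I)}\cdot p^I_*C_i=-2+\sum_{j\in D\setminus I}m_j\,M(F)_{ji}$, and $M(X)=M(F)$ because both are the Cartan matrix of the Dynkin diagram $\cD(X)$, which is finite by Theorem \ref{thm:picn} and whose rank two submatrices are pinned down by Corollary \ref{cor:rank2}. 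So everything reduces to the identity $K^I\equiv\sum_{i\in D\setminus I}m_iK_i$ in $N^1(X)$.

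To prove it I would carry out Construction \ref{const:foliation} globally along $\pi^I$. By Proposition \ref{prop:fibers} and Remark \ref{rem:fibers2} every fiber $Z_I$ of $\pi^I$ is an FT-manifold with Dynkin diagram $\cD(X)_{D\setminus I}$, hence — by the assumption that CP Conjecture holds for that diagram — a complete flag manifold $G'/B'$ with positive roots $\Phi^{I+}$. Construction \ref{const:foliation} applied to $Z_I$ yields an ordering $\beta_1,\dots,\beta_{\dim Z_I}$ of $\Phi^{I+}$, admissible sets $\Psi^I_1\subsetneq\dots\subsetneq\Psi^I_{\dim Z_I}=\Phi^{I+}$ with $\Psi^I_k=\Psi^I_{k-1}\cup\{\beta_k\}$, and, for each $k>\sharp(D\setminus I)$, indices $j<k$ and $\ell\in D\setminus I$ with $\beta_k=\beta_j+\alpha_\ell$. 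I would then build recursively a filtration of $T^I$ (locally free, as $\pi^I$ is smooth by Proposition \ref{prop:nef}) by subbundles $\cT_{\Psi^I_k}\subseteq T^I$: for $k\le\sharp(D\setminus I)$ let $\cT_{\Psi^I_k}$ be the sum of the first $k$ of the line subbundles $T_\ell\subseteq T^I$, $\ell\in D\setminus I$ (these lie in $T^I$ since $\pi^I$, contracting the face containing $R_\ell$, factors through $\pi_\ell$), and for $k>\sharp(D\setminus I)$ let $\cT_{\Psi^I_k}\subseteq T^I$ be the preimage of the image of the O'Neill tensor
$$N_k\colon\cT_{\Psi^I_j}\otimes T_\ell\longrightarrow T^I/\cT_{\Psi^I_{k-1}}$$
induced by the bracket of $\pi^I$-vertical vector fields modulo $\cT_{\Psi^I_{k-1}}$ (well defined since $T^I$ is an involutive distribution and $\cT_{\Psi^I_j}+T_\ell\subseteq\cT_{\Psi^I_{k-1}}$).

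The key point is that the assumption forces \emph{every} fiber of $\pi^I$ to be homogeneous, so the restriction of the construction to each $Z_I$ is literally Construction \ref{const:foliation} for $Z_I$: $N_k$ vanishes on $\cT_{\Psi^I_{j-1}}\otimes T_\ell$ and restricts on each fiber to a monomorphism $(\cT_{\Psi^I_j}/\cT_{\Psi^I_{j-1}})\otimes T_\ell\hookrightarrow T^I/\cT_{\Psi^I_{k-1}}$ with image $G'\times_{B'}\fg_{\beta_k}$. Since a morphism of vector bundles on $X$ that vanishes (resp. is injective) on every fiber of $\pi^I$ vanishes (resp. is a subbundle inclusion) — both can be tested at each point and every point lies on a fiber — it follows inductively that the $\cT_{\Psi^I_k}$ are genuine subbundles with $\cT_{\Psi^I_k}/\cT_{\Psi^I_{k-1}}\cong(\cT_{\Psi^I_j}/\cT_{\Psi^I_{j-1}})\otimes T_\ell$, hence $c_1(\cT_{\Psi^I_k}/\cT_{\Psi^I_{k-1}})=\sum_{\ell}r_\ell\,c_1(T_\ell)$ whenever $\beta_k=\sum_\ell r_\ell\alpha_\ell$, mirroring $G\times_B\fg_{\beta_k}=\bigotimes_\ell(G\times_B\fg_{\alpha_\ell})^{\otimes r_\ell}$. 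As $\cT_{\Psi^I_{\dim Z_I}}$ has full rank in $T^I$ it equals $T^I$, so summing Chern classes along the filtration and using $\sum_{\beta\in\Phi^{I+}}\beta=\sum_{i\in D\setminus I}m_i\alpha_i$ gives $-K^I=c_1(T^I)=\sum_{i\in D\setminus I}m_i\,c_1(T_i)=\sum_{i\in D\setminus I}m_i(-K_i)$, as wanted.

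The main obstacle is the bookkeeping that makes this globalization rigorous: one must check that each step of the recursion produces a genuine subbundle of $T^I$ (not merely a subsheaf), that the inductive data ($\cT_{\Psi^I_{k-1}}$ a subbundle restricting to $T_{\Psi^I_{k-1}}$ on every fiber, the inclusions making $N_k$ well defined, the rank count) propagate, and that $N_k$ is $\cO_X$-linear and restricts fiberwise to the corresponding arrow of Construction \ref{const:foliation}. All of this rests on having homogeneity of \emph{all} fibers of $\pi^I$, which is precisely what converts the fiberwise vanishing and injectivity statements into pointwise — hence global — statements about morphisms of bundles on $X$.
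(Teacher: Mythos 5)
Your proof is correct and follows essentially the same route as the paper: the first bullet is obtained by globalizing Construction \ref{const:foliation} along $\pi^I$, using the homogeneity of all fibers and the integrability of $T^I$ to glue the fiberwise O'Neill tensors into global ones and deduce that the graded pieces of the filtration of $T^I$ are tensor combinations of the $T_\ell$'s, whose coefficients are read off on a fiber; the second bullet then follows, exactly as in the paper, by adjunction and the projection formula (your explicit Cartan-matrix computation just spells this out).
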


\begin{proof}
Let $Z^I$ be a fiber of $\pi^I$, that is, by hypothesis, isomorphic to the complete flag manifold associated to the Dynkin diagram $\cD(X)_{D\setminus I}$.

By induction, assume that, at step $k$, the vector bundles $T_{\Psi^I_{r}}$ are restrictions to $Z^I$ of vector subbundles $\cV_r\subset T_X$ in $X$ for every $r<k$. Since the relative tangent bundle $T^I$, whose leaves are the fibers $Z^I$, is integrable, it follows that the morphisms $N_k$ defined on every fiber $Z^I$ glue together into the O'Neill tensor $N_k$ defined globally by
$$
N_k:\cV_j\otimes T_\ell \longrightarrow T^I/{\cV_{k-1}}.
$$

By induction we may also assume that $\cV_j/\cV_{j-1}$ is a tensor product combination of the $T_r$'s. Since, following Construction \ref{const:foliation}, the image of $N_k$ is isomorphic to $\cV_j/\cV_{j-1}\otimes T_\ell$, we conclude that $\cV_{k}/\cV_{k-1}$ is a tensor product combination of the $T_r$'s. By construction, the coefficients of the combination can be computed by restricting to any fiber $Z^I$.

Summing up, the coefficients of ${K^I}\equiv -c_1\left(\bigoplus_k\cV_{k}/\cV_{k-1}\right)$ as a linear combination of the $K_i$'s, $i\notin I$ may  be computed by restricting to any $Z_I$. This proves the first assertion. The second is then obtained by intersecting with $\Gamma_i$ and $C_i$ on $X$ and $F$, respectively, and applying adjunction and projection formulas.
\end{proof}

Let $\cD$ be a connected finite Dynkin diagram, corresponding to a simple Lie group $G$. With the same notation as in Section \ref{sec:nothom}, we will denote by $N(i)\subset D$ the set of neighboring nodes of the node $i\in D$, i.e. the set of nodes $j$ linked to $i$ in $\cD$.

Consider now a subset $I\subset D$ and a node $i\in I$. Following \cite{LM}, we say that  $i$ is an {\it exposed short node} for $I$ if the connected component containing $i$ of the subdiagram of $\cD$ supported on $D\setminus(I\setminus\{i\})$ contains an arrow pointing towards $i$.

Given a subset $I\subset D$ and a node $i\in I$, we will now set:
$$ J:=I\setminus\{i\},\quad I':=I\cup N(i),\quad  J':=J\cup N(i)$$

Consider the line bundle $L \in {\rm Pic}(F(I)) \simeq H^2(F(I), \Z)$ corresponding to $\sum_{i \in  I}\lambda_i$. Then $L$ is very ample and the embedding $F(I) \subset \P^N$ defined by the complete linear system $|L|$ is called  {\it minimal homogeneous}. With respect to this embedding, the coroots $\check{\alpha}_i \in H_2(F(I), \Z)$ are lines on $F(I)$ and \cite[Theorem~4.3]{LM} can be reformulated as follows:

\begin{theorem}[{\cite[Theorem~4.3]{LM}}]\label{thm:linesonflags} Let $X\subset \P^N$ be the minimal homogeneous embedding of $X:=F(I)$, and assume that $i$ is not an exposed short node for $I$. Then:
\begin{enumerate}
\item  the space of lines of class $\check{\alpha}_i \in H_2(F(I), \Z)$ is isomorphic to $F(J')$;
\item the following natural double fibration gives the universal family of lines of class $\check{\alpha}_i $:
\[\xymatrix@=20pt{
& F(I') \ar[dl]_{}  \ar[dr]^{}  &  \\
F(I)&& F(J')  . \\
} \]
\end{enumerate}
\end{theorem}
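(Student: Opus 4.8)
The plan is to obtain the statement as a direct reformulation of \cite[Theorem~4.3]{LM}, whose content is precisely the description of the family of lines on a minimal homogeneous embedding; the work consists in translating between the formulation there and the language of coroots and marked diagrams used here.

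First I would check that, for the embedding $F(I)\subset\P^N$ defined by $L=\sum_{i\in I}\lambda_i$, the coroot $\check\alpha_i$ is the class of a line for every $i\in I$: it is the class of a fibre of the elementary contraction $p^{I,I\setminus\{i\}}$, hence effective, irreducible and rational by Proposition~\ref{prop:simp}, and $L\cdot\check\alpha_i=\langle\lambda_i,\check\alpha_i\rangle=1$, so it has degree one. Next, since $\langle\lambda_j,\check\alpha_i\rangle=0$ for $j\in I\setminus\{i\}$, every line of class $\check\alpha_i$ is contracted by $p^{I,I\setminus\{i\}}:F(I)\to F(I\setminus\{i\})$ and so lies in a single fibre of this map; by Proposition~\ref{prop:simp} such a fibre is the flag manifold of the subdiagram of $\cD$ supported on $D\setminus(I\setminus\{i\})$ with the node $i$ marked, which (stripping off the components not containing $i$) is a minimally embedded $G'/P'_i$. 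Inside it the classical case $\sharp I=1$ of the theorem identifies the variety of lines of class $\check\alpha_i$ with the flag manifold obtained by marking the neighbours of $i$; globalising this over $F(I\setminus\{i\})$ and noting that the resulting incidence correspondence is again a contraction of flag manifolds, a short bookkeeping check with the marked diagrams recovers the double fibration $F(I)\leftarrow F(I')\rightarrow F(J')$, with $I'=I\cup N(i)$ and $J'=(I\setminus\{i\})\cup N(i)$, and the identification of the space of lines with $F(J')$.

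Finally I would match the hypothesis: the condition that $i$ is not an exposed short node for $I$ is exactly the one in \cite[Theorem~4.3]{LM} ensuring that the lines of class $\check\alpha_i$ form an irreducible family with the stated structure --- in the excluded case an arrow pointing to $i$ in the relevant component forces a curve of this class to degenerate or to acquire a different class. I do not expect any genuinely new geometric input beyond \cite{LM}; the only delicate point is keeping the combinatorics of the four subsets $I,J,I',J'$ and of the coroot class straight while passing between the two formulations.
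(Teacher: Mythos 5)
Your proposal is essentially what the paper does: the paper gives no independent proof, but simply observes that under the minimal homogeneous embedding the coroots $\check\alpha_i$ are line classes and then quotes \cite[Theorem~4.3]{LM}, exactly the translation you carry out (your fibrewise reduction over $F(I\setminus\{i\})$ is just a more explicit way of doing the marked-diagram bookkeeping already contained in \cite{LM}). So the attempt is correct and takes the same route, namely reformulating the cited result rather than supplying new geometric input.
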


The following Proposition shows how to use Theorem \ref{thm:linesonflags} to reconstruct
inductively an FT-manifold  using family of lines:

\begin{proposition}\label{prop:onestep}
Let $X$ be an FT-manifold with Picard number $n$ and connected Dynkin diagram $\cD(X)$. Assume that $i\in I\subset D$ satisfy that:
\begin{enumerate}
\item $i$ is not an exposed short node for $I$;
\item $\sharp(I')=\sharp(I)+1 \ge 3$.
\end{enumerate}
Assume furthermore that CP Conjecture holds for FT-manifolds whose Dynkin diagram
is a subdiagram of $\cD(X)$.
Then, if $X^{I}\cong F(I)$ it follows that $X^{I'}\cong F(I')$.
\end{proposition}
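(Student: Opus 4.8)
The plan is to recognise $X^{I'}$, together with its contraction onto $X^{I}\cong F(I)$, as the universal family of lines of class $\check\alpha_{i}$ on $F(I)$, and then to invoke Theorem \ref{thm:linesonflags}.

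\emph{Setup.} Since $\sharp(I')=\sharp(I)+1$ there is a unique node $j\in N(i)\setminus I$, so that $I'=I\sqcup\{j\}$ and $J':=I'\setminus\{i\}=(I\setminus\{i\})\cup N(i)$. The manifold $X^{I'}$ has two elementary contractions, $q\colon X^{I'}\to X^{I}$ forgetting the node $j$ and $q'\colon X^{I'}\to X^{J'}$ forgetting the node $i$; both are smooth by Proposition \ref{prop:nef}. As $N(i)\subseteq I'$, the node $i$ is isolated in $\cD(X)_{(D\setminus I')\cup\{i\}}$, so $q'$ is a smooth $\P^{1}$-fibration. By Remark \ref{rem:fibers2} a fibre $Z$ of $\pi^{I}$ is an FT-manifold with Dynkin diagram $\cD(X)_{D\setminus I}$, a subdiagram of $\cD(X)$; by the standing hypothesis $Z$ is homogeneous, hence it is the complete flag manifold of $\cD(X)_{D\setminus I}$, and the fibre $Y_{0}$ of $q$ is the rational homogeneous space attached to $\cD(X)_{D\setminus I}$ marked at $\{j\}$ --- precisely the fibre of the analogous contraction $e\colon F(I')\to F(I)$. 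In particular $\dim X^{I'}=\dim F(I)+\dim Y_{0}=\dim F(I')$ and, by Proposition \ref{prop:nef}(3), $\rho_{X^{I'}}=\sharp(I')=\rho_{F(I')}$.

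\emph{Lines.} Embed $X^{I}=F(I)\hookrightarrow\P^{N}$ by the minimal homogeneous embedding $|L|$, $L=\sum_{k\in I}\lambda_{k}$. Since $i$ is not an exposed short node for $I$, Theorem \ref{thm:linesonflags} identifies $F(J')$ with the space of lines of class $\check\alpha_{i}$ on $F(I)$ and $F(I')$ with their universal family, through $F(I)\xleftarrow{e}F(I')\xrightarrow{\tilde e}F(J')$. Let $C$ be a fibre of $q'$; then $[C]$ is the class of a minimal curve $\Gamma_{i}$ of the $i$-th elementary contraction of $X^{I'}$, and since $q$ forgets the node $j\neq i$ and $q^{*}$ identifies $\Pic(F(I))$ with the sublattice of $\Pic(X^{I'})$ orthogonal to $[\Gamma_{j}]$, one checks that $q_{*}[C]=\check\alpha_{i}$. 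Hence $L\cdot q_{*}[C]=\langle\lambda_{i},\check\alpha_{i}\rangle=1$, so $q(C)$ is a line of class $\check\alpha_{i}$ in $\P^{N}$ and $q|_{C}\colon C\xrightarrow{\sim}q(C)$.

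\emph{The classifying morphism.} The family $\{q((q')^{-1}(y))\}_{y\in X^{J'}}$ is a flat family of lines of class $\check\alpha_{i}$ on $F(I)$, hence defines a morphism $\mu\colon X^{J'}\to F(J')$; pulling back the universal family along $\mu$ and using that $q$ restricts to an isomorphism on the fibres of $q'$ we get $\psi\colon X^{I'}\to F(I')$ with $e\circ\psi=q$, restricting to an isomorphism on every fibre of $q'$. It suffices to show $\psi$ is birational: then, being a birational morphism between smooth projective varieties with $\rho_{X^{I'}}=\rho_{F(I')}$, it contracts no divisor (the contracted face is $(\rho_{X^{I'}}-\rho_{F(I')})$-dimensional, i.e.\ zero), hence it is finite, hence an isomorphism, and $X^{I'}\cong F(I')$ as wanted. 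Since $\dim X^{J'}=\dim F(J')$, birationality of $\psi$ is equivalent to generic injectivity of $\mu$.

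\emph{The main obstacle} is this last point: the differential $d\mu$ at $[C]$ fits in the cohomology sequence of $0\to T_{X^{I'}/X^{I}}|_{C}\to N_{C/X^{I'}}\to N_{q(C)/F(I)}\to 0$, where $N_{C/X^{I'}}$ is trivial (a fibre of a smooth $\P^{1}$-fibration), $N_{q(C)/F(I)}\cong\cO(1)^{\oplus\dim Y_{0}}\oplus\cO^{\oplus(\dim F(I)-1-\dim Y_{0})}$ is the standard normal bundle of a line on the homogeneous manifold $F(I)$, and $T_{X^{I'}/X^{I}}|_{C}$ is a subbundle of $N_{C/X^{I'}}$ (since $T_{X^{I'}/X^{I}}\cap T_{X^{I'}/X^{J'}}=0$, $q\times q'$ being finite) of rank $\dim Y_{0}$ and degree $-\dim Y_{0}$ (compare $(-K_{X^{I'}})\cdot C=2$ with $(-K_{F(I)})\cdot\check\alpha_{i}=2+\dim Y_{0}$). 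Thus $d\mu_{[C]}$ is injective exactly when $H^{0}(C,T_{X^{I'}/X^{I}}|_{C})=0$, i.e.\ when $T_{X^{I'}/X^{I}}|_{C}\cong\cO(-1)^{\oplus\dim Y_{0}}$; equivalently, no infinitesimal deformation of the fibre $C$ of $q'$ leaves the line $q(C)$ fixed. This vanishing is where the nefness of $T_{X^{I'}}$ (which forces $T_{X^{I'}}|_{C}\cong\cO(2)\oplus\cO^{\oplus(\dim X^{J'})}$) together with the numerical identities of Proposition \ref{prop:expint} must be brought in, to make the quotient map $N_{C/X^{I'}}\twoheadrightarrow N_{q(C)/F(I)}$ balanced. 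Granting it, $\mu$ is unramified, hence --- $F(J')$ being simply connected and of the same dimension as $X^{J'}$ --- an isomorphism, and the proof concludes as above.
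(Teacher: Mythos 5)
Your construction of the lines of class $\check{\alpha}_i$ on $X^{I}\cong F(I)$ and of the classifying maps $\mu\colon X^{J'}\to F(J')$, $\psi\colon X^{I'}\to F(I')$ follows the same opening moves as the paper (Proposition \ref{prop:expint} to pin down the anticanonical degree, Theorem \ref{thm:linesonflags} plus the universal property to get the comparison maps). But the argument is incomplete at precisely the point you label ``the main obstacle'': the vanishing $H^0(C,T_{X^{I'}/X^{I}}|_C)=0$, i.e.\ the splitting $T_{X^{I'}/X^{I}}|_C\cong\cO_{\P^1}(-1)^{\oplus\dim Y_0}$, is never proved --- you write ``granting it''. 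This is a genuine statement that needs an argument: a subbundle of the trivial bundle $N_{C/X^{I'}}$ of rank $r=\dim Y_0$ and degree $-r$ may a priori have splitting type $\cO\oplus\cO(-2)\oplus\cO(-1)^{\oplus(r-2)}$ (etc.), in which case $d\mu$ drops rank; the nefness of $T_{X^{I'}}$ constrains only $T_{X^{I'}}|_C$, not the position of the relative tangent subsheaf inside it, and Proposition \ref{prop:expint} gives only the total degree. Moreover, your endgame (proper $+$ unramified $+$ simply connected target $\Rightarrow$ isomorphism) requires the vanishing at \emph{every} fibre $C$ of $q'$, not a general one. A smaller soft spot of the same kind: ``one checks that $q_*[C]=\check{\alpha}_i$'' should be justified as in the paper, by noting that $q(C)$ lies in a Picard-number-one fibre of $\pi^{I,J}$ and then using the degree identity of Proposition \ref{prop:expint}.

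The paper closes the argument without any normal-bundle computation: having built $\tilde h\colon X^{I'}\to F(I')$ and $h\colon X^{J'}\to F(J')$ exactly as you do, it first shows $h$ (hence $\tilde h$) is finite by contradiction --- a curve contracted by $h$ would lie in a fibre $Z^{J',J}$, which has Picard number one, so $h$ would be constant on it, forcing $\tilde h$ to map the positive-dimensional $Z^{I',J}$ into a one-dimensional fibre of $p^{I',J'}$, impossible since the component of $i$ in $\cD(X)_{D\setminus J}$ has at least two nodes --- and then applies Lau's rigidity theorem \cite{Lau} to the restrictions of $\tilde h$ to the fibres $Z^{I',J}$, which by hypothesis (2) and the standing induction are rational homogeneous of Picard number at least two; those restrictions are isomorphisms, so $\tilde h$ is bijective, hence an isomorphism. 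To salvage your route you would either have to prove the claimed splitting of $T_{X^{I'}/X^{I}}|_C$ directly (which appears to need rigidity input of comparable strength) or substitute the unramifiedness step by such a theorem, at which point you have essentially reproduced the paper's proof.
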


\begin{proof}
By induction, for every $\emptyset \not = \bar J \subset \bar I \subset D$ the fibers of $\pi^{\bar I, \bar J}:X^{\bar I} \to X^{\bar J}$ are isomorphic to the fibers of $p^{\bar I, \bar J}:F(\bar I) \to F(\bar J)$. This, together with the assumption $X^{I}\cong F(I)$, implies that $\dim X = \dim F(D)$, and, in turn that $\dim X^{\bar I} = \dim F(\bar I)$ for every $\bar I \subset D$.

In particular the fibers of $\pi^{I,J}:X^{I} \to X^{J}$ are isomorphic to the fibers of
$p^{I,J}:F(I) \to F(J)$, which have Picard number one. The curves $\pi^{I}_{\ast}(\Gamma_i)$
are contained in these fibers and, by Proposition \ref{prop:expint}, we see that $(-K_{X^{I}}) \cdot \pi^{I}_{\ast}(\Gamma_i)=(-K_{F(I)}) \cdot p^{I}_{\ast}(C_i)$.
This implies that these curves are lines of class $\check{\alpha}_i$ on $X^{I}$.
Thus, $\pi^{I',J'}:X^{I'} \to X^{J'}$ can be regarded as a family of lines
of class $\check{\alpha}_i$ on $X^{I}$.
By the universal property of the Hilbert scheme, this family is obtained by a base change from the universal
family of lines of class $\check{\alpha}_i$ on $X^{I}$,
hence, by Theorem \ref{thm:linesonflags}, we have the following commutative diagram:

\[\xymatrix@=35pt{
X^{I'}  {\ar@/^1pc/[rr]^{\pi^{I'\!,I}} }  \ar[r]_(.45){\tilde{h}} \ar[d]_{\pi^{I'\!,J'}}& F(I') \ar[d]_{p^{I'\!,J'}}  \ar[r]_{p^{I'\!,I}}  &X^{I}  \ar[d]^{\pi^{I,J}}  \\
X^{J'}  {\ar@/_1pc/[rr]_{\pi^{J'\!,J}} } \ar[r]^(.45)h  & F(J') \ar[r]^{p^{J'\!,J}}& {X^{J}}  \\
} \]

We claim that $h$ (and hence $\tilde h$) is a finite surjective map; by dimensional reasons it is enough to show that $h$ is finite. Assume by contradiction that there exists  a curve $C \subset X^{J'}$ which
is contracted by $h$. By the commutativity of the diagram $C$ is contained in a fiber of $\pi^{J',J}$.
Denote by $x$ the point $\pi^{J',J}(C)$ and consider the restricted diagram
\[\xymatrix@=35pt{
Z^{I'\!,J}  \ar[r]^(.45){\tilde{h}} \ar[d]_{\pi^{I'\!,J'}}& Z^{I'\!,J}  \ar[d]^{p^{I'\!,J'}}  \ar[r]^{ p^{I'\!,I}}  & Z^{I,J}  \ar[d]^{\pi^{I,J}}  \\
Z^{J'\!,J}   \ar[r]^(.45)h  & Z^{J'\!,J}   \ar[r]^{p^{J'\!,J}}& x \\
} \]

By assumption (2) $Z^{J'\!,J}$ has Picard number one, hence, $h|_{Z^{J'\!,J} }$ is constant.
Then $\tilde h(Z^{I'\!,J})$ is contained in a fiber of $p^{I'\!,J'}$, which has dimension one.
But, since the connected component containing $i$ of the subdiagram of $\cD(X)$ supported on $D\setminus J$ has at least two nodes,  $\dim Z^{I,J} > 1$, a contradiction which proves that  $\tilde h$ is finite and surjective.

By \cite[Main Theorem]{Lau}, for any fiber $Z^{I'\!,J}$ of  $\pi^{I'\!,J}$ the restriction ${\tilde h}|_{Z^{I'\!,J} }$ is an isomorphism (by assumption (2) the Picard number of $Z^{I'\!,J} $ is at least two). It follows that $\tilde h$ is bijective, hence an isomorphism.
\end{proof}

In the next proposition we will number the nodes of every connected finite Dynkin
as in \cite[p.~58]{H}.

\begin{proposition}\label{prop:recurs}
Let $X$ be an FT-manifold with connected Dynkin diagram $\cD(X)$, and consider, for every possible type of finite connected Dynkin diagram the subset $I\subset D$
 provided in the following table:
 \renewcommand{\arraystretch}{1.30}
\begin{center}
\vspace{0.2cm}
\begin{tabular}{|c||c|c|c|c|c|c|}
\hline
$\cD$&$A_n$&$B_n$&$C_n$&$D_n$&$E_n$&$F_4$\\\hline\hline
$I$&$\{1,n\}$&$\{1,n\}$&$\{1,n\}$&$\{1,n\}$&$\{2,n\}$&$\{1,2\}$\\\hline
\end{tabular}
\vspace{0.2cm}
\end{center}
Assume furthermore that  CP Conjecture holds for FT-manifolds whose Dynkin diagram
is a subdiagram of $\cD(X)$. Then, if $X^{I}\cong F(I)$, it follows that $X\cong F(D)$.
\end{proposition}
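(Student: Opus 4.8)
The plan is to bootstrap the single given isomorphism $X^I\cong F(I)$ up to $X=X^D\cong F(D)=G/B$ by repeated use of Proposition \ref{prop:onestep}, adjoining one node of $\cD(X)$ at a time. Concretely, I will exhibit, for each type of connected finite Dynkin diagram with the numbering of \cite[p.~58]{H}, a chain
$$I=I_0\subsetneq I_1\subsetneq\dots\subsetneq I_k=D$$
together with pivot nodes $i_t\in I_t$ such that $I_{t+1}=I_t\cup N(i_t)$ and the two hypotheses of Proposition \ref{prop:onestep} hold at every stage: $i_t$ is not an exposed short node for $I_t$, and $\sharp(I_{t+1})=\sharp(I_t)+1\ge 3$. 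The inequality $\sharp(I_{t+1})\ge 3$ is automatic, since $\sharp(I_0)=2$ and the sets only grow; the equality $\sharp(I_{t+1})=\sharp(I_t)+1$ forces $i_t$ to have exactly one neighbour outside $I_t$, which is the only genuine restriction on the chain. Granting this, Proposition \ref{prop:onestep}, whose third hypothesis is precisely the one assumed here, upgrades $X^{I_0}=X^I\cong F(I)$ successively to $X^{I_1}\cong F(I_1),\dots,X^{I_k}=X^D=X\cong F(D)=G/B$, which is the assertion.

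For the classical types we absorb nodes one at a time along a path. In $A_n$ we start from $I_0=\{1,n\}$ and let the pivots run through $1,2,\dots,n-2$, so that $\{1,\dots,s,n\}$ becomes $\{1,\dots,s+1,n\}$ at the step with pivot $s$; since $A_n$ has no multiple edges the exposed-short condition is vacuous, exactly one node is adjoined each time, and the chain ends at $D$. The same chain $\{1,n\}\subset\{1,2,n\}\subset\dots\subset D$ with pivots $1,2,\dots,n-2$ works for $B_n$ and $C_n$: when the pivot is $s$ the current set is $\{1,\dots,s,n\}$, so the diagram supported on $D$ minus its elements other than $s$ is the single-bonded path $\{s,s+1,\dots,n-1\}$ (the node $n$ carrying the double edge having been deleted), and hence no pivot is an exposed short node. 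Here one must enlarge from the node $1$ and not from $n$, because in $B_n$ the short node $n$ is an exposed short node for $\{1,n\}$ and the mirror choice is blocked. For $D_n$ we again start from $\{1,n\}$, pivot on $1,2,\dots,n-3$ to reach $\{1,\dots,n-2,n\}$ (no pivot touches the fork), and then pivot on the trivalent node $n-2$: two of its three neighbours, $n-3$ and $n$, already lie in the set, so exactly the remaining neighbour $n-1$ is adjoined and we arrive at $D$. As $D_n$ has no multiple edges nothing else need be verified.

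For the exceptional types the same idea applies, but the arms must be filled in the right order so as never to pivot past a branch node prematurely. In $F_4$, with diagram $1-2\Rightarrow 3-4$, we start from $\{1,2\}$, pivot on $2$ to adjoin $3$ — the relevant subdiagram $D\setminus\{1\}=\{2,3,4\}$ carries the arrow pointing towards $3$, not towards $2$, so $2$ is not exposed short — and then pivot on $3$ to adjoin $4$, the subdiagram $D\setminus\{1,2\}=\{3,4\}$ being arrow-free. In $E_n$ ($n=6,7,8$) the trivalent node is $4$, joined to $2$ directly, to $1$ through $3$, and to $n$ along the long arm $5,6,\dots,n$; we start from $\{2,n\}$, pivot on $2$ to adjoin $4$, then pivot on $n,n-1,\dots,6$ in turn to adjoin $n-1,n-2,\dots,5$ along the long arm, after which two neighbours of $4$, namely $2$ and $5$, lie in the set, so a pivot on $4$ adjoins exactly $3$ and a final pivot on $3$ adjoins $1$, giving $D$. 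Since $E_n$ has no multiple edges, the exposed-short condition never intervenes. (The table omits $G_2$ because there $\rho_X=2$, a case already covered by Theorem \ref{thm:pic2}; and whenever the tabulated $I$ already equals $D$ there is nothing to prove.)

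The delicate point — the one requiring care rather than new ideas — is exactly the bookkeeping of the previous two paragraphs: over the $\rho_X-2$ steps one must simultaneously keep each pivot from being an exposed short node (the real obstruction being node $n$ of $B_n$, which is why the enlargement must start at node $1$; a similar caution governs the pivot on node $3$ of $F_4$) and keep each pivot with a unique neighbour outside the current set, which forces all but one arm of every branching node to be absorbed before one pushes past it. Checking the explicit chains above against these two constraints is a finite, diagram-by-diagram inspection, after which the repeated application of Proposition \ref{prop:onestep} gives $X\cong F(D)$.
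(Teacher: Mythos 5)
Your proposal is correct and follows exactly the paper's strategy: the paper's proof also reduces the statement to exhibiting, diagram by diagram, a chain of subsets with pivot nodes satisfying the hypotheses of Proposition \ref{prop:onestep}, declares the existence of such chains an easy exercise, and writes out only the $A_n$ case (which coincides with yours). Your explicit chains for the remaining types, including the check that no pivot is an exposed short node in $B_n$, $C_n$ and $F_4$, correctly carry out that exercise.
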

\begin{proof}
It is enough to show that there exists a sequence $\{(I_k,i_k),\,\,k=1,\dots,r \}$, $I_k\subset D$, $i_k\in I_k$, with $I_1=I$, satisfying that
\begin{itemize}
\item $i_k\in I_k\subset D$ satisfy the hypotheses of Proposition \ref{prop:onestep},
\item $I_{k+1}=I_{k}\cup\{i_{k+1}\}=I_{k}\cup N(i_k)$, and
\item $I_r=D$.
\end{itemize}
The existence of this sequence is an easy exercise. For instance, in case $A_n$ the sequence is given by $I_k=\{1,\dots,k,n\}, i_k=k$. \end{proof}

Finally, the next statement, together with the previous proposition, allows us to prove Theorem \ref{thm:An}.

\begin{proposition}\label{prop:step1An}
Let $X$ be an FT-manifold with Dynkin diagram $A_n$, with $n \ge 2$. Then $X^{\{1,n\}}\cong \P(T_{\P^n})\cong F(\{1,n\})$.
\end{proposition}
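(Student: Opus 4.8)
The plan is to proceed by induction on $n$; as part of the inductive proof of Theorem~\ref{thm:An} one may assume that CP Conjecture holds for FT-manifolds whose Dynkin diagram is a proper subdiagram of $\cD(X)=A_n$. In particular, by Remark~\ref{rem:fibers2}, every fiber of $\pi^{\{1\}}$ (resp.\ $\pi^{\{n\}}$), being an FT-manifold with diagram $\cD(X)_{\{2,\dots,n\}}$ (resp.\ $\cD(X)_{\{1,\dots,n-1\}}$) of type $A_{n-1}$, is a complete flag manifold of type $A_{n-1}$. The case $n=2$ is immediate: then $\pi^{\{1,2\}}$ is the identity and $X\cong F(\{1,2\})$ by Theorem~\ref{thm:pic2}, while $F(\{1,2\})$ of type $A_2$ is $\P(T_{\P^2})$. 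So assume $n\ge 3$. By Proposition~\ref{prop:nef}, $X^{\{1,n\}}$ is a Fano manifold with nef tangent bundle; it has Picard number two, with smooth elementary contractions $\varphi\colon X^{\{1,n\}}\to X^{\{1\}}$ contracting the image of $R_n$ and $\psi\colon X^{\{1,n\}}\to X^{\{n\}}$ contracting the image of $R_1$, and $\pi^{\{1\}}=\varphi\circ\pi^{\{1,n\}}$, $\pi^{\{n\}}=\psi\circ\pi^{\{1,n\}}$. A fiber of $\varphi$ is $\pi^{\{1,n\}}(Z)$ for $Z$ a fiber of $\pi^{\{1\}}$, and $\pi^{\{1,n\}}|_Z$ is the contraction of the complete flag manifold $Z$ onto the projective space attached to an end node of its $A_{n-1}$ diagram; hence every fiber of $\varphi$, and symmetrically of $\psi$, is isomorphic to $\P^{n-1}$.

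Next I would identify $X^{\{1\}}$ (and, by the symmetry of $A_n$, also $X^{\{n\}}$) with $\P^n$. Since $T_X$ is nef the deformations of $\Gamma_1$ cover $X$, so their $\pi^{\{1\}}$-images form a covering family of rational curves on $X^{\{1\}}$. Applying Proposition~\ref{prop:expint} with $I=\{1\}$ --- legitimate, as CP holds for $\cD(X)_{D\setminus\{1\}}=A_{n-1}$ --- and comparing with the flag manifold $F=G/B$ of type $A_n$, one gets $-K_{X^{\{1\}}}\cdot\pi^{\{1\}}_*(\Gamma_1)=-K_{F(\{1\})}\cdot p^{\{1\}}_*(C_1)=n+1$, since $F(\{1\})=\P^n$ and $p^{\{1\}}_*(C_1)$ is a line. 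On the other hand $\dim X^{\{1\}}=\dim X-\dim Z\le\binom{n+1}{2}-\binom{n}{2}=n$ by Corollary~\ref{cor:expecteddim}. Thus $X^{\{1\}}$ is a Fano manifold of Picard number one with a covering family of rational curves of anticanonical degree at least $\dim X^{\{1\}}+1$, hence $X^{\{1\}}\cong\P^n$ (and $\dim X^{\{1\}}=n$) by the Cho--Miyaoka--Shepherd-Barron characterization of projective space.

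Finally I would recognize $X^{\{1,n\}}$ inside $\P^n\times\P^n$. Let $f\cong\P^{n-1}$ be a fiber of $\psi$, with inclusion $j\colon f\hookrightarrow X^{\{1,n\}}$. By Proposition~\ref{prop:nef}(3) applied to $\psi$, $j_*N_1(f)$ is the line spanned by the class contracted by $\psi$, which is not the one contracted by $\varphi$; so $\varphi|_f$ is finite and $\varphi|_f^*\cO_{\P^n}(1)=\cO_{\P^{n-1}}(d)$ with $d\ge 1$. Writing $\pi^{\{1,n\}}_*(\Gamma_1)=\mu\,j_*[\ell]$, where $[\ell]$ is the line class of $f$ and $\mu\ge 1$ is an integer, and using $\varphi_*\pi^{\{1,n\}}_*(\Gamma_1)=\pi^{\{1\}}_*(\Gamma_1)$, a line of $X^{\{1\}}\cong\P^n$, gives $\mu d=\cO_{\P^n}(1)\cdot\pi^{\{1\}}_*(\Gamma_1)=1$, so $d=1$ and $\varphi|_f$ embeds $f$ linearly onto a hyperplane of $X^{\{1\}}$; symmetrically, $\psi$ restricts to a linear embedding of each fiber of $\varphi$ onto a hyperplane of $X^{\{n\}}$. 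Hence the fibers of $(\varphi,\psi)\colon X^{\{1,n\}}\to X^{\{1\}}\times X^{\{n\}}\cong\P^n\times\P^n$ are empty or reduced points, so $(\varphi,\psi)$ is a closed embedding onto a divisor meeting each ruling of $\P^n\times\P^n$ in a hyperplane, i.e.\ onto a smooth divisor of bidegree $(1,1)$; every such divisor is projectively equivalent to $\{\sum_i x_iy_i=0\}=F(\{1,n\})\cong\P(T_{\P^n})$, which completes the proof. The step I expect to be the \emph{main obstacle} is the identification $X^{\{1\}}\cong\P^n$, where the numerical input of Proposition~\ref{prop:expint}, the dimension bound of Corollary~\ref{cor:expecteddim} and the Cho--Miyaoka--Shepherd-Barron criterion must be brought together; once it is in hand, recognizing the configuration of the two $\P^{n-1}$-fibrations as the incidence variety is comparatively formal.
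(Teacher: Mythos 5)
Your route is genuinely different from the paper's: the paper never identifies $X^{\{1\}}$ or $X^{\{n\}}$ separately, but notes that $\pi^{\{1,n\},1}$ and $\pi^{\{1,n\},n}$ are $\P^{n-1}$-fibrations, uses Corollary \ref{cor:expecteddim} to bound $\dim X^{\{1,n\}}\le 2n-1$, excludes the case $\dim X^1+\dim X^n=\dim X^{\{1,n\}}$ via \cite[Section 5]{OW} together with Proposition \ref{prop:expint}, and then gets $X^{\{1,n\}}\cong\P(T_{\P^n})$ in one stroke from \cite[Theorem 2]{OW}. Your alternative (first $X^{\{1\}}\cong X^{\{n\}}\cong\P^n$, then realize $X^{\{1,n\}}$ as a smooth $(1,1)$-divisor in $\P^n\times\P^n$) would be a nice argument, and your final paragraph is essentially correct once those identifications are in hand; but the step you yourself flag as the main obstacle has a genuine gap as written.

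The gap is in the Cho--Miyaoka--Shepherd-Barron step. First, CMSB needs a covering family of honest rational curves on $X^{\{1\}}$ each of anticanonical degree at least $\dim X^{\{1\}}+1$. Proposition \ref{prop:expint} only computes the degree of the pushforward cycle: $-K_{X^{\{1\}}}\cdot\pi^{\{1\}}_*(\Gamma_1)=n+1$. If the restriction of $\pi^{\{1\}}$ to a curve $\Gamma_1$ is a $k$-fold cover of its image with $k\ge 2$, the members of your covering family are the image curves, of anticanonical degree $(n+1)/k$; since $\dim X^{\{1\}}\ge n-1$ (by the finiteness of $\varphi$ on the $\P^{n-1}$-fibers of $\psi$ that you use later), any $k\ge 2$ already violates the CMSB hypothesis for $n\ge 2$, so you must rule out such multiple covers, and nothing in the proposal does. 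Second, even granting CMSB, the conclusion is $X^{\{1\}}\cong\P^{d}$ with $d=\dim X^{\{1\}}$; the parenthetical ``$\dim X^{\{1\}}=n$'' does not follow, because Corollary \ref{cor:expecteddim} gives only $d\le n$. This second point is repairable with tools you already deploy: $d\ge n-1$ as above, and $(d+1)\mid(n+1)$ because $H\cdot\pi^{\{1\}}_*(\Gamma_1)=(n+1)/(d+1)$ is a positive integer, which forces $d=n$. The multiple-cover issue, however, remains an unverified hypothesis of your key citation, so the identification $X^{\{1\}}\cong\P^n$ --- and with it the whole $(1,1)$-divisor endgame --- is not yet established; the paper's appeal to \cite[Theorem 2]{OW} sidesteps both difficulties, since it only requires the two $\P^{n-1}$-fibration structures and the dimension count.
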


\begin{proof}
We proceed by induction on $n$. If $n=2$, then our claim follows from Theorem \ref{thm:pic2}.
Hence we may assume that $n \geq 3$ and that the statement holds for FT-manifolds of type $A_k$ if $k < n$.

By Remark \ref{rem:fibers2} together with our induction hypothesis, for every proper subset $I$ the fibers $Z_I$ of the contraction $\pi_I:X \to X_I$ are complete flag manifolds with Dynkin diagram $\cD(Z_I)=\cD(X)_I$; the connected components of
$\cD(Z_I)$ are clearly of type $A_k$ with $k<n$.
This implies that, for every $ \emptyset \neq I \subset J \subset \{1, \dots, n\}$, the fibers of the contraction %$\pi_{I,J}: X_I \rightarrow X_{J}$ and
$\pi^{J,I}: X^J\rightarrow X^{I}$ are isomorphic to the fibers of the contraction of rational homogeneous spaces $F(J)\rightarrow F(I)$.

In particular,
$\pi^{\{1,n\},1}: X^{\{1,n\}} \rightarrow X^1$ and $\pi^{\{1,n\},n}: X^{\{1,n\}} \rightarrow X^n$ are $\P^{n-1}$-fibrations.  Since $\dim X^{\{1,n\}}\leq 2n-1$ by Corollary \ref{cor:expecteddim}, we obtain:
\begin{eqnarray} \nonumber
\dim {X^{1}} + \dim {X^{n}} &\geq& \dim X^{\{1, n\}} \\ \nonumber
&=&(\dim X^1+\dim X^n) + 2(n-1)-\dim X^{\{1,n\}} \\ \nonumber
&\geq& (\dim X^1+\dim X^n) -1. \nonumber
\end{eqnarray}
Moreover, if $\dim X^1 + \dim X^n = \dim X^{\{1, n\}}$, then $X^1$ and $X^n$ are isomorphic to $\P^{n-1}$ (see \cite[Section 5]{OW}). Then $-K_{X^1}\cdot\pi^1_*(\Gamma_1)\neq n+1$, contradicting Proposition \ref{prop:expint}. Thus we have
\begin{eqnarray} \nonumber
\dim X^1 + \dim X^n = \dim X^{\{1, n\}} +1.
\end{eqnarray}
By \cite[Theorem~2]{OW}, we find that $X^{\{1, n\}}=\P(T_{\P^n})$ and $\pi^{\{1,n\},1}:X^{\{1, n\}} \rightarrow X^1=\P^n$, $\pi^{\{1,n\},n}:X^{\{1, n\}}\rightarrow X^n =\P^n$ are the canonical projections.  It is important to remark that we are allowed to use \cite{OW} because the term ``$\P^r$-bundle'' in that paper refers to what in this paper is called  $\P^r$-fibration.
\end{proof}

%%%%%%%%%%%%%%%%%
%
%Section:Bibliography
%%%%%%%%%%%%%%%%%

\end{document}